\documentclass{article}%

\usepackage{amsmath,amssymb,amsfonts}%
\usepackage{theorem}%
\usepackage{color}%
\usepackage{graphicx}%
\usepackage{hyperref}%

\setlength{\textwidth}{17cm}
\setlength{\textheight}{9in}
\setlength{\oddsidemargin}{0in}
\setlength{\topmargin}{-1.5cm}

\setlength{\parindent}{0ex}%
\setlength{\parskip}{1ex}%

\theoremstyle{change}%
\sloppy%

\newtheorem{definition}{Definition:}[section]%
\newtheorem{proposition}[definition]{Proposition:}%
\newtheorem{theorem}[definition]{Theorem:}%
\newtheorem{lemma}[definition]{Lemma:}%
\newtheorem{corollary}[definition]{Corollary:}%
{\theorembodyfont{\rmfamily}\newtheorem{remark}[definition]{Remark:}}%

\newenvironment{proof}
  {{\bf Proof:}}
  {\qquad \hspace*{\fill} $\Box$}%

\newcommand{\fa}{\mathfrak{a}}%
\newcommand{\fg}{\mathfrak{g}}%
\newcommand{\fk}{\mathfrak{k}}%
\newcommand{\fm}{\mathfrak{m}}%
\newcommand{\fn}{\mathfrak{n}}%
\newcommand{\fp}{\mathfrak{p}}%
\newcommand{\fs}{\mathfrak{s}}%
\newcommand{\fz}{\mathfrak{z}}%
\newcommand{\Ad}{\operatorname{Ad}}%
\newcommand{\ad}{\operatorname{ad}}%
\newcommand{\tr}{\operatorname{tr}}%
\newcommand{\id}{\operatorname{id}}
\newcommand{\inner}{\operatorname{int}}%
\newcommand{\cl}{\operatorname{cl}}%
\newcommand{\fix}{\operatorname{fix}}%
\newcommand{\Sl}{\operatorname{Sl}}%
\newcommand{\tm}{\times}%
\newcommand{\ep}{\varepsilon}%
\newcommand{\st}{\operatorname{st}}%
\newcommand{\reg}{\operatorname{reg}}%
\newcommand{\diag}{\operatorname{diag}}%
\newcommand{\Gl}{\mathrm{Gl}}%
\newcommand{\SO}{\mathrm{SO}}%
\newcommand{\rma}{\mathrm{\bf a}}%
\newcommand{\rmd}{\mathrm{d}}%
\newcommand{\rme}{\mathrm{e}}%
\newcommand{\rmR}{\mathrm{\bf R}}%
\newcommand{\rmA}{\mathrm{\bf A}}%
\newcommand{\rmS}{\mathrm{\bf S}}%
\newcommand{\AC}{\mathcal{A}}%
\newcommand{\BC}{\mathcal{B}}%
\newcommand{\CC}{\mathcal{C}}%
\newcommand{\EC}{\mathcal{E}}%
\newcommand{\MC}{\mathcal{M}}%
\newcommand{\OC}{\mathcal{O}}%
\newcommand{\QC}{\mathcal{Q}}%
\newcommand{\RC}{\mathcal{R}}%
\newcommand{\SC}{\mathcal{S}}%
\newcommand{\UC}{\mathcal{U}}%
\newcommand{\WC}{\mathcal{W}}%
\newcommand{\T}{\mathbb{T}}%
\newcommand{\E}{\mathbb{E}}%
\newcommand{\F}{\mathbb{F}}%
\newcommand{\R}{\mathbb{R}}%
\newcommand{\Z}{\mathbb{Z}}%
\newcommand{\Ly}{\mathrm{Ly}}%
\newcommand{\Mo}{\mathrm{Mo}}%
\newcommand{\inv}{\mathrm{inv}}%
\newcommand{\dist}{\mathrm{dist}}%
\newcommand{\tp}{\mathrm{top}}%
\newcommand{\length}{\mathrm{length}}%
\renewcommand{\sl}{\mathfrak{sl}}%

\begin{document}

\title{Lyapunov exponents and partial hyperbolicity of chain control sets on flag manifolds}%
\author{A.~Da Silva\thanks{Imecc - Unicamp, Departamento de Matem\'atica, Rua S\'ergio Buarque de Holanda, 651, Cidade Universit\'aria Zeferino Vaz 13083-859, Campinas - SP, Brasil; e-mail: ajsilva@ime.unicamp.br; AS was supported by FAPESP Grant 2016/11135-2 and a Guest Scientist Grant from the University of Passau, where part of this work was done.} \and C.~Kawan\thanks{Universit\"at Passau, Fakult\"at f\"ur Informatik und Mathematik, Innstra{\ss}e 33, 94032 Passau, Germany; e-mail: christoph.kawan@uni-passau.de}}
\date{}%
\maketitle%

\begin{abstract}
For a right-invariant control system on a flag manifold $\F_{\Theta}$ of a real semisimple Lie group, we relate the $\fa$-Lyapunov exponents to the Lyapunov exponents of the system over regular points. Moreover, we adapt the concept of partial hyperbolicity from the theory of smooth dynamical systems to control-affine systems, and we completely characterize the partially hyperbolic chain control sets on $\F_{\Theta}$.%
\end{abstract}
	
{\small {\bf Keywords:} Flag manifolds; semisimple Lie groups; right-invariant control systems; chain control sets; Lyapunov exponents; partial hyperbolicity}%

{\small {\bf Mathematics Subject Classification (2010):} 93C10; 93C15; 37C60; 37D30; 22E46}%
	

\section{Introduction}%

In this paper, we establish the concept of partial hyperbolicity as a property of controlled invariant sets of control-affine systems and we study a class of systems for which we can characterize the partially hyperbolic chain control sets completely. The notion of partial hyperbolicity was first introduced by Brin and Pesin \cite{BPe} within the theory of smooth dynamical systems. Generalizing the notion of uniform hyperbolicity, partial hyperbolicity is characterized by a splitting of the tangent bundle into three invariant subbundles, two of which form a uniformly hyperbolic splitting and the third one (the center bundle) lying strictly in between the other two in terms of growth rates. That is, in the center directions any expansion or contraction is uniformly slower than the expansion and contraction in the unstable and stable directions, respectively. The focus of the theory of partially hyperbolic dynamical systems is on indecomposibility properties such as ergodicity and topological transitivity, in particular the persistence of these properties under perturbations. For an overview of this theory the reader is referred to the excellent survey \cite{HPe}.%

The concept of uniform hyperbolicity for control systems was studied in \cite{CDu,DS1,DS2,DS4,Ka2}. In \cite{CDu}, controllability and robustness results were proved for chain control sets with a uniformly hyperbolic structure. In \cite{DS1,Ka2}, the authors derived a formula for the invariance entropy of a uniformly hyperbolic control set. In \cite{DS4}, it was proved that the invariance entropy of such sets depends continuously on parameters. A large class of examples of uniformly hyperbolic chain control sets was provided in \cite{DS2}. The main result of \cite{DS2} yields a complete classification of the uniformly hyperbolic chain control sets of invariant systems on flag manifolds of semisimple Lie groups. In the paper at hand, we extend this analysis with the aim to characterize the partially hyperbolic chain control sets. In \cite{DS2}, it has already been shown that every chain control set of an invariant system allows for a decomposition of its extended tangent bundle into three continuous invariant subbundles, two of which form a uniformly hyperbolic splitting. Hence, the remaining work is to single out those cases in which the expansion and contraction rates in center directions are uniformly strictly smaller than those in the stable and unstable directions.%

The paper is structured as follows. Section \ref{sec_dynctrl} gives a brief introduction to dynamical and control systems and introduces the concept of partial hyperbolicity for controlled invariant subsets of the state space. In Section \ref{sec_invcs}, the main concepts and results concerning flows on principal bundles with semisimple structural group are presented. Some technical lemmas used in the proof of the main results are also stated and proved in this section. In Section \ref{sec_le}, we prove our main result about Lyapunov exponents. It is shown that over regular points the Lyapunov exponents of invariant systems on flag manifolds can be recovered from a vectorial exponent, the so-called `$\fa$-Lyapunov exponent'. Moreover, the decomposition of the tangent bundle into three subbundles, present over the chain control sets of invariant systems, allows us to define the equivalent to a Morse spectrum for each of these subbundles and we show that this spectral set contains all the asymptotic information of the system provided by the Lyapunov exponents in the subbundle directions. Section \ref{sec_ph} is devoted to the study of partial hyperbolicity. Here we show that a complete characterization of this property is possible if one knows the Morse spectrum in the directions of the subbundles. We show that a chain control set of an invariant system on a flag manifold is partially hyperbolic if and only if there is no intersection between the Morse spectra of the subbundles. In Section \ref{sec_ex}, we analyze the possibilities for partially hyperbolic chain control sets of invariant systems on the flag manifolds of $G = \Sl(3,\R)$, and we present an example on the 2-torus, where we can explicitly verify partial hyperbolicity. Section \ref{sec_ie} is devoted to the application of the above characterization to the estimation of the invariance entropy of invariant systems on flag manifolds from below. It is shown that if the Morse spectrum associated with the center bundle is trivial, then the infimum, on the associated Morse set, of the exponential growth rate of the unstable determinant is a lower bound for the invariance entropy, generalizing the previous result in \cite{DS2} proved for the case of a vanishing center bundle. Some concepts and technical lemmas that are used in the main results are stated in an appendix, Section \ref{sec_ap}.%

{\bf Notation:} We write $\R$ for the reals, $\Z$ for the integers and $\Z_+ = \{ n \in \Z : n \geq 0\}$. If $M$ is a smooth manifold, $T_xM$ denotes the tangent space to $M$ at $x$. We write $(\rmd f)_x:T_xM \rightarrow T_{f(x)}N$ for the derivative of a smooth map $f$ between manifolds $M$ and $N$. If $A$ is a subset of some metric space, we write $\cl A$ for its closure and $\inner A$ for its interior, respectively. We use the notation $|\cdot|$ for vector norms and $\|\cdot\|$ for the associated operator norms. We also write $m(\cdot)$ for the conorm of an operator, i.e., $m(A) = \min_{|x|=1}|Ax|$. The natural logarithm of a real number $x>0$ is denoted by $\log x$, and additionally we put $\log 0 := -\infty$. Moreover, we write $\log^+ x = \max(0,\log x)$.%

\section{Dynamical and control systems}\label{sec_dynctrl}%
	
In this section, we recall well-known facts about flows on metric spaces and control-affine systems that can be found, e.g., in Colonius and Kliemann \cite{CKl}.%

\subsection{Morse decompositions}%

Consider a continuous flow $\phi:\R\tm X\rightarrow X$, $(t,x)\mapsto\phi_t(x)$, on a compact metric space $(X,d)$. A compact set $K\subset X$ is called \emph{isolated invariant} if it is invariant, i.e., $\phi_t(K)\subset K$ for all $t\in\R$, and if there is a neighborhood $N$ of $K$ such that the implication%
\begin{equation*}
	\phi_t(x) \in N \mbox{\ for all\ } t\in\R \quad\Rightarrow\quad x\in K%
\end{equation*}
holds. A \emph{Morse decomposition} of $\phi$ is a finite collection $\{\MC_1,\ldots,\MC_n\}$ of nonempty pairwise disjoint isolated invariant compact sets satisfying:%
\begin{enumerate}
\item[(a)] For all $x\in X$, the $\alpha$- and $\omega$-limit sets $\alpha(x)$ and $\omega(x)$, respectively, are contained in $\bigcup_{i=1}^n\MC_i$.%
\item[(b)] Suppose there are $\MC_{j_0},\ldots,\MC_{j_l}$ and $x_1,\ldots,x_l\in X\backslash\bigcup_{i=1}^n\MC_i$ with%
\begin{equation*}
	\alpha(x_i) \in \MC_{j_{i-1}} \mbox{\quad and\quad } \omega(x_i) \in \MC_{j_i}%
\end{equation*}
for $i=1,\ldots,l$. Then $\MC_{j_0}\neq\MC_{j_l}$.%
\end{enumerate}
The elements of a Morse decomposition are called \emph{Morse sets}. We say that a compact invariant set $A$ is an \emph{attractor} if it admits a neighborhood $N$ such that $\omega(N)=A$. A \emph{repeller} is a compact invariant set $R$ which has a neighborhood $N^*$ with $\alpha(N^*) = R$. A Morse decomposition is \emph{finer} than another one if every element of the second one contains one of the first.%
	
Morse decompositions are related to the chain recurrent set of the flow. Recall that an \emph{$(\ep,T)$-chain} from $x\in X$ to $y\in X$ is given by an integer $n\geq1$, $n+1$ points $x=x_0,x_1,\ldots,x_n=y \in X$, and $n$ times $T_0,\ldots,T_{n-1}\geq T$ such that $d(\phi_{T_i}(x_i),x_{i+1})<\ep$ for $i=0,1,\ldots,n-1$. A subset $Y\subset X$ is \emph{chain transitive} if for all $x,y\in Y$ and $\ep,T>0$ there is an $(\ep,T)$-chain from $x$ to $y$. A point $x\in X$ is \emph{chain recurrent} if for all $\ep,T>0$ there is an $(\ep,T)$-chain from $x$ back to $x$. The \emph{chain recurrent set $\RC = \RC(\phi)$} is the set of all chain recurrent points. Then the connected components of $\RC$ coincide with the maximal chain transitive subsets, which are also called the \emph{chain recurrent components} of $\phi$. A finest Morse decomposition for $\phi$ exists iff there are only finitely many chain recurrent components. In this case, the Morse sets are the chain recurrent components.%
	
\subsection{Control-affine systems}\label{subsec_cas}%

A \emph{control-affine system} is a family of ordinary differential equations of the form%
\begin{equation}\label{eq_cas}
	\dot{x}(t) = f_0(x(t)) + \sum_{i=1}^mu_i(t)f_i(x(t)),\quad u\in\UC,%
\end{equation}
where $f_0,f_1,\ldots,f_m$ are $\CC^1$-vector fields on a smooth manifold $M$, the \emph{state space} of the system. The set $\UC$ of \emph{admissible control functions} is given by%
\begin{equation*}
	\UC = \left\{u:\R\rightarrow\R^m\ :\ u \mbox{ is measurable with } u(t)\in U \mbox{ a.e.}\right\},%
\end{equation*}
where $U\subset\R^m$ is a compact and convex set with $0\in\inner U$. For each $u\in\UC$ and $x\in M$ the corresponding (Carath\'eodory) differential equation \eqref{eq_cas} has a unique solution $\varphi(t,x,u)$ with initial value $x = \varphi(0,x,u)$. The systems considered in this paper all have globally defined solutions, which give rise to a map%
\begin{equation*}
	\varphi:\R \tm M \tm \UC \rightarrow M,\quad (t,x,u) \mapsto \varphi(t,x,u),%
\end{equation*}
called the \emph{transition map} of the system. We also write $\varphi_{t,u}:M\rightarrow M$, $x\mapsto\varphi(t,x,u)$. If the vector fields $f_0,f_1,\ldots,f_m$ are of class $\CC^k$, then $\varphi$ is of class $\CC^k$ with respect to the state variable and the corresponding partial derivatives of order $1$ up to $k$ depend continuously on $(t,x,u) \in \R\tm M\tm\UC$ (see \cite[Thm.~1.1]{Kaw}).%
	
The transition map $\varphi$ is a cocycle over the shift flow%
\begin{equation*}
	\theta:\R \tm \UC \rightarrow \UC,\quad (t,u) \mapsto \theta_tu = u(\cdot + t),%
\end{equation*}
i.e., it satisfies $\varphi(t+s,x,u) = \varphi(s,\varphi(t,x,u),\theta_tu)$ for all $t,s\in\R$, $x\in M$, $u\in\UC$. Together with the shift flow, $\varphi$ constitutes a continuous skew-product flow%
\begin{equation*}
	\phi:\R \tm \UC \tm M \rightarrow \UC \tm M,\quad (t,u,x) \mapsto (\theta_tu,\varphi(t,x,u)),%
\end{equation*}
where $\UC$ is endowed with the weak$^*$-topology of $L^{\infty}(\R,\R^m) = L^1(\R,\R^m)^*$, which gives $\UC$ the structure of a compact metrizable space. The flow $\phi$ is called the \emph{control flow} of the system (cf.~\cite{CKl,Kaw}). The base flow $\theta$ is chain transitive.%
	
In the following, we fix a metric $d$ on $M$. We call a set $E\subset M$ \emph{all-time controlled invariant} if for each $x\in E$ there exists $u\in\UC$ with $\varphi(\R,x,u)\subset E$. The \emph{all-time lift} $\EC$ of $E$ is defined by%
\begin{equation*}
	\EC := \left\{ (u,x) \in \UC \tm M\ :\ \varphi(\R,x,u) \subset E \right\},%
\end{equation*}
which is easily seen to be $\phi$-invariant. For points $x,y\in M$ and numbers $\ep,\tau>0$, a \emph{controlled $(\ep,\tau)$-chain from $x$ to $y$} is given by an integer $n\geq 1$, points $x_0,\ldots,x_n\in M$, controls $u_0,\ldots,u_{n-1}\in\UC$, and times $t_0,\ldots,t_{n-1}\geq\tau$ such that $x_0 = x$, $x_n = y$, and $d(\varphi(t_i,x_i,u_i),x_{i+1}) < \ep$ for $i=0,\ldots,n-1$. A set $E\subset M$ is called a \emph{chain control set} if it is maximal with the following properties:%
\begin{enumerate}
\item[(A)] $E$ is all-time controlled invariant%
\item[(B)] For all $x,y\in E$ and $\ep,\tau>0$ there exists a controlled $(\ep,\tau)$-chain from $x$ to $y$ in $M$.%
\end{enumerate}
Every chain control set is closed. The all-time lift $\EC$ of a chain control set $E$ is a maximal $\phi$-invariant chain transitive set of the control flow $\phi$. Conversely, if $\EC\subset\UC\tm M$ is a maximal $\phi$-invariant chain transitive set, then the projection $E = \left\{x\in M : \exists u\in\UC \mbox{ with } (u,x)\in\EC\right\}$ of $\EC$ to $M$ is a chain control set (cf.~\cite[Thm.~4.1.4]{CKl}).%
	
Now we give the definition of a partially hyperbolic all-time controlled invariant set, for which we need to equip $M$ with a Riemannian metric.%

\begin{definition}\label{def_phs}
Let $E \subset M$ be a compact all-time controlled invariant set with all-time lift $\EC$. We call $E$ {\it partially hyperbolic} if there exists a decomposition%
\begin{equation*}
  T_xM = E^-(u,x) \oplus E^0(u,x) \oplus E^+(u,x),\quad \forall (u,x) \in \EC%
\end{equation*}
into linear subspaces with $\dim E^-(u,x) + \dim E^+(u,x) \geq 1$ for all $(u,x) \in \EC$, satisfying the following conditions:%
\begin{enumerate}
\item[(i)] The subspaces $E^i(u,x)$, $i = -,0,+$, depend continuously on $(u,x)$.%
\item[(ii)] The subspaces $E^i(u,x)$, $i = -,0,+$, define invariant subbundles in the sense that%
\begin{equation*}
  (\rmd\varphi_{t,u})_x E^i(u,x) = E^i(\phi_t(u,x)),\quad \forall (u,x) \in \EC,\ t \in \R.%
\end{equation*}
\item[(iii)] There exist constants $c>0$, $\lambda<0<\mu$ and $\lambda<\lambda'\leq\mu'<\mu$ such that for all $t\geq 0$ and $(u,x)\in\EC$ we have%
\begin{align*}
	\left|(\rmd\varphi_{t,u})_x v\right| &\leq c\rme^{\lambda t}|v| \mbox{\quad for all\ } v \in E^-(u,x),\\
	c^{-1}\rme^{\lambda't}|v| \leq &\left|(\rmd\varphi_{t,u})_x v\right| \leq c\rme^{\mu' t}|v| \mbox{\quad for all\ } v \in E^0(u,x),\\
	c^{-1}\rme^{\mu t}|v| &\leq \left|(\rmd\varphi_{t,u})_x v\right| \mbox{\quad for all\ } v \in E^+(u,x).%
\end{align*}
\end{enumerate}
\end{definition}

It is not hard to see that this definition is independent of the Riemannian metric imposed on $M$ due to the compactness of $E$. The assumption that $\dim E^- + \dim E^+ \geq 1$ excludes trivial cases. If $\dim E^0(u,x) = 0$ for all $(u,x) \in \EC$, we also call $E$ \emph{uniformly hyperbolic (without center bundle)}. If $\EC$ is connected, which holds, e.g., if $E$ is a chain control set, it easily follows that the dimensions of the subspaces $E^i(u,x)$ are constant on $\EC$.%

\section{Invariant control systems}\label{sec_invcs}
	
\subsection{Semisimple theory}

Standard references for the theory of semisimple Lie groups and their flag manifolds are Duistermat-Kolk-Varadarajan \cite{DKV}, Helgason \cite{Hel}, Knapp \cite{Kna} and Warner \cite{War}. In the following, we only provide a brief review of the concepts used in this paper, see also \cite{DS2}.%

Let $G$ be a connected semisimple non-compact Lie group $G$ with finite center and Lie algebra $\fg$. We choose a Cartan involution $\zeta:\fg\rightarrow\fg$ and denote by $B_{\zeta}(X,Y) = -C(X,\zeta(Y))$ the associated inner product, where $C(X,Y) = \tr(\ad(X)\ad(Y))$ is the Cartan-Killing form. If $\fk$ and $\fs$ stand, respectively, for the eigenspaces of $\zeta$ associated with $-1$ and $1$, the Cartan decompositions of $\fg$ and $G$ are given, respectively, by%
\begin{equation*}
  \fg = \fk \oplus \fs\;\;\;\;\mbox{ and }\;\;\;\;G = KS, \;\;\;\;\mbox{ where }\;\;\;K = \exp\fk \mbox{ and } S = \exp\fs.%
\end{equation*}
Fix a maximal abelian subspace $\fa\subset\fs$ and denote by $\Pi$ the set of roots for this choice. If $\fn^+ := \sum_{\alpha\in\Pi^{+}}\fg_{\alpha}$, where $\Pi^+$ is the set of positive roots and%
\begin{equation*}
  \fg_{\alpha} = \left\{ X\in\fg\ :\ \ad(H)X = \alpha(H)X,\ \forall H \in \fa \right\}%
\end{equation*}
is the root space associated with $\alpha\in\Pi$, the Iwasawa decompositions of $\fg$ and $G$ are given, respectively, by%
\begin{equation*}
	\fg = \fk \oplus \fa \oplus \fn^+ \;\;\;\;\mbox{ and }\;\;\;\; G=KAN^+, \;\;\;\;\mbox{ where }\;\;N^+=\exp\fn^+ \mbox{ and }A=\exp\fa.%
\end{equation*}
The Weyl group $\WC$ is the group generated by the orthogonal reflections at the hyperplanes $\ker\alpha$, where $\alpha\in\Sigma$ and $\Sigma$ denotes the set of simple roots. Alternatively, $\WC=M^*/M$, where $M^*$ and $M$ are the normalizer and the centralizer of $\fa$ in $K$, respectively. The principal involution $w_0\in\WC$ is the only element in $\WC$ that satisfies $w_0\Pi^+=-\Pi^-$, where $(w\alpha)(H):=\alpha(w^{-1}H)$.%
	 
Let $\fa^+\subset\fa$ be the positive Weyl chamber associated with the above choices and consider $H\in\cl\fa^+$. The eigenspaces of $\ad(H)$ in $\fg$ are given by $\fg_{\alpha}$, $\alpha\in\Pi$. The centralizer of $H$ in $\fg$ is given by%
\begin{equation*}
  \fn^0_H := \sum_{\alpha\in\Pi:\ \alpha(H)=0}\fg_{\alpha}%
\end{equation*}
and the centralizer in $\fk$ by $\fk_H := \fk \cap \fn_H^0$.\footnote{Usually, the notation $\fz_H$ is used for the centralizer of $H$. However, the notation $\fn^0_H$ will be more convenient for our purposes.} They are, respectively, the Lie algebra of the centralizer of $H$ in $G$, $Z_H:=\{g\in G: \Ad(g)H=H\}$, and in $K$, $K_H=K\cap Z_H$. The positive and negative nilpotent subalgebras of type $H$ are given by%
\begin{equation*}
  \fn^+_H := \sum_{\alpha\in\Pi :\ \alpha(H)>0}\fg_{\alpha} \mbox{\quad and\quad} \fn^-_H := \sum_{\alpha\in\Pi :\ \alpha(H)<0}\fg_{\alpha}.%
\end{equation*}
The subgroup $N^{\pm}_H = \exp(\fn^{\pm}_H)$ is a normal subgroup of $N^{\pm}$ satisfying $N^{\pm} = N^{\pm}_HN^{\pm}(H)$, where $N^{\pm}(H) = \exp(\fn^{\pm}(H))$ is given by%
\begin{equation*}
  \fn^{\pm}(H) := \sum_{\alpha\in\Pi^{\pm}:\ \alpha(H)=0}\fg_{\alpha}.%
\end{equation*}
The parabolic subalgebra of type $H$ is given by%
\begin{equation*}
  \fp_H := \sum_{\alpha\in\Pi:\ \alpha(H)\geq 0}\fg_{\alpha}.%
\end{equation*}
Its associated subgroup $P_H$ is the normalizer of $\fp_H$ in $G$. The flag manifold of type $H$ is given by the orbit $\F_H := \Ad(G)\fp_H$ or, equivalently, by the homogeneous space $G/P_H$.%

The natural action of $G$ on $\F_H$ is given by $(g,x) \mapsto g \cdot x := \Ad(g)\fp_H$. For a fixed element $g\in G$, the derivative of the diffeomorphism $x \mapsto g \cdot x$ on $\F_H$ is denoted by%
\begin{equation*}
  (\rmd[g])_x:T_x\F_H \rightarrow T_{g\cdot x}\F_H.%
\end{equation*}
	 
Alternatively, we can associate the above subalgebras and subgroups to any subset $\Theta\subset\Sigma$ by considering $H\in\cl\fa^+$ such that $\Theta=\Theta(H):=\{\alpha\in\Sigma : \alpha(H)=0\}$. When this is the case, we will use $\Theta$ instead of $H$ as the subscript. Moreover, we will denote by $\langle\Theta\rangle$ the set of roots in $\Pi$ generated by linear combinations of the elements in $\Theta$.%

An element of $\fg$ of the form $Y = \Ad(g)H$ with $g\in G$ and $H\in\cl\fa^+$ is called a \emph{split element}. The flow $\exp(tH)$, induced by a split element $H\in\cl\fa^+$ on $\F_{\Theta}$, is given by $(t,\Ad(g)\fp_{\Theta}) \mapsto \Ad(\rme^{tH}g)\fp_{\Theta}$. The associated vector field can be shown to be a gradient vector field with respect to an appropriate Riemannian metric on $\F_{\Theta}$. The connected components of the fixed point set of this flow are given by%
\begin{equation*}
	 \fix_{\Theta}(H,w) = Z_H \cdot wb_{\Theta} = K_H \cdot wb_{\Theta},\quad w \in \WC.%
\end{equation*}
The sets $\fix_{\Theta}(H,w)$ are in bijection with the double coset space $\WC_{\Theta(H)}\backslash \WC/\WC_{\Theta}$, where $\WC_{\Theta(H)}$ and $\WC_{\Theta}$ are, respectively, the group generated by the reflections at $\ker\alpha$ for $\alpha\in \Theta(H)$ and $\alpha\in\Theta$. Each component $\fix_{\Theta}(H,w)$ is a compact connected submanifold of $\F_{\Theta}$. Define the negative parabolic subalgebra of type $H$ by%
\begin{equation*}
  \fp^-_H := \sum_{\alpha\in \Pi:\ \alpha(H) \leq 0}\fg_{\alpha}%
\end{equation*}
and the negative parabolic subgroup $P^-_H$ as the normalizer of $\fp^-_H$.%
	 
Each connected component of the fixed point set has a stable manifold given by%
\begin{equation*}
  \st_{\Theta}(H,w) = N^-_H\cdot \fix_{\Theta}(H, w)=P^-_H\cdot wb_{\Theta}%
\end{equation*}
whose union gives the Bruhat decomposition of $\F_{\Theta}$:%
\begin{equation*}
  \F_{\Theta} = \dot{\bigcup_{w\in\WC_{\Theta(H)}\setminus\WC/\WC_{\Theta}}}\st_{\Theta}(H,w).%
\end{equation*}
In the general case, when $Y = \Ad(g)H$ for $g\in G$ and $H\in\cl\fa^+$, we have%
\begin{equation*}
  \fix_{\Theta}(Y,w) = g \cdot \fix_{\Theta}(H,w) \;\;\;\;\mbox{ and }\;\;\;\;\st_{\Theta}(Y,w) = g \cdot \st_{\Theta}(H,w), \quad w \in \WC.%
\end{equation*}
Moreover, $P_Y:=gP_Hg^{-1}$, $P^-_Y:=gP^-_Hg^{-1}$, $N^{\pm}_Y:=gN^{\pm}_Hg^{-1}$ and $Z_Y:=gZ_Hg^{-1}$.%
	 	
The next lemma relates the fixed point components to the stable manifolds.%
	
\begin{lemma}\label{equalityfix}
If $H_1,H_2\in\cl(\fa^+)$ satisfy $\Theta_1\subset\Theta_2$, where $\Theta_i=\Theta(H_i), \;i=1, 2$, then%
\begin{equation*}
  \fix_{\Theta}(H_2,w)\subset\bigcup_{s\in\WC_{\Theta_2}}\st_{\Theta}(H_1,sw).%
\end{equation*}
\end{lemma}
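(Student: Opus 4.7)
My plan is to use that the flow $\rme^{tH_1}$ on $\F_{\Theta}$ is a gradient flow and that it commutes with the flow generated by $H_2$. The key algebraic input coming from $\Theta_1\subset\Theta_2$ will be the chain of inclusions $Z_{H_1}\subset Z_{H_2}\subset P^-_{H_2}$, which should force the $H_1$-Bruhat label of any point of $\fix_{\Theta}(H_2,w)$ to be compatible with the $H_2$-Bruhat label of $w$.

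First I would fix $x \in \fix_{\Theta}(H_2,w)$ and set $p := \lim_{t\to+\infty}\rme^{tH_1}\cdot x$. Since $H_1,H_2$ commute in $\fa$, the flow $\rme^{tH_1}$ preserves $\fix_{\Theta}(H_2,w)$; being a gradient flow on the compact manifold $\F_{\Theta}$, it has a well-defined limit, which is a fixed point of $\rme^{tH_1}$. Hence $p \in \fix_{\Theta}(H_2,w) \cap \fix_{\Theta}(H_1,w')$ for some $w' \in \WC$, and by the definition of the stable manifold this gives $x \in \st_{\Theta}(H_1,w')$.

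The crucial step is then to pin down $w'$. From $\Theta_1\subset\Theta_2$ one has $\langle\Theta_1\rangle \subset \langle\Theta_2\rangle$, hence $\fn^0_{H_1}\subset\fn^0_{H_2}$ and thus $Z_{H_1}\subset Z_{H_2}$. Moreover $Z_{H_2}$ preserves $\fp^-_{H_2} = \sum_{\alpha(H_2)\le 0}\fg_{\alpha}$ under $\Ad$ (it commutes with $\ad(H_2)$), and so lies in $P^-_{H_2}$. Consequently
$$\fix_{\Theta}(H_1,w') = Z_{H_1}\cdot w'b_{\Theta} \subset P^-_{H_2}\cdot w'b_{\Theta} = \st_{\Theta}(H_2,w'),$$
so $p$ lies in $\st_{\Theta}(H_2,w)\cap\st_{\Theta}(H_2,w')$. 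Disjointness of the Bruhat cells with respect to $H_2$ now forces the double cosets $\WC_{\Theta_2}w\WC_{\Theta}$ and $\WC_{\Theta_2}w'\WC_{\Theta}$ to coincide, so I may write $w' = swt$ with $s\in\WC_{\Theta_2}$ and $t\in\WC_{\Theta}$. Because $\WC_{\Theta}$ stabilizes $b_{\Theta}$ (its generating reflections $r_{\alpha}$, $\alpha\in\Theta$, normalize $\fp_{\Theta}$), one has $w'\cdot b_{\Theta} = sw\cdot b_{\Theta}$, hence $\st_{\Theta}(H_1,w') = \st_{\Theta}(H_1,sw)$, and therefore $x\in\st_{\Theta}(H_1,sw)$, establishing the inclusion.

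The only genuinely non-trivial ingredient will be the observation $\fix_{\Theta}(H_1,w')\subset\st_{\Theta}(H_2,w')$: recognizing that an $H_1$-fixed-point component is already swallowed by an $H_2$-stable manifold is what lets the disjointness of the $H_2$-Bruhat cells pin down $w'$ modulo $\WC_{\Theta_2}$. The rest is bookkeeping with Weyl-group representatives.
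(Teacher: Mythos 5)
Your proof is correct and follows essentially the same route as the paper's: flow forward under $\rme^{tH_1}$, observe that the forward limit $p$ lies in $\fix_{\Theta}(H_2,w)$ as well as in some $\fix_{\Theta}(H_1,w')$, and then use $Z_{H_1}\subset Z_{H_2}$ to identify the coset of $w'$. The only cosmetic difference is that the paper pins down $w'$ via the stronger inclusion $\fix_{\Theta}(H_1,w')\subset\fix_{\Theta}(H_2,w')$ and the fact that two $H_2$-fixed-point components that meet must coincide, whereas you pass to $\fix_{\Theta}(H_1,w')\subset\st_{\Theta}(H_2,w')$ and invoke disjointness of the $H_2$-Bruhat cells; both give $w'\in\WC_{\Theta_2}w\WC_{\Theta}$.
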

	
\begin{proof}
It suffices to show that%
\begin{equation}\label{eq_claim}
  \fix_{\Theta}(H_2,w) \cap \st_{\Theta}(H_1,w')\neq\emptyset \quad\Rightarrow\quad w'\in \WC_{\Theta_2}w\WC_{\Theta}.%
\end{equation}
Indeed, if this holds, then%
\begin{align*}
&\fix_{\Theta}(H_2,w) = \fix_{\Theta}(H_2,w) \cap \dot{\bigcup_{w'\in\WC_{\Theta_1}\setminus\WC/\WC_{\Theta}}}\st_{\Theta}(H_1, w') = \dot{\bigcup_{w'\in\WC_{\Theta_1}\setminus\WC/\WC_{\Theta}}}[\fix_{\Theta}(H_2,w) \cap \st_{\Theta}(H_1, w')]\\
	&\subset \bigcup_{s\in\WC_{\Theta_2}} \dot{\bigcup_{w'\in\WC_{\Theta_1}\setminus\WC/\WC_{\Theta}}}[\fix_{\Theta}(H_2,w) \cap \st_{\Theta}(H_1,sw')] \subset \bigcup_{s\in\WC_{\Theta_2}} [\fix_{\Theta}(H_2,w) \cap \st_{\Theta}(H_1,sw)] \subset \bigcup_{s\in\WC_{\Theta_2}}\st_{\Theta}(H_1,sw).%
\end{align*}
To show \eqref{eq_claim}, let $x \in \fix_{\Theta}(H_2,w) \cap \st_{\Theta}(H_1,w')$ for some $w'\in\WC$. Since $\fix_{\Theta}(H_2,w)$ is invariant under the actions of elements of $A$ (using that $A \subset Z_{H_2}$), we have $\rme^{nH_1} \cdot x \in \fix_{\Theta}(H_2,w)$ for all $n$. On the other hand, since $x\in\st_{\Theta}(H_1,w')$, we can write $x=g\cdot z$ with $g\in N_{\Theta_1}^-$ and $z\in\fix_{\Theta}(H_1,w')$. Using that $z$ is an equilibrium of the flow $(\rme^{tH_1})_{t\in\R}$, this implies%
\begin{equation*}
  \lim_{n\rightarrow\infty}\rme^{nH_1} \cdot x = \lim_{n\rightarrow\infty}\rme^{nH_1}g \cdot z = \lim_{n\rightarrow\infty}\rme^{nH_1} g \rme^{-nH_1} \cdot z = z,%
\end{equation*}
since $g \in N_{\Theta_1}^-$ implies $\rme^{nH_1}g\rme^{-nH_1} \rightarrow 1$. (Write $g = \exp(X)$ with $X \in \fn_{\Theta}^-$. Then $\rme^{nH}g\rme^{-nH} = (C_{\rme^{nH}} \circ \exp)(X) = (\exp \circ \Ad(\rme^{nH}))(X)  = (\exp \circ \rme^{n\ad(H)})(X)$. By definition of $\fn_{\Theta}^-$, we can assume that $X \in \fg_{\alpha}$ for some $\alpha\in\Pi$ with $\alpha(H)<0$. This implies $\rme^{n\ad(H)}X = \sum_{k=0}^{\infty}\frac{1}{k!} (n\alpha(H))^k X = \rme^{n\alpha(H)}X \rightarrow 0$, implying $\rme^{nH}g\rme^{-nH} \rightarrow 1$.) Now $z \in \fix_{\Theta}(H_1,w') \subset \fix_{\Theta}(H_2,w')$, implying $z \in \fix_{\Theta}(H_2,w) \cap \fix_{\Theta}(H_2,w')$, and thus $\fix_{\Theta}(H_2,w) = \fix_{\Theta}(H_2,w')$. This concludes the proof, because it implies $w' \in \WC_{\Theta_2}w\WC_{\Theta}$ and hence proves the claim \eqref{eq_claim}.
\end{proof}
	
Finally, we briefly describe the construction of a $K$-invariant Riemannian metric on $\F_{\Theta}$. For any $x\in\F_{\Theta}$ let us consider the linear map%
\begin{equation*}
  \pi_x:\fg\rightarrow T_x\F_{\Theta}, \quad X\mapsto X(x) := \frac{\rmd}{\rmd t}_{|_{t=0}}\rme^{tX} \cdot x.%
\end{equation*}
The isotropy subalgebra at $x$ is $\fg_x:=\ker\pi_x$. For any $g\in G$ we have%
\begin{equation*}
	(\rmd[g])_x\circ\pi_x = \pi_{gx}\circ \Ad(g),%
\end{equation*}
and therefore $\Ad(g)\fg_x = \fg_{gx}$. If $\fg_x^{\perp}$ stands for the orthogonal complement of $\fg_x$ with respect to the $K$-invariant inner product $B_{\zeta}$, the $K$-invariance of $B_{\zeta}$ implies $\Ad(g)\fg_x^{\perp}=\fg_{gx}^{\perp}$ for any $g\in K$. Moreover, it is straightforward to see that $\pi_x$ restricted to $\fg_x^{\perp}$ is a linear isomorphism between $\fg_x^{\perp}$ and $T_x\F_{\Theta}$ and so we can consider in $T_x\F_{\Theta}$ the inner product%
\begin{equation*}
  \langle X(x),Y(x)\rangle_x := B_{\zeta}(X,Y), \;\; \forall X,Y\in\fg_x^{\perp}.%
\end{equation*}
	
We have the following result (see \cite[Prop.~3.1]{MPLS}).%
	
\begin{proposition}\label{K-invariant}
The inner product $\langle\cdot,\cdot\rangle_x$ defines a $K$-invariant Riemannian metric on $\F_{\Theta}$ such that the restriction of the map 
$\pi_x$ to $\fg_x^{\perp}$ is an isometry. Furthermore, for any $X\in\fg$ we have $|\pi_x(X)| \leq |X|$ with equality iff $X\in\fg_x^{\perp}$.
\end{proposition}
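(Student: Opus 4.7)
The plan is to verify four things in increasing order of difficulty: that $\pi_x$ restricted to $\fg_x^{\perp}$ is an isometry (trivial from the definition), the norm inequality, $K$-invariance, and smoothness of the resulting tensor field. The isometry assertion is immediate because $\langle\pi_x(X),\pi_x(Y)\rangle_x := B_{\zeta}(X,Y)$ is exactly the defining formula when $X,Y\in\fg_x^{\perp}$. For the inequality, I would decompose an arbitrary $X\in\fg$ orthogonally with respect to $B_{\zeta}$ as $X=X_0+X_1$ with $X_0\in\fg_x$ and $X_1\in\fg_x^{\perp}$. Since $\ker\pi_x=\fg_x$, one has $\pi_x(X)=\pi_x(X_1)$, so by the isometry
\[
|\pi_x(X)|^2 = B_{\zeta}(X_1,X_1) \leq B_{\zeta}(X_0,X_0)+B_{\zeta}(X_1,X_1) = |X|^2,
\]
with equality iff $X_0=0$, i.e.\ iff $X\in\fg_x^{\perp}$.

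For $K$-invariance, I would fix $k\in K$, $x\in\F_{\Theta}$, and $v\in T_x\F_{\Theta}$, and write $v=\pi_x(X)$ with $X\in\fg_x^{\perp}$. The intertwining identity $(\rmd[k])_x\circ\pi_x=\pi_{kx}\circ\Ad(k)$ recorded just before the proposition gives $(\rmd[k])_xv=\pi_{kx}(\Ad(k)X)$. Combining $\Ad(k)\fg_x=\fg_{kx}$ with the $K$-invariance of $B_{\zeta}$ yields $\Ad(k)\fg_x^{\perp}=\fg_{kx}^{\perp}$ (this was already noted in the paragraph preceding the proposition), so $\Ad(k)X\in\fg_{kx}^{\perp}$. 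Then by the definition of the inner products at $x$ and at $kx$,
\[
\bigl|(\rmd[k])_xv\bigr|_{kx}^2 = B_{\zeta}(\Ad(k)X,\Ad(k)X) = B_{\zeta}(X,X) = |v|_x^2,
\]
which is the desired $K$-invariance. A symmetric argument with $Y\in\fg_x^{\perp}$ extends this from norms to the bilinear pairing.

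For smoothness, I would argue that $x\mapsto\fg_x=\ker\pi_x$ is a smooth subbundle of the trivial bundle $\F_{\Theta}\times\fg$: the linear map $\pi_x$ varies smoothly in $x$ (its action on a fixed $X\in\fg$ is the value at $x$ of a smooth vector field on $\F_{\Theta}$), and $\Ad(g)\fg_x=\fg_{gx}$ together with the transitivity of the $G$-action forces $\dim\fg_x$ to be constant, so $\pi_x$ has constant rank. Orthogonal projection with respect to the fixed form $B_{\zeta}$ then produces a smooth complementary subbundle $x\mapsto\fg_x^{\perp}$, and the tensor $\langle\cdot,\cdot\rangle_x$, which is the pushforward of $B_{\zeta}|_{\fg_x^{\perp}}$ by the smoothly varying linear isomorphism $\pi_x|_{\fg_x^{\perp}}$, is therefore a smooth Riemannian metric. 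The main substantive step is the $K$-invariance computation, which hinges entirely on the identity $\Ad(k)\fg_x^{\perp}=\fg_{kx}^{\perp}$; this is exactly where we use that $B_{\zeta}$ is $K$-invariant but not $G$-invariant, which is why the metric is $K$-invariant rather than $G$-invariant.
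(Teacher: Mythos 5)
The paper does not actually prove this proposition itself; it cites it directly from \cite[Prop.~3.1]{MPLS}, so there is no in-paper argument to compare against. Your proof is correct and self-contained: the isometry claim is a tautology of the definition; the inequality follows from the $B_{\zeta}$-orthogonal decomposition $X=X_0+X_1$ together with $\pi_x(X_0)=0$ and positive definiteness of $B_{\zeta}$; the $K$-invariance computation correctly chains the intertwining identity $(\rmd[k])_x\circ\pi_x=\pi_{kx}\circ\Ad(k)$ with $\Ad(k)\fg_x^{\perp}=\fg_{kx}^{\perp}$ and the $K$-invariance of $B_{\zeta}$; and the smoothness argument via constant rank of $x\mapsto\pi_x$ (justified by transitivity and $\Ad(g)\fg_x=\fg_{gx}$) is sound. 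A small shortcut worth knowing: once $K$-invariance is established, smoothness is automatic because $K$ acts transitively on $\F_{\Theta}$ (since $G=KP_{\Theta}$), so a $K$-invariant tensor defined at one point and propagated by the action is necessarily smooth; but your direct subbundle argument is equally valid.
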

	
For any $x\in\F_{\Theta}$, let us denote by $P_x$ the orthogonal projection onto $\fg^{\perp}_x$ in $\fg$. By the above proposition it is easy to see that%
\begin{equation*}
	|X(x)| = |P_x(X)| = \inf_{Y\in\fg_x}|X - Y|.%
\end{equation*}

\begin{lemma}\label{auxiliar}
Let $Y\in\fg$ be a split-regular element and denote by $U$ an eigenspace of $\ad(Y)$. For any $x\in\mathrm{fix}_{\Theta}(Y,w)$, the spaces $\fg_x$ and $\fg_x^{\perp}$ are $\ad(Y)$-invariant. Moreover, if $U\cap \fg_x^{\perp}\neq\emptyset$ for some $x\in\mathrm{fix}_{\Theta}(Y,w)$, then $U\cap \fg_x^{\perp}\neq\emptyset$ for all $x\in\mathrm{fix}_{\Theta}(Y,w)$.%
\end{lemma}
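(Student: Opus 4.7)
The plan is to prove the three assertions in sequence, combining: (i) that the isotropy is preserved by any flow fixing $x$; (ii) the pairwise $B_\zeta$-orthogonality of the $\ad(Y)$-eigenspaces, which comes from the root space orthogonality $B_\zeta(\fg_\alpha,\fg_\beta)=-C(\fg_\alpha,\fg_{-\beta})=0$ for $\alpha\neq\beta$; and (iii) the transitive action of the centralizer $Z_Y$ on $\fix_\Theta(Y,w)$.

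First I would establish $\ad(Y)\fg_x\subseteq\fg_x$. Since $x\in\fix_\Theta(Y,w)$ gives $\rme^{tY}\cdot x=x$ for every $t$, one has $\fg_x=\fg_{\rme^{tY}\cdot x}=\Ad(\rme^{tY})\fg_x$, and differentiating at $t=0$ yields $\ad(Y)\fg_x\subseteq\fg_x$. For the $\ad(Y)$-invariance of $\fg_x^{\perp}$, I would write $\fg=\bigoplus_\lambda U_\lambda$ for the eigenspace decomposition of $\ad(Y)$. When $Y\in\cl\fa^+$ the $U_\lambda$ are sums of root spaces $\fg_\alpha$ (together with $\fm\oplus\fa$), and from $\zeta\fg_\alpha=\fg_{-\alpha}$ and $C(\fg_\alpha,\fg_\gamma)=0$ for $\alpha+\gamma\neq 0$ one gets $B_\zeta(\fg_\alpha,\fg_\beta)=-C(\fg_\alpha,\zeta\fg_\beta)=-C(\fg_\alpha,\fg_{-\beta})=0$ whenever $\alpha\neq\beta$; thus the $U_\lambda$ are pairwise $B_\zeta$-orthogonal. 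Combined with the $\ad(Y)$-invariance of $\fg_x$, this forces $\fg_x=\bigoplus_\lambda(\fg_x\cap U_\lambda)$, and the orthogonality of the $U_\lambda$ then yields $\fg_x^{\perp}=\bigoplus_\lambda(U_\lambda\cap\fg_x^{\perp})$, which is manifestly $\ad(Y)$-invariant.

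For the `moreover' claim I would use that $\fix_\Theta(Y,w)=Z_Y\cdot x_\ast$. Given $x,x'\in\fix_\Theta(Y,w)$, pick $h\in Z_Y$ with $x'=h\cdot x$, so $\fg_{x'}=\Ad(h)\fg_x$. Since $h$ commutes with $\rme^{tY}$, $\Ad(h)$ commutes with $\ad(Y)$ and hence preserves every eigenspace: $\Ad(h)U=U$. Therefore $\Ad(h)(U\cap\fg_x)=U\cap\Ad(h)\fg_x=U\cap\fg_{x'}$, so $\dim(U\cap\fg_x)$ is constant on the component. Using the $\ad(Y)$-invariant splitting $U=(U\cap\fg_x)\oplus(U\cap\fg_x^{\perp})$ obtained in the previous step, the complementary dimension $\dim(U\cap\fg_x^{\perp})=\dim U-\dim(U\cap\fg_x)$ is likewise constant on $\fix_\Theta(Y,w)$; in particular nontriviality of $U\cap\fg_x^{\perp}$ at one point propagates to every point.

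The main technical subtlety I anticipate is ensuring that the eigenspaces $U_\lambda$ are $B_\zeta$-orthogonal when $Y$ is a general split-regular element of the form $\Ad(g)H$: for $g\in K$ this follows immediately from $K$-invariance of $B_\zeta$, but for arbitrary $g$ the map $\Ad(g)$ is not $B_\zeta$-isometric. Pinning this down — presumably by reducing to a representative of $\fix_\Theta(Y,w)$ for which the decomposition is compatible with the Cartan involution and then transporting the statement along the $Z_Y$-action exactly as in the `moreover' argument — will be the delicate point.
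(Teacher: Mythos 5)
Your proof is correct and takes essentially the same route as the paper. The first part matches: $\rme^{tY}$ fixes $x$, so $\Ad(\rme^{tY})\fg_x = \fg_x$, and differentiating gives $\ad(Y)\fg_x\subset\fg_x$; the paper then passes to $\fg_x^\perp$ by invoking self-adjointness of $\ad(Y)$ with respect to $B_\zeta$, which is exactly what your root-space orthogonality computation verifies. In the ``moreover'' step the paper transports $U\cap\fg_x^\perp$ directly via $\Ad(l)$ with $l\in K_Y\subset K$, using that $\Ad(l)$ is a $B_\zeta$-isometry carrying $\fg_x^\perp$ to $\fg_{x'}^\perp$; you instead track $\dim(U\cap\fg_x)$ along a $Z_Y$-orbit and pass to the complement via the orthogonal splitting $U=(U\cap\fg_x)\oplus(U\cap\fg_x^\perp)$, a minor repackaging that avoids needing the transporting element to lie in $K$. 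The subtlety you flag in your last paragraph is real and is also present, tacitly, in the paper's proof: self-adjointness of $\ad(Y)$ and the inclusion $K_Y\subset K$ both require $Y\in\fs$, i.e.\ $Y=\Ad(k)H$ with $k\in K$; this cannot be removed from the argument, but it holds in every application, since there $Y=D(u)=\Ad(k)\lambda^+(u)$ with $k\in K$.
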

	
\begin{proof}
In fact, since $\ad(Y)$ is self-adjoint, to prove the first statement it is enough to show that $\ad(Y)\fg_x\subset\fg_x$. For any $X\in\fg_x$, we have $\ad(Y)X\in\fg_x$ iff $\Ad\left(\rme^{tY}\right)X = \rme^{t\ad(Y)}X\in\fg_x$ for all $t\in\R$. To show the last equation, for a fixed $t\in\R$, let%
\begin{equation*}
 \gamma(s) := \exp\left(s\Ad(\rme^{tY})X \right)\cdot x,\quad \gamma:\R \rightarrow \F_{\Theta}.
\end{equation*}
Since $x \in \fix_{\Theta}(Y,w)$, this point is an equilibrium of the flow $(\rme^{tY})_{t\in\R}$, and hence%
\begin{align*}
  \dot{\gamma}(0) &= \frac{\rmd}{\rmd s}_{|s=0} \rme^{tY} \rme^{sX} \rme^{-tY} \cdot x = \frac{\rmd}{\rmd s}_{|s=0} \rme^{tY} \rme^{sX} \cdot x = (\rmd[\rme^{tY}])_x X(x) = 0,%
\end{align*}
because $X(x) = 0$. Hence, $\Ad(\rme^{tY})X \in \fg_x$, proving that $\fg_x$ is $\ad(Y)$-invariant.%

For the second statement, let $x,x' \in \fix_{\Theta}(Y,w)$. By the definition of $\fix_{\Theta}(Y,w)$, there exists $l \in K_Y$ with $x' = l \cdot x$. Using that $\ad(Y)$ commutes with $\Ad(g)$ for all $g\in Z_Y$, we find that%
\begin{equation}\label{eq_intersection_prop}
  U \cap \fg_{x'}^{\bot} = U \cap \Ad(l)\fg_x^{\bot} = \Ad(l)(U \cap \fg_x^{\bot}).%
\end{equation}
Here we use that $\Ad(l)U = U$, which is shown as follows. Take $X \in U$ and let $\ad(Y)X = \lambda X$. Then%
\begin{equation*}
  \ad(Y)\Ad(l)X = \Ad(l) \ad(\Ad(l^{-1})Y)X = \Ad(l) \ad(Y)X = \lambda \Ad(l)X,%
\end{equation*}
implying $\Ad(l)X \in U$. The second statement of the lemma follows immediately from \eqref{eq_intersection_prop}.%
\end{proof}
	
\subsection{Flows on flag bundles and $\fa$-Lyapunov exponents}
	
Let $\pi:Q\rightarrow X$ be a $G$-principal bundle over the compact metric space $X$ with a semisimple Lie group $G$. Then $G$ acts continuously from the right on $Q$, this action preserves the fibers, and is free and transitive on each fiber. In particular, this implies that each fiber is homeomorphic to $G$. An automorphism of $Q$ is a homeomorphism $\phi:Q\rightarrow Q$ which maps fibers to fibers and respects the right action of $G$ in the sense that $\phi(q \cdot g) = \phi(q) \cdot g$. For each set $\Theta\subset\Sigma$ of simple roots there is a flag bundle $\E_{\Theta} = Q \tm_G \F_{\Theta}$ with typical fiber $\F_{\Theta}$ given by $(Q\tm \F_{\Theta})/\!\sim$, where $(q_1,b_1)\sim (q_2,b_2)$ iff there exists $g\in G$ with $q_1 = q_2 \cdot g$ and $b_1 = g^{-1} \cdot b_2$. We write $\E$ for the maximal flag bundle $Q\tm_G \F$.%
	
Now let $\phi_n:Q\rightarrow Q$, $n\in\Z$, be a (discrete-time) flow of automorphisms whose base flow on $X$ is chain transitive. This flow induces a flow on each of the associated flag bundles $\E_{\Theta}$, which we also denote by $\phi_n$. The flow $\phi_n:\E_{\Theta}\rightarrow\E_{\Theta}$ has finitely many chain recurrent components and thus a finest Morse decomposition. The Morse sets can be described as follows.%

\begin{theorem}{(cf.~\cite[Thm.~9.11]{BSM}, \cite[Thm.~5.2]{SMLS})}\label{thm_morsesets}
\begin{enumerate}
\item[(i)] There exist $H_{\phi}\in\cl\fa^+$ and a continuous $\phi$-invariant map%
\begin{equation*}
	h_{\phi}:Q \rightarrow \Ad(G)H_{\phi},\quad h_{\phi}(\phi_n(q)) \equiv h_{\phi}(q),%
\end{equation*}
into the adjoint orbit of $H_{\phi}$, which is equivariant, i.e., $h_{\phi}(q\cdot g) = \Ad(g^{-1})h_{\phi}(q)$, $q\in Q$, $g\in G$. The induced flow on $\E_{\Theta}$ admits a finest Morse decomposition whose elements are given fiberwise by%
\begin{equation*}
	\MC_{\Theta}(w)_{\pi(q)} = q \cdot \fix_{\Theta}(h_{\phi}(q),w),\quad w\in\WC.%
\end{equation*}
The set $\Theta(\phi) := \Theta(H_{\phi}) = \{\alpha \in \Sigma : \alpha(H_{\phi}) = 0\}$ is called the \emph{flag type} of the flow $\phi$.
\item[(ii)] The induced flow on $\E_{\Theta}$ admits only one attractor component $\MC^+_{\Theta} = \MC_{\Theta}(1)$ and one repeller component $\MC^-_{\Theta} = \MC_{\Theta}(w_0)$. Moreover, the attractor component $\MC^+_{\Theta(\phi)}$ is given as the image of a continuous section $\sigma_{\phi}:X\rightarrow\E_{\Theta(\phi)}$, i.e.,%
\begin{equation*}
	\left(\MC^+_{\Theta(\phi)}\right)_x = \sigma_{\phi}(x)\quad \mbox{for all } x\in X.%
\end{equation*}
\end{enumerate}
\end{theorem}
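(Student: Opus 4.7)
My plan is to combine three ingredients: the existence of a finest Morse decomposition on each $\E_\Theta$, a fiberwise reduction to the dynamics of a single split-regular element on $\F_\Theta$, and the rigidity of the attractor component when $\Theta = \Theta(\phi)$. Since the typical fiber $\F_\Theta$ is a compact manifold and $X$ is compact, the bundle space $\E_\Theta$ is a compact metric space. The base flow is chain transitive, so the induced flow on $\E_\Theta$ admits a finest Morse decomposition as soon as its chain recurrent set has only finitely many components; this finiteness will fall out of the explicit fiberwise description produced below.

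For the construction of $h_\phi$ and $H_\phi$, I would fix $q\in Q$ and trivialize the fiber of $\E_\Theta$ over $\pi(q)$ by $b\mapsto[q,b]$, so that $\phi_n$ along an orbit of the base flow becomes a cocycle over $X$ with values in $G$ acting on $\F_\Theta$. The rigidity theorem for continuous principal bundle flows with semisimple structure group (the main content of \cite[Thm.~9.11]{BSM} and \cite[Thm.~5.2]{SMLS}) then produces a single adjoint orbit $\Ad(G)H_\phi$ with $H_\phi\in\cl\fa^+$ and a continuous flow-invariant map $h_\phi:Q\to\Ad(G)H_\phi$ recording the hyperbolic part of the fiberwise cocycle. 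Equivariance $h_\phi(q\cdot g)=\Ad(g^{-1})h_\phi(q)$ is forced by the way the trivialization transforms under the right $G$-action: replacing the frame $q$ by $q\cdot g$ conjugates the fiberwise cocycle by $g^{-1}$, which acts on $\cl\fa^+$-type data by $\Ad(g^{-1})$. Once $h_\phi$ is available, the Morse sets on $\E_\Theta$ are obtained fiberwise by transferring the Morse decomposition of the gradient-like flow $\rme^{tH_\phi}$ on $\F_\Theta$; by the Bruhat theory recalled in Section~\ref{sec_invcs}, those Morse sets are precisely the $\fix_\Theta(H_\phi,w)$, $w\in\WC$. This yields $\MC_\Theta(w)_{\pi(q)}=q\cdot\fix_\Theta(h_\phi(q),w)$ and in particular finiteness of the chain recurrent components.

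For part (ii), the gradient flow $\rme^{tH_\phi}$ has a unique attracting fixed component $\fix_\Theta(H_\phi,1)$ and a unique repelling one $\fix_\Theta(H_\phi,w_0)$ on $\F_\Theta$; transferring to the fibers gives $\MC^+_\Theta=\MC_\Theta(1)$ and $\MC^-_\Theta=\MC_\Theta(w_0)$. In the distinguished case $\Theta=\Theta(\phi)=\Theta(H_\phi)$, the class of the identity in $\WC_{\Theta(\phi)}\backslash\WC/\WC_{\Theta(\phi)}$ collapses so that $\fix_{\Theta(\phi)}(H_\phi,1)=\{b_{\Theta(\phi)}\}$ is a single point, whence $\MC^+_{\Theta(\phi)}$ meets each fiber in exactly one point. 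The resulting map $\sigma_\phi:X\to\E_{\Theta(\phi)}$ is a section, and its continuity follows from continuity of $h_\phi$ together with that of the quotient map $Q\tm\F_{\Theta(\phi)}\to\E_{\Theta(\phi)}$.

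The hardest step is the existence and continuity of $h_\phi$ itself, which is the true content of the theorem. General Morse theory yields only finitely many invariant sets; the fact that they organize fiberwise along a single $\Ad$-orbit in $\fg$ is specific to the semisimple setting and requires a multiplicative ergodic-type argument controlling the Iwasawa component of the fiberwise cocycle uniformly in the base point. Continuity of $h_\phi$ in turn rests on uniform spectral gaps between the hyperbolic Lyapunov blocks of this cocycle, which is where the bulk of the technical work in \cite{BSM,SMLS} is concentrated.
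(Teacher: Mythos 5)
The paper gives no proof of this theorem at all: it is stated as a citation to \cite[Thm.~9.11]{BSM} and \cite[Thm.~5.2]{SMLS}, and all the technical work lives in those references (and in Proposition~\ref{prop_morsesets}, which the paper does state as the relevant mechanism). So there is no in-paper proof to compare against; what you have written is a reconstruction of the argument in the cited sources, and it gets the broad outline right but hand-waves past the one step that actually carries the weight.

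The problematic sentence is ``the Morse sets on $\E_{\Theta}$ are obtained fiberwise by transferring the Morse decomposition of the gradient-like flow $\rme^{tH_{\phi}}$ on $\F_{\Theta}$.'' The induced flow $\phi_n$ on $\E_{\Theta}$ is not, on a fiber, the time-one map of $\rme^{tH_{\phi}}$ or of any gradient flow; in a local trivialization it is an arbitrary cocycle with values in $G$. Chain recurrence of a skew product over a chain transitive base does not simply decompose fiberwise, and a priori a chain recurrent component of $\phi$ could spread across several of the sets $q\cdot\fix_{\Theta}(h_{\phi}(q),w)$. The actual route (Proposition~\ref{prop_morsesets} in the paper, and the corresponding result in \cite{SMLS}) is a block reduction: once $h_{\phi}$ exists, $Q_{\phi}=h_{\phi}^{-1}(H_{\phi})$ is a $\phi$-invariant $Z_{\phi}$-subbundle, and over $Q_{\phi}$ the cocycle takes values in $Z_{\phi}\subset P_{H_\phi}\cap P^-_{H_\phi}$, which fixes each $\fix_{\Theta}(H_{\phi},w)$ and hence makes each $\MC_{\Theta}(w)$ invariant; one then still has to argue that these sets are isolated and chain transitive, using chain transitivity of the base and hyperbolicity transversal to the center blocks. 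None of this is in your sketch. You do flag correctly that the existence and continuity of $h_{\phi}$ is the ``true content'' and that it rests on a multiplicative-ergodic type argument with uniform spectral gaps; that is accurate, and since the paper itself defers entirely to \cite{BSM,SMLS} for this, so may you. But if you present the fiberwise identification of the Morse sets as a consequence of transferring a gradient flow, that step as written is not correct, and a reader following it would hit a real gap.

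One small further inaccuracy in part (ii): the reason $\fix_{\Theta(\phi)}(H_{\phi},1)$ is a single point is not that ``the class of the identity in $\WC_{\Theta(\phi)}\backslash\WC/\WC_{\Theta(\phi)}$ collapses'' (the double coset space is unchanged), but that $K_{H_{\phi}}=K_{\Theta(\phi)}\subset P_{\Theta(\phi)}$ already fixes $b_{\Theta(\phi)}$, so the orbit $K_{H_{\phi}}\cdot b_{\Theta(\phi)}$ degenerates to $\{b_{\Theta(\phi)}\}$. The conclusion you draw is right; the explanation is not.
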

	
For the Morse sets on the maximal flag bundle $\E$ we also write $\MC(w)$. Alternatively, the Morse sets can be described via a block reduction of $\phi$.%
	
\begin{proposition}{(\cite[Prop.~5.4]{SMLS})}\label{prop_morsesets}
The set $Q_{\phi} = h_{\phi}^{-1}(H_{\phi})$ is a $\phi$-invariant subbundle of $Q$ with structural group $Z_{\phi} := Z_{H_{\phi}}$, called a block reduction of $\phi$. There exists a $K_{\phi}$-reduction $R_{\phi} \subset Q_{\phi}$, i.e., a subbundle with structural group $K_{\phi} = Z_{\phi} \cap K$, and%
\begin{equation*}
	\MC_{\Theta}(w) = \left\{ q \cdot wb_{\Theta}\ :\ q\in Q_{\phi} \right\} = \left\{ r\cdot wb_{\Theta}\ :\ r \in R_{\phi} \right\}.%
\end{equation*}
\end{proposition}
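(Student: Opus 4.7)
My plan is to extract all four claims from the properties of the map $h_\phi$ established in Theorem \ref{thm_morsesets}(i). The $\phi$-invariance of $Q_\phi = h_\phi^{-1}(H_\phi)$ is immediate from $h_\phi \circ \phi_n \equiv h_\phi$. The equivariance relation $h_\phi(q \cdot g) = \Ad(g^{-1}) h_\phi(q)$ then shows both that $Q_\phi$ is stable under the right action of $Z_\phi$, the $\Ad$-stabilizer of $H_\phi$, and that $h_\phi$ restricts to a surjection of each fibre $Q_x$ onto the adjoint orbit $\Ad(G) H_\phi \cong G/Z_\phi$ whose preimage of $H_\phi$ is a single $Z_\phi$-orbit. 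To get local triviality, I would pull back a local section of the principal $Z_\phi$-bundle $G \to G/Z_\phi$: if $s: U \to Q$ is a local section of $Q \to X$ and $\tau$ is a continuous local section of $G \to G/Z_\phi$ over a neighbourhood of $h_\phi(s(x_0))$ (under the above identification), then $x \mapsto s(x) \cdot \tau(h_\phi(s(x)))$ is a continuous local section of $Q_\phi$. This equips $Q_\phi$ with the structure of a $\phi$-invariant $Z_\phi$-principal subbundle of $Q$.

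For the $K_\phi$-reduction $R_\phi$, I would invoke the classical fact that a principal bundle with reductive structural group over a paracompact base admits a reduction to any maximal compact subgroup. Since $H_\phi \in \fa \subset \fs$, the algebra $\fn^0_{H_\phi}$ is $\zeta$-invariant, and $Z_\phi$ inherits a Cartan-type decomposition $Z_\phi = K_\phi \exp(\fs \cap \fn^0_{H_\phi})$; in particular $K_\phi = Z_\phi \cap K$ is a maximal compact subgroup of $Z_\phi$ and $Z_\phi / K_\phi$ is diffeomorphic to a Euclidean space. The associated bundle $Q_\phi \times_{Z_\phi} (Z_\phi / K_\phi) \to X$ therefore has contractible fibre, admits a continuous global section by the usual obstruction-theoretic argument, and the preimage of this section inside $Q_\phi$ is the desired $K_\phi$-reduction $R_\phi$.

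For the fibre descriptions of the Morse sets I would combine Theorem \ref{thm_morsesets}(i) with the identity $\fix_\Theta(H_\phi, w) = Z_\phi \cdot wb_\Theta = K_\phi \cdot wb_\Theta$ recalled in Section \ref{sec_invcs}. For any $q \in Q_\phi$ one has $h_\phi(q) = H_\phi$, so the fibre of $\MC_\Theta(w)$ over $\pi(q)$ is $q \cdot Z_\phi \cdot wb_\Theta$; as $q$ ranges over $Q_\phi$ this yields the first equality. Writing $Q_\phi = R_\phi \cdot Z_\phi$ and using $Z_\phi \cdot wb_\Theta = K_\phi \cdot wb_\Theta$ gives the second. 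The main obstacle will be the local-triviality verification in the first paragraph: it requires carefully relating continuous local sections of $Q$ and of $G \to G/Z_\phi$ through $h_\phi$, but reduces to the well-known local triviality of the latter quotient. The $K_\phi$-reduction step and the Morse set computation are then essentially formal.
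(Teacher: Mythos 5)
The paper does not actually prove this proposition: it is stated as a direct citation of \cite[Prop.~5.4]{SMLS} and no argument is given, so there is no internal proof against which to compare your write-up. On its own merits, your reconstruction is correct and is the natural route. Your local-triviality argument for $Q_\phi$ is sound: given a local section $s$ of $Q$ and a local section $\tau$ of the principal $Z_\phi$-bundle $G\to G/Z_\phi\cong\Ad(G)H_\phi$, the equivariance relation yields $h_\phi\bigl(s(x)\cdot\tau(Y)\bigr)=\Ad\bigl(\tau(Y)\bigr)^{-1}Y=H_\phi$ for $Y=h_\phi(s(x))$, and the continuity of $h_\phi$ from Theorem~\ref{thm_morsesets}(i) makes the assembled map a continuous local section of $Q_\phi$. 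The final identification of $\MC_\Theta(w)$ is also exactly right, combining $\fix_\Theta(H_\phi,w)=Z_\phi\cdot wb_\Theta=K_\phi\cdot wb_\Theta$, the fact that the $Q_\phi$-fiber over $\pi(q)$ equals $q\cdot Z_\phi$, and $Q_\phi=R_\phi\cdot Z_\phi$.

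The one step I would tighten is the $K_\phi$-reduction. Appealing to ``the usual obstruction-theoretic argument'' is slightly loose here, because $X$ is only a compact metric space and is not assumed to carry a CW structure, which is what classical obstruction theory requires. The cleaner statement to invoke is Steenrod's theorem that any principal bundle with Lie structural group over a paracompact base reduces to a maximal compact subgroup: since $Z_\phi/K_\phi\cong\fs\cap\fn^0_{H_\phi}$ is a complete simply connected manifold of nonpositive curvature, local sections of $Q_\phi/K_\phi\to X$ can be glued by fiberwise barycentric averaging with a partition of unity, with no CW hypothesis. As $X$ is compact metric, hence paracompact, this applies and produces the reduction $R_\phi$.
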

	
In the following, we recall some results from \cite{ASM,ASM1}.%

Fix a Cartan and an Iwasawa decomposition $G = KS$ and $G = KAN^+$, respectively. There exists a $K$-reduction $R\subset Q$, i.e., a subbundle with structural group $K$. Cartan and Iwasawa decompositions of $Q$ are given, respectively, by $Q=R\cdot S$ and $Q = R \cdot AN^+$, and we can write each $q\in Q$ in a unique way as $q=r'\cdot s$ and $q = r \cdot hn$ with $r,r'\in R$, $h\in A$ and $n\in N^+$. We denote by $\rmR:Q \rightarrow R$, $\rmS:Q \rightarrow S$ and $\rmA:Q \rightarrow A$ the corresponding (continuous) projections from $Q$ to $R$, $S$ and $A$, respectively. The exponential map of $G$ maps $\fa$ bijectively onto $A$. Writing $\log$ for the inverse of $\exp_{|\fa}$, we define $\rma(q) := \log \rmA(q)$, $\rma:Q\rightarrow \fa$. Then also%
\begin{equation*}
	\phi_n^R:R \rightarrow R,\quad \phi_n^R(r) = \rmR(\phi_n(r)),%
\end{equation*}
is a flow, and a continuous additive cocycle over $\phi_n^R$ is given by%
\begin{equation*}
	\rma^{\phi}:\Z \tm R \rightarrow \fa,\quad \rma^{\phi}(n,r) := \rma(\phi_n(r)).%
\end{equation*}
In the following, by abuse of notation, we only write $\rma$ for $\rma^{\phi}$. Then $\rma$ induces a cocycle over the flow on the maximal flag bundle $\E = Q \tm_G \F = R \tm_K \F$ by $\rma(n,\xi) := \rma(n,r)$, $\xi = r \cdot b_0$.  The $\fa$-\emph{Lyapunov exponent} of $\phi_n$ in the direction of $\xi\in\E$ is defined by%
\begin{equation*}
  \lambda(\xi) := \lim_{n\rightarrow+\infty}\frac{1}{n}\,\rma(n,\xi)\in\fa, \quad \xi\in\E,%
\end{equation*}
when the limit exists. By the polar decomposition $G = K\cl(A^+)K$, we obtain the map $\rmA^+:Q\rightarrow \cl A^+$, defined by $\rmS(q) = k\rmA^+(q)k^{-1}$ with $k\in K$. We define the \emph{polar exponent} by%
\begin{equation*}
  \lambda^+(r) := \lim_{n\rightarrow+\infty}\frac{1}{n}\,\log \rmA^+(\phi_n(\xi))\in\cl\fa^+, \quad r\in R,%
\end{equation*}
when the limit exists. It turns out that $\lambda^+(r)$ is constant along the fibers and so we only write $\lambda^+(x)$, $x\in X$, and denote by $X_{\reg}$ the set of points for which this limit exists, called the \emph{set of regular points}. The next result from \cite{ASM} (see also \cite{ASM1}) assures the existence of the above limits.%
	
\begin{theorem}\label{Teo}
Let $\nu$ be an invariant Borel probability measure on $X$. Then the polar exponent $\lambda^+(x)$ exists for all $x$ in a set $\Omega\subset X$ of full measure, invariant under the base flow on $X$. Put $\E_{\Omega} := \pi_{\E}^{-1}(\Omega)$, where $\pi_{\E}:\E\rightarrow X$ is the projection. Then

\begin{enumerate}
\item[(i)] $\lambda(\xi)$ exists for every $\xi\in \E_{\Omega}$ and the map $\lambda:\E_{\Omega}\rightarrow\fa$ assume values in the finite set $\{w\lambda^+(x) : w\in\WC\}$, $x=\pi_{\E}(\xi)$;
\item[(ii)] The map $D:R_{\Omega}:=\pi^{-1}(\Omega)\cap R\rightarrow\fs$ given by%
\begin{equation*}
  D(r) := \lim_{n\rightarrow+\infty}\frac{1}{n}\log \rmS(r),%
\end{equation*}
satisfies:%
\begin{itemize}
\item $D$ is equivariant, that is $D(r\cdot k)=\Ad(k^{-1})D(r)$ for any $k\in K$;
\item $D(r) = \Ad(u_r)\lambda^+(\pi(r))$ for some $u_r\in K$;%
\item $\lambda(\xi) = w^{-1}\lambda^+(\pi(\xi))$ for any $\xi\in r\cdot\st(D(r),w)$.
\end{itemize}
\end{enumerate}

If $\nu$ is ergodic, then $\lambda^+(x) =: \lambda^+(\nu)$ is constant on $\Omega$.%
\end{theorem}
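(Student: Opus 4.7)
The plan is to derive this multiplicative ergodic-type result by combining Kingman's subadditive ergodic theorem with the geometry of the Cartan and Iwasawa decompositions, essentially following the template of Oseledets' theorem but respecting the Lie-theoretic structure.

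First I would establish existence of the polar exponent. The key observation is that, composed with any fundamental weight (or more generally any linear functional that is monotone on $\cl\fa^+$ under the Cartan projection), the function $n\mapsto \log \rmA^+(\phi_n(q))$ is subadditive along orbits: this follows from the standard subadditivity of the Cartan projection $\mu:G\to\cl\fa^+$, namely $\mu(g_1g_2)\preceq \mu(g_1)+\mu(g_2)$ in the dominance order. Applying Kingman's theorem component-by-component to this subadditive cocycle over the base flow on $(X,\nu)$ yields a full-measure invariant set $\Omega$ on which the vectorial limit $\lambda^+(x)\in\cl\fa^+$ exists. Fiber-constancy is automatic since $\rmA^+(q\cdot k)=\rmA^+(q)$ for all $k\in K$, so $\lambda^+$ descends to $X$.

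For part (ii), the existence of $D(r)$ is more delicate because $\rmS(q)=k\rmA^+(q)k^{-1}$ carries an extra $K$-factor that Kingman alone does not control. I would argue that the self-adjoint operator $\rmS(\phi_n(r))$ has Oseledets-type eigenspaces that converge as $n\to\infty$: passing to a convergent subsequence in $K$ and using uniqueness of the limit of $\tfrac{1}{n}\log\rmS$ in $\fs$ modulo the centralizer of $\lambda^+(\pi(r))$, one gets a well-defined $D(r)\in\Ad(K)\lambda^+(\pi(r))$. Equivariance $D(r\cdot k)=\Ad(k^{-1})D(r)$ is immediate from the transformation rule $\rmS(q\cdot k)=\Ad(k^{-1})\rmS(q)$, and writing $D(r)=\Ad(u_r)\lambda^+(\pi(r))$ is then a restatement.

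For part (i), the identification $\lambda(\xi)\in\{w\lambda^+(x):w\in\WC\}$ and the precise formula $\lambda(\xi)=w^{-1}\lambda^+(\pi(\xi))$ on $r\cdot\st(D(r),w)$ comes from pushing the Iwasawa cocycle $\rma$ through the Bruhat decomposition of $\F$ relative to the split element $D(r)$. For $\xi$ in the stable cell $\st(D(r),w)$, the dynamics of $\phi_n$ contract $\xi$ toward the fixed component $\fix(D(r),w)$; on that fixed component, the Iwasawa $\fa$-component of the flow acts by $w^{-1}\lambda^+(\pi(r))$ because conjugating $\cl\fa^+$ by a Weyl representative $w$ permutes the chambers accordingly. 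Since the stable cells partition $\F$ up to a lower-dimensional set and the cocycle $\rma$ is continuous, the limit $\lambda(\xi)$ exists and equals $w^{-1}\lambda^+(\pi(\xi))$ throughout $\E_\Omega$. Finiteness of the range then follows from $|\WC|<\infty$, and the ergodic case is the standard consequence that $\lambda^+$, being a measurable $\theta$-invariant function, is $\nu$-a.e. constant.

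The hardest step is the convergence of the $K$-part in $\rmS(\phi_n(r))=k_n\exp(\rmA^+(\phi_n(r)))k_n^{-1}$, i.e., showing that $D(r)$ is a genuine limit in $\fs$ and not only along subsequences. This is essentially an Oseledets block-diagonalization argument transported to the $KAK$ decomposition, and it is where the references \cite{ASM,ASM1} invest the main technical work; everything else (subadditivity, Weyl symmetry, Bruhat partition) is a bookkeeping reduction to this convergence.
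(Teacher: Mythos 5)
The paper does not prove Theorem~\ref{Teo}: it is imported verbatim from \cite{ASM} (with \cite{ASM1} as a complement), as signaled by the sentence ``The next result from \cite{ASM} (see also \cite{ASM1}) assures the existence of the above limits.'' So there is no in-paper argument to compare your sketch against. With that caveat, your outline is a faithful high-level reconstruction of the standard strategy in those references: Kingman's subadditive ergodic theorem applied through the subadditivity of the Cartan projection to get the polar exponent $\lambda^+$; an Oseledets-type block argument in the $KAK$ decomposition to upgrade convergence of $\frac{1}{n}\log\rmA^+(\phi_n(r))$ to convergence of $\frac{1}{n}\log\rmS(\phi_n(r))$ and thus define $D(r)$; and the Bruhat decomposition relative to $D(r)$ to read off $\lambda(\xi) = w^{-1}\lambda^+(\pi(\xi))$ on the stable cell $\st(D(r),w)$. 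You also correctly single out the convergence of the $K$-part of $\rmS(\phi_n(r))$ as the genuinely hard technical step.

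Two small imprecisions are worth flagging. First, the Bruhat cells $\st(D(r),w)$ partition the entire flag $\F$ (as a disjoint union over $\WC_{\Theta(D(r))}\backslash\WC/\WC_\Theta$), not merely up to a lower-dimensional set, so the phrase ``up to a lower-dimensional set'' is unnecessary and slightly off. Second, for a point $\xi$ not on a fixed-point component, the existence of $\lambda(\xi)$ does not follow from continuity of $\rma$ alone: one must actually run the contraction argument (the orbit of $\xi$ under the gradient-like flow of $\exp(t D(r))$ converges to $\fix(D(r),w)$, and the cocycle's $\fa$-component along the orbit asymptotically matches that of the limit) and control the error terms uniformly in $n$. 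That step is nontrivial and is part of what \cite{ASM,ASM1} establish. As a blind reconstruction the plan is sound, but the parts you label as ``bookkeeping'' still require careful estimates to be turned into a complete proof.
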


For any $w\in\WC$ and $x\in X$, we write $\st(x,w) := r\cdot \st(D(r),w)$, where $r\in R_{\Omega}$ is an arbitrary element of the fiber over $x\in\Omega$, and put%
\begin{equation*}
  \st^{\nu}(w) := \bigcup_{x\in \Omega} \st(x,w).%
\end{equation*}
Each such set is invariant, measurable and given by%
\begin{equation*}
  \st^{\nu}(w) = \{\xi\in E_{\Omega} : \lambda(\xi) = w^{-1}\lambda^+(\pi_{\E}(\xi))\}.%
\end{equation*}

Each Morse component $\MC(w)$ of the induced flow on $\E$ has a \emph{Morse spectrum} $\Lambda_{\Mo}(\MC(w),\rma)$ which is a compact convex subset of $\fa$ that coincides with the convex hull of%
\begin{equation*}
  \Lambda_{\Ly}(\MC(w),\rma) = \{\lambda(\xi) : \;\xi\in\MC(w),\ \lambda(\xi) \mbox{ exists}\}.%
\end{equation*}
The spectrum of the attractor Morse component $\Lambda_{\Mo}(\MC^+,\rma)$ is the only Morse spectrum meeting $\cl\fa^+$ and we have the following result.%

\begin{proposition}\label{prop_spectralprops}
\begin{itemize}
\item[(i)] $\Lambda_{\Mo}(\MC^+, \rma)$ is invariant under $\WC_{\Theta(\phi)}$ and%
\begin{equation*}
  \Lambda_{\Ly}(\MC^+, \rma)=\bigcup_{x\in X_{\mathrm{reg}}}\WC_{\Theta(\phi)}\lambda^+(x);%
\end{equation*}
\item[(ii)] $\alpha(\lambda)>0$ if $\alpha\in\Pi^+\setminus\langle\Theta(\phi)\rangle$ and $\lambda\in \Lambda_{\Mo}(\MC^+,\rma)$.%
\end{itemize}
\end{proposition}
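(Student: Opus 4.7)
For part (i), I would establish the pointwise equality
\[
\Lambda_{\Ly}(\MC^+, \rma) = \bigcup_{x \in X_{\reg}} \WC_{\Theta(\phi)} \lambda^+(x),
\]
from which $\WC_{\Theta(\phi)}$-invariance of $\Lambda_{\Mo}(\MC^+, \rma)$ follows automatically, since the convex hull of a Weyl-invariant set remains invariant under the linear Weyl action. For the inclusion $\subseteq$, pick $\xi \in \MC^+ \cap \E_{\Omega}$ over $x = \pi_\E(\xi) \in \Omega$ and write $\xi = r \cdot b$ with $r \in R_{\Omega}$, $b \in \fix_\Theta(h_\phi(r), 1)$ by Theorem \ref{thm_morsesets}. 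Theorem \ref{Teo}(ii) provides $D(r) = \Ad(u_r)\lambda^+(x)$ for some $u_r \in K$, so $\Theta(D(r)) = \Theta(\lambda^+(x))$. Using the regularity inclusion $\Theta(\lambda^+(x)) \subseteq \Theta(\phi)$ (discussed below), the orbit version of Lemma \ref{equalityfix} applied with $H_1 = D(r)$ and $H_2 = h_\phi(r)$ forces $b \in \st_\Theta(D(r), s)$ for some $s \in \WC_{\Theta(\phi)}$, and Theorem \ref{Teo}(ii) then yields $\lambda(\xi) = s^{-1}\lambda^+(x) \in \WC_{\Theta(\phi)}\lambda^+(x)$. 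For the reverse inclusion, note that $s \in \WC_{\Theta(\phi)} = \WC_{H_\phi}$ is represented by an element of the normalizer of $\fa$ in $K_{H_\phi}$, so together with $K_{D(r)} \subseteq K_{h_\phi(r)}$ we get $\fix_\Theta(D(r), s) \subseteq \fix_\Theta(h_\phi(r), 1)$; any $b$ in this non-empty set lies automatically in $\st_\Theta(D(r), s)$, so $\xi = r \cdot b$ realizes the exponent $s^{-1}\lambda^+(x)$.

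For part (ii), I would leverage the formula from (i). A simple reflection $s_\beta$ with $\beta \in \Theta(\phi)$ satisfies $s_\beta \alpha - \alpha \in \Z \beta \subseteq \langle \Theta(\phi) \rangle$ and fixes $\Pi^+ \setminus \{\beta\}$ setwise; hence $\WC_{\Theta(\phi)}$ permutes $\Pi^+ \setminus \langle\Theta(\phi)\rangle$ among itself. Therefore $\alpha(w\lambda^+(x)) = (w^{-1}\alpha)(\lambda^+(x))$ with $w^{-1}\alpha \in \Pi^+ \setminus \langle\Theta(\phi)\rangle$, and the problem reduces to the strict positivity of $\alpha(\lambda^+(x))$ for all $x \in X_{\reg}$ and $\alpha \in \Pi^+ \setminus \langle\Theta(\phi)\rangle$. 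Writing $\alpha = \sum_{\beta \in \Sigma} c_\beta \beta$ with all $c_\beta \geq 0$ and some $c_{\beta_0} > 0$ for $\beta_0 \in \Sigma \setminus \Theta(\phi)$, the combination of $\lambda^+(x) \in \cl\fa^+$ and $\Theta(\lambda^+(x)) \subseteq \Theta(\phi)$ makes every summand non-negative and the $\beta_0$-term strictly positive. Since strict positivity survives convex combinations, the bound propagates from $\Lambda_{\Ly}(\MC^+, \rma)$ to its convex hull $\Lambda_{\Mo}(\MC^+, \rma)$.

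The main obstacle is the regularity inclusion $\Theta(\lambda^+(x)) \subseteq \Theta(\phi)$ for every $x \in X_{\reg}$, on which both directions of (i) and the positivity argument in (ii) ultimately rest. This is not a purely algebraic identity; it encodes the defining property of the flag type $H_\phi$ as the coarsest regularity compatible with the chain recurrent structure of $\phi$, and every asymptotic exponent on a regular point must refine this regularity. I would import this from the block-reduction theory of \cite{ASM,ASM1} rather than reprove it, since attempting to derive it afresh would take us outside the scope of the preliminary section.
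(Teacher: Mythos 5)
The paper does not prove this proposition at all: it is stated as a recalled result from the references \cite{SMLS}, \cite{ASM}, \cite{ASM1} (in the same spirit as Theorem~\ref{thm_morsesets}, Proposition~\ref{prop_morsesets} and Theorem~\ref{Teo}, which carry explicit citations). So there is no paper proof to compare against, and your attempt is a reconstruction. On its own terms, though, it has two gaps worth flagging.

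First, in part~(i) you apply ``the orbit version of Lemma~\ref{equalityfix} with $H_1 = D(r)$, $H_2 = h_\phi(r)$.'' Lemma~\ref{equalityfix} as stated requires $H_1,H_2\in\cl\fa^+$, while $D(r)$ and $h_\phi(r)$ are general split elements, a priori lying in \emph{different} $K$-orbits of $\cl\fa^+$ with unrelated conjugating elements $u_r,v\in K$. Passing to an ``orbit version'' needs a \emph{common} $v\in K$ with $\Ad(v)\lambda^+(x)=D(r)$ and $\Ad(v)H_\phi=h_\phi(r)$ simultaneously. That coherence is exactly what the first paragraph of the paper's later Proposition~\ref{MorseStable} establishes, and it is obtained there from the fixed-point inclusion \eqref{eq_fixedpoints_incl}, which is itself imported from \cite[Prop.~6.4]{ASM1}. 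You never invoke \eqref{eq_fixedpoints_incl}, so both the forward inclusion (via the ``orbit'' lemma) and the reverse inclusion (via the unproved claim $K_{D(r)}\subseteq K_{h_\phi(r)}$, which again presupposes a common conjugator) are ungrounded. Worse, since Proposition~\ref{MorseStable} in the paper itself uses Proposition~\ref{prop_spectralprops}(ii) to get $\Theta(\lambda^+(u))\subset\Theta(\phi)$, borrowing that argument here would be circular.

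Second, part~(ii): you reduce the claim to the regularity inclusion $\Theta(\lambda^+(x))\subseteq\Theta(\phi)$ and then import it. But for $\lambda^+(x)\in\cl\fa^+$, the inclusion $\Theta(\lambda^+(x))\subseteq\Theta(\phi)$ \emph{is} the statement $\alpha(\lambda^+(x))>0$ for $\alpha\in\Pi^+\setminus\langle\Theta(\phi)\rangle$, i.e.\ part~(ii) restricted to the Lyapunov points. You are honest about the import, which is fair, but the Weyl-permutation step and the convex-hull propagation are then only decorations around the imported core; the one-line reduction ``since strict positivity survives convex combinations'' is fine as stated (any point of the convex hull is a finite convex combination of Lyapunov points, and $\Lambda_{\Mo}^+$ is by definition the convex hull, not merely its closure), so that part is sound. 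The net effect is that the substance of both (i) and (ii) ultimately rests on the same imported regularity fact plus the unstated coherence of conjugators, which is precisely the content the original references supply.
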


Moreover, from \cite[Prop.~6.4]{ASM1} we know that%
\begin{equation}\label{eq_fixedpoints_incl}
  \fix_{\Theta}(D(r),w) \subset \fix_{\Theta}(h_{\phi}(r),w) \mbox{\quad for any } r \in R_{\Omega} \mbox{ and } w\in\WC.%
\end{equation}
From now on we will use the notation%
\begin{equation*}
  \st^{\nu}_{\Theta}(w) := \pi_{\Theta}(\st^{\nu}(w)),\quad \Lambda_{\Ly}^+ := \Lambda_{\Ly}(\MC^+,\rma),\quad \Lambda_{\Mo}^+ := \Lambda_{\Mo}(\MC^+,\rma).%
\end{equation*}
	
\subsection{Invariant systems on flag manifolds}
	
An invariant control system on $G$ is a control-affine system%
\begin{equation}\label{invG}
	\dot{x}(t) = X_0(x(t)) + \sum_{i=1}^mu_i(t)X_i(x(t)),\quad u\in\UC,%
\end{equation}
where the $X_i$ are right-invariant vector fields. We write $(t,g,u) \mapsto \varphi(t,g,u)$ for the transition map of this system and note that by right-invariance we have%
\begin{equation*}
  \varphi(t,gh,u) = \varphi(t,g,u)h \mbox{\quad for all\ } g,h \in G,\ t\in\R,\ u \in \UC.%
\end{equation*}
Since $\UC \times G\rightarrow\UC$ is a (trivial) principal bundle, we can apply the theory of the preceding subsection to the control flow of \eqref{invG} to characterize the Morse components of the control flow of the induced system%
\begin{equation}\label{inv}
  \Sigma_{\Theta}:\quad \dot{x}(t) = f_0(x(t)) + \sum_{i=1}^mu_i(t)f_i(x(t)),\quad u\in\UC,%
\end{equation}
on the flag manifold $\F_{\Theta}$, where for $i=0,\ldots,m$ we have $f_i(\pi_{\Theta}(g)) = (\rmd\pi_{\Theta})_gX_i(g)$ with the canonical projection $\pi_{\Theta}:G\rightarrow \F_{\Theta}$. For simplicity, we also write $\varphi$ for the transition map of the system on $\F_{\Theta}$. It will become clear from the context which transition map is considered.%

By the theory outlined in the preceding subsection, the chain control sets on $\F_{\Theta}$ are given by%
\begin{equation*}
  E_{\Theta}(w) = \pi^{\Theta}_2(\MC_{\Theta}(w)) = \bigcup_{u\in\UC}\fix_{\Theta}(\mathsf{h}(u),w),\quad w\in\WC,%
\end{equation*}
where $\pi^{\Theta}_2$ is the projection onto the second component of $\UC\times\F_{\Theta}$ and $\mathsf{h}(u) := h_{\phi}(u,1)$ (see \cite{DS2}). Moreover, for any $(u,x)\in\MC_{\Theta}(w)$ we have a decomposition%
\begin{equation*}
	T_x\F_{\Theta} = \EC^-_{\Theta,w}(u,x) \oplus \EC^0_{\Theta,w}(u,x) \oplus \EC^+_{\Theta,w}(u,x),%
\end{equation*}
where $\EC^i_{\Theta,w}(u,x) := \fn^i_{\mathsf{h}(u)}\cdot x$ and
\begin{itemize}
\item[1.] $(\rmd\varphi_{t,u})_x\EC^i_{\Theta,w}(u,x) = \EC^i_{\Theta,w}(\phi_t(u,x))$ for all $t\in\R$;%
\item[2.] There exist $C,\zeta>0$ such that for all $t\geq 0$,%
\begin{align}\label{eq_su_estimates}
\begin{split}
  |(\rmd\varphi_{t,u})_xV| &\leq C\rme^{-\zeta t}|V| \;\;\;\mbox{ for any }V\in \EC^-_{\Theta, w}(u,x),\\
	|(\rmd\varphi_{t,u})_xV| &\geq C^{-1}\rme^{\zeta t}|V| \;\;\;\mbox{ for any }V\in \EC^+_{\Theta,w}(u,x).%
\end{split}
\end{align}
\end{itemize}
	
Our aim is to relate the $\fa$-Lyapunov exponents to the usual Lyapunov exponents of the system \eqref{inv} and use this relation to study the asymptotic behavior on the center bundle $\EC_{\Theta,w}^0$. Let us now fix an invariant probability measure $\nu$ on the Borelians of $\UC$ and let $\Omega$ be the $\theta$-invariant set in $\UC$ of full measure given by Theorem \ref{Teo}.%
	
For any $u\in\Omega$ let us consider $D(u) := D(u,1)$. By the equivariance of $D$ we have $D(u,k) = \Ad(k^{-1})D(u)$, and therefore%
\begin{equation*}
  \st_{\Theta}^{\nu}(w) = \bigcup_{u\in \Omega} (u,k) \cdot \st_{\Theta}(D(u,k),w) = \bigcup_{u\in \Omega} (u,1) \cdot \st_{\Theta}(D(u),w) = \bigcup_{u\in \Omega} \{u\} \times \st_{\Theta}(D(u),w).%
\end{equation*}
 
Let us consider%
\begin{equation*}
  \MC^{\nu}_{\Theta}(w) := \bigcup_{u\in\Omega}\{u\} \tm \fix_{\Theta}(\mathsf{h}(u),w).%
\end{equation*}
	
\begin{proposition}\label{MorseStable}
For any $u\in\Omega$ there exists $k\in K$ such that $\mathsf{h}(u)=\Ad(k)H_{\phi}$ and $D(u)=\Ad(k)\lambda^+(u)$. Moreover, for any $\theta_1$-invariant measure $\nu$ on $\UC$ it holds that%
\begin{equation*}
  \MC_{\Theta}^{\nu}(w) \subset \bigcup_{s\in\WC_{\Theta(\phi)}}\st_{\Theta}^{\nu}(sw).%
\end{equation*}
\end{proposition}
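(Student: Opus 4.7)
The plan is to prove the two assertions in order, deriving the second from the first by a direct computation using Lemma~\ref{equalityfix}.

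For the first assertion, I would start with the element $u_r\in K$ supplied by Theorem~\ref{Teo}(ii) applied at $r=(u,1)\in R_\Omega$, which yields $D(u)=\Ad(u_r)\lambda^+(u)$. The goal is to show that \emph{this same} $u_r$ also satisfies $\mathsf{h}(u)=\Ad(u_r)H_\phi$, so that $k:=u_r$ works. Both $\mathsf{h}(u)$ and $\Ad(u_r)H_\phi$ lie in $\Ad(G)H_\phi$. Moreover, Proposition~\ref{prop_spectralprops}(ii) (together with $\lambda^+(u)\in\cl\fa^+$) forces $\Theta(\lambda^+(u))\subseteq\Theta(\phi)$, hence $Z_{\lambda^+(u)}\subseteq Z_{H_\phi}$ and therefore $\fix_\Theta(\lambda^+(u),w)\subseteq\fix_\Theta(H_\phi,w)$ for all $w,\Theta$. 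Acting by $u_r$ gives $\fix_\Theta(D(u),w)\subseteq\fix_\Theta(\Ad(u_r)H_\phi,w)$, while \eqref{eq_fixedpoints_incl} provides $\fix_\Theta(D(u),w)\subseteq\fix_\Theta(\mathsf{h}(u),w)$. So both candidates dominate $\fix_\Theta(D(u),w)$ on every flag manifold; I would then upgrade this to equality by invoking the block reduction of Proposition~\ref{prop_morsesets}, identifying $(u,u_r)$ with a point of the $K_\phi$-reduction $R_\phi\subseteq Q_\phi$ arising from the Cartan/polar decomposition, which forces $h_\phi(u,u_r)=H_\phi$ and hence, by equivariance, $\mathsf{h}(u)=\Ad(u_r)H_\phi$. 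The compatibility of the polar decomposition limit with the block-reduction construction (essentially the content of \cite{ASM1}) is the technical heart of this step, and I would expect it to be the main obstacle in the proof.

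Once the first assertion is secured, the inclusion is almost formal. Take $(u,x)\in\MC_\Theta^\nu(w)$, so $u\in\Omega$ and $x\in\fix_\Theta(\mathsf{h}(u),w)$. Choose $k\in K$ as in the first part. Using the identity $\fix_\Theta(\Ad(k)Y,w)=k\cdot\fix_\Theta(Y,w)$ recalled in Section~\ref{sec_invcs}, we obtain $k^{-1}\cdot x\in\fix_\Theta(H_\phi,w)$. Because $\Theta(\lambda^+(u))\subseteq\Theta(\phi)$, Lemma~\ref{equalityfix} applied with $H_1=\lambda^+(u)$, $H_2=H_\phi$, $\Theta_1=\Theta(\lambda^+(u))$, $\Theta_2=\Theta(\phi)$ gives
\[
\fix_\Theta(H_\phi,w)\;\subseteq\;\bigcup_{s\in\WC_{\Theta(\phi)}}\st_\Theta(\lambda^+(u),sw).
\]
Hence $k^{-1}\cdot x\in\st_\Theta(\lambda^+(u),sw)$ for some $s\in\WC_{\Theta(\phi)}$. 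Translating back by $k$ via the analogous identity $\st_\Theta(\Ad(k)Y,w)=k\cdot\st_\Theta(Y,w)$ gives $x\in\st_\Theta(D(u),sw)$, so
\[
(u,x)\;\in\;\{u\}\times\st_\Theta(D(u),sw)\;\subseteq\;\st_\Theta^\nu(sw)\;\subseteq\;\bigcup_{s'\in\WC_{\Theta(\phi)}}\st_\Theta^\nu(s'w),
\]
which finishes the proof. In summary, the bulk of the work is the first part (matching the $K$-element of the Cartan projection with the one trivializing $\mathsf{h}(u)$), and the second part is a direct application of Lemma~\ref{equalityfix} plus the $K$-equivariance of $\fix_\Theta$ and $\st_\Theta$.
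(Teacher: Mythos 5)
Your second part matches the paper's almost verbatim: once $k$ is chosen so that $\mathsf{h}(u)=\Ad(k)H_\phi$ and $D(u)=\Ad(k)\lambda^+(u)$, you correctly transport the problem to the model chamber and invoke Lemma~\ref{equalityfix} with $\Theta_1=\Theta(\lambda^+(u))\subset\Theta_2=\Theta(\phi)$ (the inclusion coming from Proposition~\ref{prop_spectralprops}(ii)); that is exactly how the paper derives the final containment.

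The gap is in the first part. You correctly guess that the Cartan element $u_r$ from Theorem~\ref{Teo}(ii) is the right candidate, but the argument you sketch for $\mathsf{h}(u)=\Ad(u_r)H_\phi$ is left hanging. Noting that both $\fix_\Theta(\Ad(u_r)H_\phi,w)$ and $\fix_\Theta(\mathsf{h}(u),w)$ contain $\fix_\Theta(D(u),w)$ does not by itself force the two to coincide, and the step you describe as the ``technical heart'' --- showing $(u,u_r)\in R_\phi$ so that $h_\phi(u,u_r)=H_\phi$ --- is, as stated, equivalent (via equivariance, $h_\phi(u,u_r)=\Ad(u_r^{-1})\mathsf{h}(u)$) to the very identity you are trying to prove; you would need an independent input from the block-reduction construction to make it go, and none is supplied. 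The paper instead closes the gap with a short, concrete computation on the maximal flag at $w=1$: pick $v_1,v_2\in K$ with $\mathsf{h}(u)=\Ad(v_1)H_\phi$ and $D(u)=\Ad(v_2)\lambda^+(u)$; the inclusion \eqref{eq_fixedpoints_incl} for $w=1$ on $\F$ reads $v_2K_{\lambda^+(u)}\cdot b_0\subset v_1K_{H_\phi}\cdot b_0$; evaluating at $v_2\cdot b_0$ produces $l\in K_{H_\phi}$, $m\in M$ with $v_2=v_1lm$, and since $lm\in K_{H_\phi}$ this gives $\Ad(v_2)H_\phi=\Ad(v_1)H_\phi=\mathsf{h}(u)$, so $k=v_2$ works. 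You should replace the block-reduction appeal by this direct manipulation.
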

	
\begin{proof} 
Let $v_1,v_2\in K$ such that $\Ad(v_1)H_{\phi} = \mathsf{h}(u)$ (see Theorem \ref{thm_morsesets}) and $\Ad(v_2)\lambda^+(u) = D(u)$ (see Theorem \ref{Teo}). Since $\fix(D(u),1) \subset \fix(\mathsf{h}(u),1)$, by \eqref{eq_fixedpoints_incl} we have
$v_1 K_{\lambda^+(u)}\cdot b_0 \subset v_2K_{H_{\phi}} \cdot b_0$ and thus $v_1\cdot b_0 = v_2\cdot lb_0$ for some $l\in K_{H_{\phi}}$. Therefore, $v_1^{-1}v_2l\in P \cap K = M$ implying that $v_1=v_2lm$ for some $m\in M$. Since $lm\in K_{\Theta(\phi)}$, by considering $v=v_1(lm)^{-1}$ we obtain%
\begin{equation*}
  \Ad(v)\lambda^+(u)=\Ad(v_2)\lambda^+(u)=D(u) \;\;\;\mbox{ and }\;\;\;\Ad(v)H_{\phi}=\Ad(v_1(lm)^{-1})H_{\phi}=\Ad(v_1)H_{\phi}=\mathsf{h}(u).%
\end{equation*}
Therefore,%
\begin{equation*}
  \fix_{\Theta}(\mathsf{h}(u), w)=v\cdot \fix_{\Theta}(H_{\phi}, w)\;\;\mbox{ and }\;\;\st_{\Theta}(D(u), w)=v\cdot \st_{\Theta}(\lambda^+(u),w)%
\end{equation*}
and since $\Theta(\lambda^+(u))\subset\Theta(\phi)$ (following from Proposition \ref{prop_spectralprops}(ii)), Lemma \ref{equalityfix} implies%
\begin{equation*}
  \fix_{\Theta}(H_{\phi}, w)\subset \bigcup_{s\in\WC_{\Theta(\phi)}}\st_{\Theta}(\lambda^+(u), sw)\;\;\;\;\;\mbox{ and so }\;\;\;\;\; \fix_{\Theta}(\mathsf{h}(u), w)\subset \bigcup_{s\in\WC_{\Theta(\phi)}}\st_{\Theta}(D(u),sw).%
\end{equation*}
Hence,%
\begin{equation*}
  \MC_{\Theta}^{\nu}(w) = \bigcup_{u\in \Omega}\{u\} \tm \fix_{\Theta}(\mathsf{h}(u), w)\subset \bigcup_{u\in\Omega}\bigcup_{s\in\WC_{\Theta(\phi)}}\{u\} \tm \st_{\Theta}(D(u),sw) = \bigcup_{s\in\WC_{\Theta(\phi)}}\st^{\nu}_{\Theta}(sw),%
\end{equation*}
as stated.
\end{proof}
	
We also have the following result.%
	
\begin{lemma}\label{technical}
Let $u\in\Omega$ and $x\in\fix_{\Theta}(\mathsf{h}(u),w)\cap\st_{\Theta}(D(u),sw)$ with $s\in\WC_{\Theta(\phi)}$. Then the following statements hold:%
\begin{enumerate}
\item[(i)] There exist $g\in Z_{\mathsf{h}(u)}\cap N^-_{D(u)}$ and $z\in\fix_{\Theta}(D(u),sw)$ such that $x = g\cdot z$;%
\item[(ii)] For any $V\in\EC^i_{\Theta,w}(u,x)$ there is $X\in \fn^i_{\mathsf{h}(u)}\cap\fg_z^{\perp}$ such that $V = (\rmd[g])_zX(z)$, where $i=+,0,-$.%
\end{enumerate}
\end{lemma}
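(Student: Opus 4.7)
Using Proposition~\ref{MorseStable} I pick $k\in K$ with $\mathsf{h}(u)=\Ad(k)H_{\phi}$ and $D(u)=\Ad(k)\lambda^+(u)$. Since $H_{\phi}$ and $\lambda^+(u)$ commute in $\fa$, so do $\mathsf{h}(u)$ and $D(u)$, and consequently $\exp(tD(u))$ commutes with $\exp(t\mathsf{h}(u))$ and preserves the closed set $\fix_{\Theta}(\mathsf{h}(u),w)$. For~(i), I then start from the Bruhat decomposition $x=g_0\cdot z_0$ with $g_0\in N^-_{D(u)}$ and $z_0\in\fix_{\Theta}(D(u),sw)$; the eigenvalue estimate for $\rme^{n\ad(D(u))}$ on $\fn^-_{D(u)}$ used inside the proof of Lemma~\ref{equalityfix} gives $\exp(nD(u))\cdot x\to z_0$, and invariance of $\fix_{\Theta}(\mathsf{h}(u),w)$ forces $z_0\in\fix_{\Theta}(\mathsf{h}(u),w)$.

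Next I use that $\fix_{\Theta}(\mathsf{h}(u),w)=Z_{\mathsf{h}(u)}\cdot wb_{\Theta}$ carries the structure of a flag manifold of the reductive group $Z_{\mathsf{h}(u)}$, on which $D(u)\in\fn^0_{\mathsf{h}(u)}=\mathrm{Lie}(Z_{\mathsf{h}(u)})$ acts as a split element. Applying the Bruhat decomposition of this smaller flag manifold with respect to $D(u)$---whose associated negative nilpotent subalgebra is $\fn^-_{D(u)}\cap\fn^0_{\mathsf{h}(u)}$---the stable manifold of $\fix_{\Theta}(D(u),sw)$ inside $\fix_{\Theta}(\mathsf{h}(u),w)$ equals $(N^-_{D(u)}\cap Z_{\mathsf{h}(u)})\cdot\fix_{\Theta}(D(u),sw)$. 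Since $x$ lies in this stable manifold (its $\exp(tD(u))$-orbit converges to $z_0$), statement~(i) follows. This is the main obstacle: one must justify the ``restricted'' Bruhat decomposition of $\fix_{\Theta}(\mathsf{h}(u),w)$ as a flag manifold of $Z_{\mathsf{h}(u)}$ so that the negative nilpotent becomes $N^-_{D(u)}\cap Z_{\mathsf{h}(u)}$ rather than the full $N^-_{D(u)}$; naive Lie-algebra level decompositions $g_0=n_1n_2$ with $n_2\in\exp(\fn^-_{D(u)}\cap\fn^0_{\mathsf{h}(u)})$ do not directly produce a $z$ in $\fix_{\Theta}(D(u),sw)$, which is why the sub-flag-manifold viewpoint is essential.

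For~(ii), I write $V=Y(x)$ with $Y\in\fn^i_{\mathsf{h}(u)}$ and use the identity $Y(g\cdot z)=(\rmd[g])_z(\Ad(g^{-1})Y)(z)$. Because $g\in Z_{\mathsf{h}(u)}$, the map $\Ad(g^{-1})$ commutes with $\ad(\mathsf{h}(u))$ and preserves $\fn^i_{\mathsf{h}(u)}$, so $Y':=\Ad(g^{-1})Y\in\fn^i_{\mathsf{h}(u)}$. By (the proof of) Lemma~\ref{auxiliar} applied with the split element $\mathsf{h}(u)$ at $z\in\fix_{\Theta}(\mathsf{h}(u),w)$, both $\fg_z$ and $\fg_z^{\perp}$ are $\ad(\mathsf{h}(u))$-invariant, and together with the $B_{\zeta}$-orthogonality of $\fg=\fn^-_{\mathsf{h}(u)}\oplus\fn^0_{\mathsf{h}(u)}\oplus\fn^+_{\mathsf{h}(u)}$ this yields $\fg_z^{\perp}=\bigoplus_i(\fg_z^{\perp}\cap\fn^i_{\mathsf{h}(u)})$. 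Taking $X$ to be the $B_{\zeta}$-orthogonal projection of $Y'$ onto $\fg_z^{\perp}$ then gives $X\in\fn^i_{\mathsf{h}(u)}\cap\fg_z^{\perp}$ with $X-Y'\in\fg_z$, so $X(z)=Y'(z)$ and $(\rmd[g])_zX(z)=Y(g\cdot z)=V$.
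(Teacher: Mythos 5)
Your treatment of item~(ii) matches the paper's up to phrasing. You transfer $V = Y(x)$ to $z$ via $\Ad(g^{-1})$ and take the orthogonal projection onto $\fg_z^{\perp}$; for the verification that the projection $X$ stays in $\fn^i_{\mathsf{h}(u)}$, the paper does an explicit root-space computation (splitting $\fn^i_{H_{\phi}}$ by eigenvalues of $\ad(H_w)$), while you appeal to the $\ad(\mathsf{h}(u))$-invariance of $\fg_z$ and $\fg_z^{\perp}$ from the proof of Lemma~\ref{auxiliar}. Your parenthetical ``(the proof of)'' is necessary and correct, since $\mathsf{h}(u)$ is generally not split-regular and so Lemma~\ref{auxiliar} as stated does not apply, but its proof does. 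These two verifications are equivalent.

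For item~(i), however, you depart from the paper and --- as you acknowledge --- leave a genuine gap: your argument rests on a ``restricted'' Bruhat decomposition of $\fix_{\Theta}(\mathsf{h}(u),w)$ regarded as a flag manifold of the reductive group $Z_{\mathsf{h}(u)}$, identifying its $D(u)$-stable manifold with $(N^-_{D(u)}\cap Z_{\mathsf{h}(u)})\cdot\fix_{\Theta}(D(u),sw)$. That fact is true but is not established anywhere in the paper and would need its own argument. More importantly, your dismissal of the direct group-level factorization as inadequate is not correct; it is exactly what the paper does, and it closes the gap you identify. Starting from $x=h\cdot z$ with $h\in N^-_{D(u)}$, $z\in\fix_{\Theta}(D(u),sw)$, the paper uses the semidirect-product decomposition $N^-=N^-_{\mathsf{h}(u)}\,N^-(\mathsf{h}(u))$ (stated in Section~3.1) to write $h=ag$ with $a\in N^-_{\mathsf{h}(u)}$, $g\in N^-(\mathsf{h}(u))$; since $\Theta(\lambda^+(u))\subset\Theta(\phi)$ gives $N^-_{\mathsf{h}(u)}\subset N^-_{D(u)}$, it follows that $g=a^{-1}h\in N^-_{D(u)}$, hence $g\in N^-(\mathsf{h}(u))\cap N^-_{D(u)}\subset Z_{\mathsf{h}(u)}\cap N^-_{D(u)}$. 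The point you miss is that the same $z$ survives: one then applies the flow $\rme^{t\mathsf{h}(u)}$ --- not $\rme^{tD(u)}$ as you do --- which fixes both $x$ and $z$, commutes with $g\in Z_{\mathsf{h}(u)}$, and contracts $a$ to the identity, yielding $x=\lim_{t\to+\infty}\bigl(\rme^{t\mathsf{h}(u)}a\rme^{-t\mathsf{h}(u)}\bigr)g\cdot z=g\cdot z$. Your choice of $\rme^{tD(u)}$ collapses all of $h$ and only recovers $z_0$, which is why you were pushed toward the sub-flag-manifold viewpoint; the coarser flow $\rme^{t\mathsf{h}(u)}$ kills precisely the part of $h$ outside $Z_{\mathsf{h}(u)}$ and produces the desired factorization directly, with no auxiliary Bruhat theory for $Z_{\mathsf{h}(u)}$.
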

		
\begin{proof}
(i) Consider $h\in N^-_{D(u)}$ and $z\in\fix_{\Theta}(D(u),sw)$ such that $x=h\cdot z$. Recall that $\Theta(\lambda^+(u))\subset\Theta(\phi)$, $D(u)=\Ad(k)\lambda^+(u)$ and $\mathsf{h}(u)=\Ad(k)H_{\phi}$ for some $k\in K$. We claim that we can write%
\begin{equation*}
  h = ag \mbox{\quad with\quad} a\in N_{\mathsf{h}(u)}^- \mbox{\ and\ } g\in N(\mathsf{h}(u))\cap N^-_{D(u)}\subset Z_{\mathsf{h}(u)}\cap N^-_{D(u)}.%
\end{equation*}
Since $N_{\mathsf{h}(u)} = kN_{H_{\phi}}k^{-1}$, $N_{D(u)} = kN_{\lambda^+(u)}k^{-1}$ and $Z_{\mathsf{h}(u)} = kZ_{H_{\phi}}k^{-1}$, it suffices to prove that if $x \in \fix_{\Theta}(H_{\phi},w)\cap\st_{\Theta}(\lambda^+(u),sw)$ and we write $x = h \cdot z$ with $h \in N^-_{\lambda^+(u)}$ and $z \in \fix_{\Theta}(\lambda^+(u),sw)$, then%
\begin{equation*}
  h = ag \mbox{\quad with\quad} a\in N_{H_{\phi}}^- \mbox{\ and\ } g\in N(H_{\phi})\cap N^-_{\lambda^+(u)}\subset Z_{H_{\phi}}\cap N^-_{\lambda^+(u)}.%
\end{equation*}
This is proved as follows: For any $H \in \cl\fa^+$ we have $N^- = N^-_H N^-(H)$. Hence, $h = ag$ with $a \in N^-_{H_{\phi}}$ and $g \in N^-(H_{\phi})$. Moreover, $g = a^{-1}h \in N^-_{H_{\phi}}N^-_{\lambda^+(u)} \subset N^-_{\lambda^+(u)}$, since $\Theta(\lambda^+(u)) \subset \Theta(\phi)$.%
		
The fact that $g\in Z_{\mathsf{h}(u)}$ implies $\rme^{t\mathsf{h}(u)}g = g\rme^{t\mathsf{h}(u)}$ for all $t\in\R$. Also, since $\fix_{\Theta}(D(u),sw)\subset\fix_{\Theta}(\mathsf{h}(u),sw) = \fix_{\Theta}(\mathsf{h}(u),w)$, we have $\rme^{t\mathsf{h}(u)}\cdot z = z$ and $\rme^{t\mathsf{h}(u)}\cdot x = x$ for all $t\in\R$. Therefore,%
\begin{equation*}
  x = \rme^{t\mathsf{h}(u)}\cdot x = \rme^{t\mathsf{h}(u)}h \cdot z =	\bigl(\rme^{t\mathsf{h}(u)}a\rme^{-t\mathsf{h}(u)}\bigr)g\cdot z\rightarrow g\cdot z \mbox{\quad as\ } t \rightarrow \infty,%
\end{equation*}
since $\rme^{\mathsf{h}(u)}a\rme^{-t\mathsf{h}(u)} \rightarrow 1$. This proves the first item.%

(ii) Since $\EC^i_{\Theta,w}(u,x) = \fn^i_{\mathsf{h}(u)}\cdot x$, we can write $V = Y(x)$ for $Y\in\fn^i_{\mathsf{h}(u)}$ such that $|V|=|Y|$. By item (i) we have $V = (\rmd[g])_z(\Ad(g^{-1})Y)(z)$, where $g\in Z_{\mathsf{h}(u)}\cap N^-_{D(u)}$ and $z\in\fix_{\Theta}(D(u),sw)$. Moreover, since $\fg=\fg_z^{\perp}\oplus\fg_z$, we can write $\Ad(g^{-1})Y=X+Z$ with $X\in\fg_z^{\perp}$ and $Z\in\fg_z$, and therefore $(\Ad(g^{-1})Y)(z)=X(z)$ implying $V = (\rmd[g])_zX(z)$. It only remains to show that $X\in\fn^i_{\mathsf{h}(u)}$. Since $g\in Z_{\mathsf{h}(u)}\cap N^-_{D(u)}$, we have%
\begin{equation*}
  \Ad(g^{-1})Y\in\Ad(g^{-1})\fn^i_{\mathsf{h}(u)}=\fn^i_{\mathsf{h}(u)}\implies X+Z\in\fn^i_{\mathsf{h}(u)}.%
\end{equation*}
On the other hand, there exists $k\in K$ such that $z=k\cdot wb_{\Theta}$ and $\mathsf{h}(u)=\Ad(k)H_{\phi}$, implying%
\begin{align*}
  \fn^i_{\mathsf{h}(u)}\cap (\fg_z^{\perp}\oplus\fg_z) &= \Ad(k)\left(\fn^i_{H_{\phi}}\cap (w\fn_{\Theta}^-\oplus w\fp_{\Theta})\right)\\
	&= \Ad(k)\left((\fn^i_{H_{\phi}}\cap w\fn_{\Theta}^-) \oplus(\fn^i_{H_{\phi}}\cap w\fp_{\Theta})\right)=(\fn^i_{\mathsf{h}(u)}\cap \fg_z^{\perp}) \oplus(\fn^i_{\mathsf{h}(u)}\cap\fg_z)%
\end{align*}
and consequently that $X,Z\in\fn^i_{\mathsf{h}(u)}$, as desired. To show the identity%
\begin{equation}\label{eq_decomposition}
  \fn^i_{H_{\phi}} = \left(\fn^i_{H_{\phi}}\cap (w\fn_{\Theta}^-\oplus w\fp_{\Theta})\right) = \left((\fn^i_{H_{\phi}}\cap w\fn_{\Theta}^-) \oplus(\fn^i_{H_{\phi}}\cap w\fp_{\Theta})\right),%
\end{equation}
observe that if $H_w := wH_{\Theta}$, then $\ad(H_w)$ and $\ad(H_{\Theta})$ commute, since $\fa$ is abelian. Hence, $\ad(H_w)\fn^i_{H_{\phi}} \subset \fn^i_{H_{\phi}}$, since $\fn^i_{H_{\phi}}$ is a sum of eigenspaces. Now we decompose $\fn^i_{H_{\phi}}$ into the sum of eigenspaces of $\ad(H_w)_{|\fn^i_{H_{\phi}}}$ associated with negative eigenvalues and the corresponding sum associated with nonnegative eigenvalues, $\fn^i_{H_{\phi}} = V_i^- \oplus V_i^{0+}$. Since $w\fn^-_{\Theta}$ and $w\fp_{\Theta}$ are the sums of eigenspaces associated with negative and nonnegative eigenvalues in $\fg$, we have $V_i^- \subset \fn^i_{H_{\phi}} \cap w\fn^-_{\Theta}$ and $V_i^{0+} \subset \fn^i_{H_{\phi}} \cap w\fp_{\Theta}$, implying%
\begin{equation*}
 \left((\fn^i_{H_{\phi}}\cap w\fn_{\Theta}^-) \oplus(\fn^i_{H_{\phi}}\cap w\fp_{\Theta})\right) \supset V_i^- \oplus V_i^{0+} = \fn^i_{H_{\phi}}.%
\end{equation*}
The other inclusion is trivial, hence \eqref{eq_decomposition} is proved.%
\end{proof}
	
\begin{remark}
Let us notice that $X\in\fg_z^{\perp}$ implies $|X(z)|=|X|$ by Proposition \ref{K-invariant}, and hence%
\begin{equation}\label{ring}
  \|(\rmd[g])_z\|^{-1}|V|\leq |X|\leq \|(\rmd[g])_z^{-1}\||V|.%
\end{equation}
\end{remark}
	
\section{Lyapunov exponents}\label{sec_le}
	
In this section, we show that for the points in $\st_{\Theta}^{\nu}(w)$ one can recover the Lyapunov exponents of the control system $\Sigma_{\Theta}$ from the $\fa$-Lyapunov exponents, where $w\in\WC$ and $\nu$ is any $\theta_1$-invariant probability measure on the Borelians of $\UC$. For the understanding of this section, we advise the reader to take a look at Subsection \ref{subsec_met} of the appendix first.%
	
\subsection{Regular Lyapunov exponents}

Let $\nu$ be a $\theta_1$-invariant probability measure on the Borelians $\BC$ of $\UC$ and consider the $\theta_1$-invariant set $\Omega$ of full measure so that Theorem \ref{Teo} holds. Let us consider the metric dynamical system $(\UC,\BC,\nu,(\theta_n)_{n\in\Z})$. If $A:\UC \rightarrow \Gl(\fg)$ is the random map given by $A(u) := \Ad(\varphi_{1,u}(e))$, then the linear cocycle $\psi(n,u)$ generated by $A$ satisfies%
\begin{equation*}
  \psi(n,u) = \Ad(\varphi_{n,u}(e))\quad \forall n \in \Z.%
\end{equation*}
Moreover, since $u \mapsto \Ad(\varphi_{1,u}(e))$ is continuous and $\UC$ is compact, we have $\log^+\|A\|,\log^+\|A^{-1}\| \in L^1(\UC,\nu,\BC)$, and hence there is a subset $\Omega' \subset \Omega$ of full measure such that Theorem \ref{MET} holds. In particular, by Lemma \ref{asymptotic} it holds that $\Psi(u) = \Ad(\rme^{D(u)})$, where%
\begin{equation*}
  \Psi(u) = \lim_{n\rightarrow+\infty}(\psi(n,u)^*\psi(n,u))^{1/2n}.%
\end{equation*}
Also, since $D(u)=\Ad(k)\lambda^+(u)$ for some $k\in K$ and $\lambda^+(u)\in\cl\fa^+$, the eigenvalues of $\ad(D(u))$ coincide with the eigenvalues of $\ad(\lambda^+(u))$ which are given by $\alpha(\lambda^+(u))$, $\alpha\in\Pi$. Let us denote by $\lambda_1(u)>\ldots>\lambda_{p(u)}(u)$ the distinct ones. The eigenspace of $\ad(\lambda^+(u))$ associated to $\lambda_i(u)$ is then given by%
\begin{equation}\label{decomposition}
	U_i^+(u) = \bigoplus_{\alpha\in\Pi: \;\alpha(\lambda^+(u))=\lambda_i(u)}\fg_{\alpha}%
\end{equation}
and consequently $U_i(u) = \Ad(k)U_i(u)^+$ is the corresponding eigenspace of $\ad(D(u))$. Also, the filtration $V_{p(u)}(u)\subset\cdots\subset V_1(u) = \fg$ given by Theorem \ref{MET} satisfies%
\begin{equation*}
  V_i(u) = \Ad(k)V^+_i(u), \;\;\mbox{ where  }\;\; V^+_i(u) = U^+_{p(u)}(u) \oplus \cdots \oplus U^+_i(u).%
\end{equation*}
	
The next proposition shows that the filtration $\{V_i(u)\}_{i=1}^{p(u)}$ is invariant under the action of $P^-_{D(u)}$.%
	
\begin{proposition}\label{V} 
For any $u\in\Omega$, $i\in\{1,\ldots,p(u)\}$ and $g\in P^-_{D(u)}$, the following statements hold:%
\begin{enumerate}
\item[(i)] $\Ad(g)V_i(u)=V_i(u)$;%
\item[(ii)] $\lim_{n\rightarrow+\infty}\frac{1}{n}\log|\Ad(\varphi_{n, u}(g))X|=\lambda_i(u)\iff X\in V_i(u)\setminus V_{i+1}(u)$;%
\item[(iii)] $\lim_{n\rightarrow+\infty}\frac{1}{n}\log\|\Ad(\varphi_{n, u}(g))_{|V_i(u)}\| = \lambda_i(u)$.%
\end{enumerate}
\end{proposition}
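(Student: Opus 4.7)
The central observation is right-invariance: $\varphi_{n,u}(g)=\varphi_{n,u}(e)\cdot g$, so
\[
  \Ad(\varphi_{n,u}(g)) = \Ad(\varphi_{n,u}(e))\Ad(g) = \psi(n,u)\Ad(g),
\]
where $\psi(n,u)=\Ad(\varphi_{n,u}(e))$ is exactly the linear cocycle to which the MET (Theorem \ref{MET}) together with Lemma \ref{asymptotic} have been applied to produce the filtration $\{V_i(u)\}_i$. The whole proposition therefore reduces to two ingredients: (a) $\Ad(P^-_{D(u)})$-invariance of the filtration, i.e.~(i); and (b) the standard Oseledets description of $V_i(u)$ for the cocycle $\psi$.

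For (i), since $D(u)=\Ad(k)\lambda^+(u)$ with $k\in K$ and since $V_i(u)=\Ad(k)V_i^+(u)$, $P^-_{D(u)}=kP^-_{\lambda^+(u)}k^{-1}$ transform equivariantly, it suffices to prove $\Ad(P^-_{\lambda^+(u)})V_i^+(u)=V_i^+(u)$. By construction $V_i^+(u)=\bigoplus_{\alpha(\lambda^+(u))\leq\lambda_i(u)}\fg_\alpha$ is a sum of $\ad(\lambda^+(u))$-eigenspaces with weight at most $\lambda_i(u)$. The decisive bracket calculation: for $X\in\fg_\beta$ with $\beta(\lambda^+(u))\leq 0$ and $Y\in\fg_\alpha$ with $\alpha(\lambda^+(u))\leq\lambda_i(u)$, one has $[X,Y]\in\fg_{\alpha+\beta}$ of weight $(\alpha+\beta)(\lambda^+(u))\leq\lambda_i(u)$, so $\ad(\fp^-_{\lambda^+(u)})V_i^+(u)\subset V_i^+(u)$. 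I lift this to the group via the Langlands-type decomposition $P^-_{\lambda^+(u)}=Z_{\lambda^+(u)}N^-_{\lambda^+(u)}$: on $Z_{\lambda^+(u)}$, $\Ad(g)$ commutes with $\ad(\lambda^+(u))$ and hence preserves each eigenspace $U_i^+(u)$ individually; on $N^-_{\lambda^+(u)}=\exp(\fn^-_{\lambda^+(u)})$, $\Ad$ acts as $\exp(\ad X)$ with $X\in\fn^-_{\lambda^+(u)}$, which preserves $V_i^+(u)$ by the bracket bound above.

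Claim (ii) follows immediately: by (i), $\Ad(g)$ is a bijection of $V_i(u)\setminus V_{i+1}(u)$ onto itself, and MET states that $\lim_n\frac{1}{n}\log|\psi(n,u)Y|=\lambda_i(u)$ if and only if $Y\in V_i(u)\setminus V_{i+1}(u)$; applying this with $Y=\Ad(g)X$ and invoking the cocycle identity finishes (ii). For (iii), (i) permits the factorization $\psi(n,u)\Ad(g)|_{V_i(u)}=\bigl(\psi(n,u)|_{V_i(u)}\bigr)\circ\bigl(\Ad(g)|_{V_i(u)}\bigr)$, and since $\Ad(g)|_{V_i(u)}$ is a fixed isomorphism whose norm and conorm are independent of $n$, a standard sandwich estimate forces
\[
  \lim_n\frac{1}{n}\log\|\psi(n,u)\Ad(g)|_{V_i(u)}\| = \lim_n\frac{1}{n}\log\|\psi(n,u)|_{V_i(u)}\| = \lambda_i(u),
\]
where the last equality is a routine consequence of MET: the lower bound is witnessed by any unit vector in $V_i(u)\setminus V_{i+1}(u)$, and the upper bound follows from a finite basis of $V_i(u)$ combined with the fact that every element of $V_i(u)$ has Lyapunov exponent at most $\lambda_i(u)$.

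The main obstacle is (i): while the Lie-algebra bracket calculation is transparent, the group-level statement genuinely requires the Langlands-type decomposition $P^-_{D(u)}=Z_{D(u)}N^-_{D(u)}$ together with the observation that the Levi component $Z_{D(u)}$ acts diagonally on the $\ad(D(u))$-eigenspaces. Once (i) is established, (ii) and (iii) reduce to MET combined with the cocycle identity $\Ad(\varphi_{n,u}(g))=\psi(n,u)\Ad(g)$.
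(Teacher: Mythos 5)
Your proof is correct and follows essentially the same route as the paper: reduce (i) via conjugation by $k$ to the $\lambda^+(u)$ picture, split $P^-_{\lambda^+(u)}$ into its Levi part $Z_{\lambda^+(u)}$ (which preserves each eigenspace) and nilradical $N^-_{\lambda^+(u)}$ (handled by the root-space bracket bound), then derive (ii) from the cocycle identity $\Ad(\varphi_{n,u}(g)) = \psi(n,u)\Ad(g)$ together with (i) and the Oseledets characterization of $V_i(u)$, and obtain (iii) by sandwiching $\|\psi(n,u)\Ad(g)|_{V_i(u)}\|$ between constant multiples of $\|\psi(n,u)|_{V_i(u)}\|$. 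The only cosmetic difference is in the upper bound of (iii): the paper decomposes $X \in V_i(u)$ along the Oseledets projections $P_j(u)$ and invokes the uniformity of convergence in the MET, whereas you use a fixed finite basis of $V_i(u)$; both give the same bound and are equally rigorous, since over a finite set the required $n_0$ can simply be taken as a maximum.
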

	
\begin{proof}
(i) Since $P^-_{D(u)} = k P^-_{\lambda^+(u)}k^{-1}$, where $k\in K$ satisfies $D(u)=\Ad(k)\lambda^+(u)$, it suffices to show that $\Ad(g)V^+_i(u)=V^+_i(u)$ for any $g\in P^-_{\lambda^+(u)}$. Moreover, since $P^-_{\lambda^+(u)} = N^-_{\lambda^+(u)}Z_{\lambda^+(u)}$ and $\Ad(g)\lambda^+(u) = \lambda^+(u)$ for any $g\in Z_{\lambda^+(u)}$, our work is reduced to showing the result for $g\in N_{\lambda^+(u)}$, observing that $\Ad(g)\lambda^+(u) = \lambda^+(u)$ implies $\Ad(g)U^+_j(u) = U^+_j(u)$ for any $j \in \{1,\ldots,p(u)\}$. By \eqref{decomposition} we have%
\begin{equation*}
  V_i^+(u) = \bigoplus_{j=i}^{p(u)}U_i^+(u) = \bigoplus_{\stackrel{\alpha\in\Pi}{\alpha(\lambda^+(u))\leq \lambda_i(u)}}\fg_{\alpha}.%
\end{equation*}
Moreover, for any $\beta\in \Pi^-\setminus\langle\Theta(\lambda^+(u))\rangle$ we have $\beta(\lambda^+(u))<0$ and so $(\alpha+\beta)(\lambda^+(u))<\alpha(\lambda^+(u))$ for any $\alpha\in\Pi$. Since $[\fg_{\alpha},\fg_{\beta}]\subset\fg_{\alpha+\beta}$ if $\alpha+\beta\in\Pi$, and $[\fg_{\alpha},\fg_{\beta}] = 0$ otherwise, we obtain%
\begin{equation*}%
	[\fn_{\lambda^+(u)}^-, V^+_i(u)]\subset \bigoplus_{j=i+1}^{p}U^+_j(u)\implies \Ad(g)V^+_i(u)\subset V^+_i(u),%
\end{equation*}
implying $\Ad(g)V^+_i(u) = V^+_i(u)$, using that $\Ad(g)$ is an isomorphism.%
		
(ii) Using right-invariance, by the previous discussion we have%
\begin{equation*}
  \lim_{n\rightarrow+\infty}\frac{1}{n}\log|\Ad(\varphi_{n,u}(g))X| = \lambda_i\iff \Ad(g)X\in V_i(u)\setminus V_{i+1}(u)%
\end{equation*}
which by the invariance in item (i) is equivalent to $X\in V_i(u)\setminus V_{i+1}(u)$.%
	
(iii) By the $P^-_{D(u)}$-invariance of $V_i(u)$, it is enough to show that%
\begin{equation*}
  \lim_{n\rightarrow+\infty}\frac{1}{n}\log\|\Ad(\varphi_{n,u}(e))_{|V_i(u)}\| = \lambda_i(u).%
\end{equation*}
Moreover, since%
\begin{equation*}
  \liminf_{n\rightarrow+\infty}\frac{1}{n}\log\|\Ad(\varphi_{n,u}(e))_{|V_i(u)}\| \geq \lambda_i(u)%
\end{equation*}
trivially follows from item (ii), our work is reduced to showing the opposite inequality. Write $X\in V_i(u)$ with $|X|=1$ as $X=\sum_{j=i}^{p(u)}P_j(u)X$. Then, by the uniformity of the convergence (see Theorem \ref{MET})%
\begin{equation*}
  \frac{1}{n}\log|\Ad(\varphi_{n, u}(e))P_j(u)X|\rightarrow\lambda_i(u), \;\;\;n\rightarrow+\infty%
\end{equation*}
for any $\ep>0$ there exists an integer $n_0>0$ such that%
\begin{equation*}
  |\Ad(\varphi_{n,u}(e))X| \leq \sum_{j=i}^{p(u)}\rme^{n(\lambda_j(u)+\varepsilon)}\|P_j(u)\| \leq p(u)\rme^{n(\lambda_i(u)+\varepsilon)}\|P_j(u)\|%
\end{equation*}
for any $n \geq n_0$ and $j\in\{i,\ldots,p(u)\}$. This easily implies the desired inequality%
\begin{equation*}
  \limsup_{n\rightarrow+\infty}\frac{1}{n}\log\|\Ad(\varphi_{n,u}(e))_{|V_i(u)}\| \leq \lambda_i(u),%
\end{equation*}
concluding the proof.
\end{proof}
	
Now we analyze the Lyapunov exponents of the systems $\Sigma_{\Theta}$. Since Lyapunov exponents do not change under time-discretization, we will only consider times $n\in\Z$.%
	
For any point $(u,x) \in \UC\times\F_{\Theta}$, let us denote by $\Lambda_{\Ly,\Theta}(u,x)$ the Lyapunov spectrum of $\Sigma_{\Theta}$ at $(u,x)$, i.e.,%
\begin{equation*}
  \Lambda_{\Ly,\Theta}(u,x) := \left\{\lambda\in\R\ :\ \lambda = \lim_{n\rightarrow+\infty}\frac{1}{n}\log|(\rmd\varphi_{n,u})_xv| \mbox{ for some } v\in T_x\F_{\Theta}\setminus\{0\}\right\}.%
\end{equation*}
Let us fix a $\theta_1$-invariant measure $\nu$ on $\UC$ and for any $w\in\WC$ consider%
\begin{equation*}
	\Lambda_{\Ly}(\st^{\nu}_{\Theta}(w)) := \bigcup_{(u,x)\in \st^{\nu}_{\Theta}(w)}\Lambda_{\Ly,\Theta}(u,x).%
\end{equation*}
The next result shows that on the above set we can recover all Lyapunov exponents from the $\fa$-Lyapunov exponents.%
	
\begin{theorem}\label{exponents}
For $w\in \WC$ and $(u,x) \in \st^{\nu}_{\Theta}(w)$ it holds that%
\begin{equation*}
  \Lambda_{\Ly,\Theta}(u,x) = \left\{\alpha(\lambda^+(u)): \;\alpha\in w\left(\Pi^-\setminus\langle\Theta\rangle\right)\right\}.%
\end{equation*}
\end{theorem}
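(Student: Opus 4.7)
The plan is to compute the Lyapunov spectrum on $T_x\F_\Theta$ by pulling back to a fixed point $z\in\fix_\Theta(D(u),w)$ and reducing to the already analyzed adjoint cocycle $\Ad(g_n)$ on $\fg$, where $g_n:=\varphi_{n,u}(e)$. By right-invariance, if $v=Y(x)$ for $Y\in\fg$, then
\[
(\rmd\varphi_{n,u})_x v = \pi_{g_n\cdot x}\bigl(\Ad(g_n)Y\bigr),
\]
so the problem reduces to controlling the growth of $\Ad(g_n)Y$ in $\fg$ after projection onto the moving tangent space.

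First I would use $\st_\Theta(D(u),w)=N^-_{D(u)}\cdot\fix_\Theta(D(u),w)$ to write $x=g\cdot z$ with $g\in N^-_{D(u)}\subset P^-_{D(u)}$ and $z\in\fix_\Theta(D(u),w)$, and express an arbitrary $v\in T_x\F_\Theta$ as $v=(\rmd[g])_z X(z)$ with $X\in\fg_z^\perp$. Lemma \ref{auxiliar}, applied with $Y=D(u)$, guarantees that $\fg_z^\perp$ decomposes into eigenspaces of $\ad(D(u))$. Writing $z=k_0\ell w\cdot b_\Theta$ with $k_0\in K$ such that $D(u)=\Ad(k_0)\lambda^+(u)$ and $\ell\in K_{\lambda^+(u)}$, one has $\fg_z^\perp=\Ad(k_0\ell w)\fn^-_\Theta$, and a conjugation computation using $\Ad(\ell^{-1})\lambda^+(u)=\lambda^+(u)$ and $\Ad(w^{-1})\lambda^+(u)=w^{-1}\lambda^+(u)\in\fa$ shows that $\Ad(k_0\ell w)\fg_\alpha$ lies in the $(w\alpha)(\lambda^+(u))$-eigenspace of $\ad(D(u))$. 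As $\alpha$ ranges over $\Pi^-\setminus\langle\Theta\rangle$, the resulting set of eigenvalues is precisely $\{\beta(\lambda^+(u)):\beta\in w(\Pi^-\setminus\langle\Theta\rangle)\}$, the candidate spectrum.

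It then suffices to prove that for $X$ in a single eigenspace $E_\beta\subset\fg_z^\perp$, the corresponding $v$ has Lyapunov exponent $\beta(\lambda^+(u))$. By the chain rule, $(\rmd\varphi_{n,u})_x v=\pi_{g_n\cdot x}(\Ad(g_n g)X)$. Writing $g=\exp(Z)$ with $Z\in\fn^-_{D(u)}$, the operator $\ad(Z)$ strictly lowers the $\ad(D(u))$-weight, so $\Ad(g)X=X+X'$ with $X'$ in the sum of strictly lower eigenspaces of $\ad(D(u))$; Proposition \ref{V}(ii)-(iii) then yields $\frac{1}{n}\log|\Ad(g_n g)X|\to\beta(\lambda^+(u))$. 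Proposition \ref{K-invariant} gives the upper bound $\limsup_n\frac{1}{n}\log|(\rmd\varphi_{n,u})_x v|\le\beta(\lambda^+(u))$. The general statement (arbitrary $v$) then follows because the Lyapunov exponent of a sum over distinct eigenspaces is the maximum of the component exponents.

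The main obstacle will be the matching lower bound, which requires showing that the moving projection $\pi_{g_n\cdot x}$ does not asymptotically annihilate $\Ad(g_n g)X$ in its dominant direction. My plan is to invoke the MET (Theorem \ref{MET}) for $\Ad(g_n)$: the Oseledets subspace associated with the exponent $\beta(\lambda^+(u))$ coincides with the $\ad(D(u))$-eigenspace $U_{j(\beta)}(u)$, and the initial condition $X\in\fg_z^\perp$ ensures that $\Ad(g_n g)X/|\Ad(g_n g)X|$ approaches, up to exponentially small corrections, a fixed direction in $\fg_z^\perp\cap U_{j(\beta)}(u)$. Combining this with the $\Ad(g_n)$-equivariance $\fg_{g_n\cdot x}=\Ad(g_n)\fg_x$ and the preservation of the eigenspace filtration under $N^-_{D(u)}$ (Proposition \ref{V}(i)) should yield a uniform transversality estimate between $\Ad(g_n g)X$ and $\fg_{g_n\cdot x}$, closing the argument.
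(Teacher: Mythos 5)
Your overall strategy coincides with the paper's: write $x=g\cdot z$ with $g$ in the negative parabolic/nilpotent subgroup and $z$ fixed, pull tangent vectors back to $\fg_z^{\perp}$, decompose $\fg_z^{\perp}$ into $\ad(D(u))$-eigenspaces via Lemma~\ref{auxiliar}, identify the eigenvalues with $\alpha(\lambda^+(u))$ for $\alpha\in w(\Pi^-\setminus\langle\Theta\rangle)$, and take the maximum over components for a general vector. The upper bound via Proposition~\ref{K-invariant} is fine, as is the observation that $\Ad(g)$ preserves the filtration $V_i(u)$.

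The gap is precisely at the lower bound, which you correctly flag as ``the main obstacle'' but then leave as a plan rather than an argument, and the plan as stated contains an error. You claim that $\Ad(g_n g)X/|\Ad(g_n g)X|$ approaches ``a fixed direction in $\fg_z^{\perp}\cap U_{j(\beta)}(u)$.'' This is not what the MET gives: the Oseledets subspaces evolve along the orbit ($E_j(\theta_n u)=\Ad(g_n)E_j(u)$), and $\Ad(g_ng)$ does not preserve either $\fg_z^{\perp}$ or the eigenspaces $U_j(u)$, so the normalized images do not converge to any fixed direction. Consequently the asserted ``uniform transversality estimate'' between $\Ad(g_ng)X$ and the moving isotropy space $\fg_{g_n\cdot x}=\Ad(g_ng)\fg_z$ does not follow from what you wrote. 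The quantity you actually need to control is
\begin{equation*}
  \left|\pi_{\varphi_{n,u}(x)}\bigl(\Ad(\varphi_{n,u}(g))X\bigr)\right|=\inf_{Z\in\fg_z}\bigl|\Ad(\varphi_{n,u}(g))(X-Z)\bigr|,
\end{equation*}
and the lower bound on its exponential growth rate is exactly what Lemma~\ref{liminf}(i) provides; it uses the $\Psi(\omega)$-invariance of $\fg_z$ (which holds here by Lemma~\ref{auxiliar}, since $\Psi=\Ad(\rme^{D(u)})$), the filtration-invariance $\Ad(g)V_i(u)=V_i(u)$ from Proposition~\ref{V}(i), and the random inner product adapted to the Oseledets splitting to show that $\inf_{Z\in\fg_z}|Q^i(\omega)\Ad(g)(X-Z)|>0$. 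This nonvanishing is the transversality fact you gesture at but do not prove. Citing and applying Lemma~\ref{liminf}(i) would close the argument; as written, the proposal reduces the hard step to an incorrect ``fixed direction'' heuristic and therefore does not establish the lower bound.
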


\begin{proof} 
Let us consider as before the eigenspace $U_i(u)$ of $\ad(D(u))$ associated with $\lambda_i(u)$. Since $x\in\st_{\Theta}(D(u),w)$, we can write $x = g\cdot z$ with $g\in P^-_{D(u)}$ and $z\in\fix_{\Theta}(D(u),w)$. By Lemma \ref{auxiliar}, $\fg_z$ and $\fg_z^{\perp}$ are $\ad(D(u))$-invariant and therefore%
\begin{equation}\label{roots}
  \fg_z = \bigoplus_{j=1}^{p(u)}\left(U_j(u)\cap\fg_{z}\right) \;\;\;\mbox{ and }\;\;\; \fg_{z}^{\perp}=\bigoplus_{j=1}^{p(u)}\left(U_j(u)\cap\fg_{z}^{\perp}\right).%
\end{equation}
Let $i \in \{1,\ldots,p(u)\}$ be such that $U_i(u)\cap\fg_z^{\perp}\neq\{0\}$ and consider the vector $v = (\rmd[g])_zX(z)\in T_x\F_{\Theta}\setminus\{0\}$ with $X\in U_i(u)\cap\fg_z^{\perp}$. Since%
\begin{equation*}
  (\rmd\varphi_{n,u})_xv = (\rmd[\varphi_{n,u}(g)])_zX(z) = (\Ad(\varphi_{n, u}(g))X)(\varphi_{n, u}(x)),%
\end{equation*}
we obtain%
\begin{align*}
  |(\rmd\varphi_{n,u})_xv| &= |(\Ad(\varphi_{n,u}(g))X)(\varphi_{n,u}(x))|=|P_{\varphi_{n,u}(x)}(\Ad(\varphi_{n,u}(g))X)|\\
	                        &= \inf_{Y\in\fg_{\varphi_{n,u}(x)}}|\Ad(\varphi_{n,u}(g))X-Y|=\inf_{Z\in \fg_z}\left|\Ad(\varphi_{n,u}(g))(X-Z)\right|,%
\end{align*}
where we use that $\fg_{\varphi_{n,u}(x)} = \fg_{\varphi_{n,u}(g)\cdot z} = \Ad(\varphi_{n,u}(g))\fg_z$. Lemma \ref{liminf}(i) yields%
\begin{equation*}
  \lim_{n\rightarrow+\infty}\frac{1}{n}\log\left|(\rmd\varphi_{n,u})_{x}v\right| = \lim_{n\rightarrow+\infty}\frac{1}{n}\log \inf_{Z\in \fg_z}\left|\Ad(\varphi_{n,u}(g))(X-Z)\right| = \lambda_i(u).%
\end{equation*}
Now let $v\in T_x\F_{\Theta}\setminus\{0\}$. Since $T_x\F_{\Theta} = (\rmd[g])_zT_z\F_{\Theta} = (\rmd[g])_z\pi_z(\fg^{\perp}_z)$, by \eqref{roots} we obtain%
\begin{equation*}
  v = \sum_iv_i, \;\;\mbox{ where }\;\;v_i=(\rmd[g])_zX_i(z) \;\;\mbox{ with }\;\;0\neq X_i\in U_i(u)\cap\fg_{z}^{\perp}.%
\end{equation*}
Moreover, $v_i\neq 0$ iff $X_i\neq 0$, and by the above%
\begin{equation*}
  \lim_{n\rightarrow+\infty}\frac{1}{n}\log|(\rmd\varphi_{n,u})_xv_i| = \lambda_i(u).%
\end{equation*}
Since all $\lambda_i(u)$ are distinct, we obtain (see, e.g., \cite[Lem.~6.2.2]{CK2})%
\begin{equation*}
  \lim_{n\rightarrow+\infty}\frac{1}{n}\log|(\rmd\varphi_{n, u})_xv| = \max\{\lambda_i(u) : v_i\neq 0\},%
\end{equation*}
implying $\Lambda_{\Ly,\Theta}(u,x) = \left\{\lambda_i(u): \;  U_i(u)\cap\fg_z^{\perp}\neq\{0\}\right\}$. However, by considering $k\in K$ with $D(u) = \Ad(k)\lambda^+(u)$ and $z=k\cdot wb_{\Theta}$, we have%
\begin{equation*}
  U_i(u)\cap \fg_z^{\perp} = \Ad(k)\left(U^+_i(u)\cap w\fn_{\Theta}^-\right)
	                         = \Ad(k)\bigoplus_{\alpha\in w(\Pi^-\setminus\langle\Theta\rangle)\atop \alpha(\lambda^+(u)) = \lambda_i(u)}\fg_{\alpha},%
\end{equation*}
implying that $\Lambda_{\Ly,\Theta}(u,x)$ consists of all numbers $\alpha(\lambda^+(u))$ with $\alpha\in w(\Pi^-\setminus\langle\Theta\rangle)$, as stated.%
\end{proof}	

\subsection{Unstable, center and stable directions}
	
Let $\MC_{\Theta}(w)\subset\UC\times\F_{\Theta}$ be a Morse component of the control flow and consider the bundles $\EC^-_{\Theta,w}, \,\EC^0_{\Theta,w}$ and $\EC^+_{\Theta,w}$. For any $(u,x)\in\MC^{\nu}_{\Theta}(w)$ and $i = +,0,-$, let us define%
\begin{equation*}
  \Lambda^i_{\Ly,\Theta}(u,x) := \left\{\lambda \in\R: \; \lambda = \lim_{t\rightarrow+\infty}\frac{1}{t}\log|(\rmd\varphi_{t,u})_xv| \mbox{ for some } v\in \EC^i_{\Theta,w}(u,x)\setminus\{0\}\right\}%
\end{equation*}
and%
\begin{equation*}
  \Lambda^i_{\Ly}\left(\MC^{\nu}_{\Theta}(w)\right) := \bigcup_{(u,x)\in\MC^{\nu}_{\Theta}(w)}\Lambda_{\Ly,\Theta}^i(u,x).%
\end{equation*}
	
Let us also consider the subsets of roots given by%
\begin{equation*}
  \Pi^0_{\phi,\Theta,w} := \langle\Theta(\phi)\rangle\cap w(\Pi^-\setminus\langle\Theta\rangle) \;\;\mbox{ and }\;\;\Pi^{\pm}_{\phi, \Theta,w} := \Pi^{\pm}\setminus\langle\Theta(\phi)\rangle\cap w(\Pi^-\setminus\langle\Theta\rangle).%
\end{equation*}
	
\begin{theorem}\label{centralLy}
Let $\nu$ be a $\theta_1$-invariant measure on $\UC$. Then, for any $s\in\WC_{\Theta(\phi)}$ it holds that%
\begin{equation*}
  \Lambda^i_{\Ly,\Theta}(u,x) = \left\{\alpha(\lambda^+(u)): \alpha\in \Pi^i_{\phi,\Theta,sw}\right\} \;\;\mbox{ for all }\;\;(u, x)\in\MC_{\Theta}^{\nu}(w)\cap \st^{\nu}_{\Theta}(sw).%
\end{equation*}
Moreover,%
\begin{equation*}
  \Lambda^i_{\Ly}\left(\MC^{\nu}_{\Theta}(w)\right) \subset \bigcup_{\alpha\in \Pi^i_{\phi,\Theta,w}}\alpha(\Lambda_{\Ly}^+).%
\end{equation*}
\end{theorem}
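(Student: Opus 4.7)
The plan is to split the proof into the equality for points in $\MC_{\Theta}^{\nu}(w) \cap \st^{\nu}_{\Theta}(sw)$ and the "moreover" inclusion, the latter obtained by aggregating over $s \in \WC_{\Theta(\phi)}$ using Proposition \ref{MorseStable} and the $\WC_{\Theta(\phi)}$-equivariance from Proposition \ref{prop_spectralprops}. For the equality, I would fix $(u,x) \in \MC_{\Theta}^{\nu}(w) \cap \st^{\nu}_{\Theta}(sw)$ and apply Lemma \ref{technical}: write $x = g \cdot z$ with $g \in Z_{\mathsf{h}(u)} \cap N^-_{D(u)}$ and $z \in \fix_{\Theta}(D(u), sw)$, so that every $V \in \EC^i_{\Theta,w}(u,x)$ takes the form $V = (\rmd[g])_z X(z)$ with $X \in \fn^i_{\mathsf{h}(u)} \cap \fg_z^{\perp}$. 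Then I would essentially replay the proof of Theorem \ref{exponents}: right-invariance gives
\begin{equation*}
  |(\rmd\varphi_{n,u})_x V| = \inf_{Z \in \fg_z} |\Ad(\varphi_{n,u}(g))(X - Z)|,
\end{equation*}
and Lemma \ref{liminf}(i) together with Proposition \ref{V} implies that this quantity has Lyapunov exponent $\lambda_j(u)$ whenever $X$ has a nonzero component in the eigenspace $U_j(u)$ of $\ad(D(u))$. The key computation is to identify $\fn^i_{\mathsf{h}(u)} \cap \fg_z^{\perp}$ with a sum of root spaces: choosing (as at the start of the proof of Proposition \ref{MorseStable}) $k \in K$ with $D(u) = \Ad(k)\lambda^+(u)$ and $z = k \cdot sw\, b_{\Theta}$, the same root-space decomposition used in the proof of Lemma \ref{technical} gives
\begin{equation*}
  \fn^i_{\mathsf{h}(u)} \cap \fg_z^{\perp} = \Ad(k)\bigl(\fn^i_{H_{\phi}} \cap sw\fn_{\Theta}^-\bigr) = \Ad(k)\bigoplus_{\alpha \in \Pi^i_{\phi,\Theta,sw}} \fg_{\alpha},
\end{equation*}
because $\fn^i_{H_{\phi}}$ and $sw\fn^-_{\Theta}$ are both sums of root spaces. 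Since $\Ad(k)\fg_{\alpha} \subset U_j(u)$ precisely when $\alpha(\lambda^+(u)) = \lambda_j(u)$, the exponent contributed by a root component $\fg_{\alpha}$ is exactly $\alpha(\lambda^+(u))$; handling vectors with several nonvanishing root components uses the standard fact (e.g.\ \cite[Lem.~6.2.2]{CK2}) that the overall exponent of a sum is the maximum of the distinct summand exponents.

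For the "moreover" inclusion, Proposition \ref{MorseStable} places every $(u,x) \in \MC^{\nu}_{\Theta}(w)$ in some $\st^{\nu}_{\Theta}(sw)$ with $s \in \WC_{\Theta(\phi)}$, so by the first part its $i$-th spectrum equals $\{\beta(\lambda^+(u)) : \beta \in \Pi^i_{\phi,\Theta,sw}\}$. Given such $\beta$, setting $\alpha := s^{-1}\beta$ and $\mu := s^{-1}\lambda^+(u)$ yields $\beta(\lambda^+(u)) = \alpha(\mu)$ from the convention $(s\alpha)(H) = \alpha(s^{-1}H)$. Then $\mu \in \Lambda_{\Ly}^+$ by Proposition \ref{prop_spectralprops}(i), while $\alpha \in \Pi^i_{\phi,\Theta,w}$ follows from $s^{-1}\langle\Theta(\phi)\rangle = \langle\Theta(\phi)\rangle$, from the standard fact that $\WC_{\Theta(\phi)}$ preserves $\Pi^{\pm}\setminus\langle\Theta(\phi)\rangle$ (reflections at roots of $\Theta(\phi)$ do not alter the coefficients of a root along simple roots outside $\Theta(\phi)$), and from the identity $s^{-1}(sw(\Pi^-\setminus\langle\Theta\rangle)) = w(\Pi^-\setminus\langle\Theta\rangle)$.

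The main obstacle I expect is the bookkeeping in the first part: one must simultaneously check that $\fn^i_{\mathsf{h}(u)} \cap \fg_z^{\perp}$ splits as a sum of root spaces indexed by $\Pi^i_{\phi,\Theta,sw}$ and that this root decomposition refines the eigenspace decomposition of $\ad(D(u))$, so that Lemma \ref{liminf} may be applied to each summand separately. Once the indexing is in place, both statements reduce to the arguments already carried out for Theorem \ref{exponents}, modulo the $\WC_{\Theta(\phi)}$-symmetry used in the "moreover" part.
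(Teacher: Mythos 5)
Your proposal follows the paper's proof essentially step for step: Lemma \ref{technical} to write $x = g\cdot z$ and reduce to $\fn^i_{\mathsf{h}(u)}\cap\fg_z^{\perp}$, the reduction to $\inf_{Z\in\fg_z}|\Ad(\varphi_{n,u}(g))(X-Z)|$ and Lemma \ref{liminf}(i) as in Theorem \ref{exponents}, the identification $\fn^i_{\mathsf{h}(u)}\cap\fg_z^{\perp} = \Ad(k)\bigoplus_{\alpha\in\Pi^i_{\phi,\Theta,sw}}\fg_{\alpha}$, and finally the aggregation over $s\in\WC_{\Theta(\phi)}$ using $\Pi^i_{\phi,\Theta,sw}=s(\Pi^i_{\phi,\Theta,w})$ and the $\WC_{\Theta(\phi)}$-invariance of $\Lambda^+_{\Ly}$. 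The only cosmetic difference is that you absorb the extra factor $l\in K_{\lambda^+(u)}$ into $k$ rather than carrying the pair $(k,l)$ as the paper does, which is harmless since $l$ fixes both $\lambda^+(u)$ and $H_{\phi}$.
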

	
\begin{proof} 
Let $(u,x)\in\MC_{\Theta}^{\nu}(w)$ and consider $s\in\WC_{\Theta(\phi)}$ such that $x\in\st_{\Theta}(D(u),sw) \cap \fix_{\Theta}(\mathsf{h}(u),w)$, which exists by Proposition \ref{MorseStable}. By Lemma \ref{technical}, we can write $x = g\cdot z$, where $g\in Z_{\mathsf{h}(u)}\cap N^-_{D(u)}$, $z\in\fix_{\Theta}(D(u),sw)$ and, for any $v\in\EC^i_{\Theta,w}(u,x)$ there is $X\in\fn^i_{\mathsf{h}(u)} \cap \fg_z^{\perp}$ such that $v = (\rmd[g])_zX(z)$, implying%
\begin{equation*}
  |(\rmd\varphi_{t,u})_xv| = |\Ad(\varphi_{t,u}(g))X(\varphi_{t,u}(x))|.%
\end{equation*}
Since $\fn^i_{\mathsf{h}(u)}\cap \fg_z^{\perp}$ is $\ad(D(u))$-invariant, we can proceed as in the proof of Theorem \ref{exponents} to conclude that%
\begin{equation}\label{eq_lyapspec}
  \Lambda^i_{\Ly,\Theta}(u,x) = \left\{\lambda_j(u) : U_j(u)\cap\bigl(\fn^i_{\mathsf{h}(u)}\cap \fg_z^{\perp}\bigr)\neq \{0\}\right\}.%
\end{equation}
Moreover, by considering $k\in K$ and $l\in K_{\lambda^+(u)}$ such that $\mathsf{h}(u)=\Ad(k)H_{\phi}$, $D(u)=\Ad(k)\lambda^+(u)$ and $z=k\cdot lswb_{\Theta}$, we obtain%
\begin{equation*}
  U_j(u)\cap(\fn^i_{\mathsf{h}(u)}\cap \fg_z^{\perp}) = \Ad(kl)\left(U^+_j(u)\cap\left(\fn^i_{H_{\phi}}\cap sw\fn_{\Theta}^-\right)\right) \subset \Ad(kl)\bigoplus_{\alpha \in \Pi^i_{\phi,\Theta,sw}}\fg_{\alpha},%
\end{equation*}
and therefore $\Lambda^i_{\Ly,\Theta}(u,x) \subset \{\alpha(\lambda^+(u)): \alpha\in \Pi^i_{\phi,\Theta,sw}\}$. On the other hand, since%
\begin{equation*}
  \fn^i_{\mathsf{h}(u)} \cap \fg_z^{\perp} = \Ad(kl)\left(\fn^i_{H_{\phi}} \cap sw\fn^-_{\Theta}\right) = \Ad(kl)\bigoplus_{\alpha \in \Pi^i_{\phi,\Theta,sw}}\fg_{\alpha},%
\end{equation*}
for any $\alpha \in \Pi^i_{\phi,\Theta,sw}$ we have $\fg_{\alpha} \subset U_j^+(u) \cap (\fn^i_{H_{\phi}} \cap sw\fn^-_{\Theta})$ for some $j \in \{1,\ldots,p(u)\}$, which shows that $\Lambda^i_{\Ly,\Theta}(u,x) \supset \{\alpha(\lambda^+(u)): \alpha\in \Pi^i_{\phi,\Theta,sw}\}$.%
		
For the second assertion, notice that by Proposition \ref{MorseStable} we have $\MC^{\nu}_{\Theta}(w)\subset \bigcup_{s\in\WC_{\Theta(\phi)}}\st^{\nu}_{\Theta}(sw)$, and hence%
\begin{align*}
  \Lambda^i_{\Ly}\left(\MC^{\nu}_{\Theta}(w)\right) &= \bigcup_{s\in\WC_{\Theta(\phi)}}\bigcup_{(u,x)\in\st^{\nu}_{\Theta}(sw)\cap\MC_{\Theta}(w)}\Lambda^i_{\Ly,\Theta}(u,x) = \bigcup_{\stackrel{s\in\WC_{\Theta(\phi)}}{u\in\Omega}}\left\{\beta\left(\lambda^+(u)\right): \beta\in \Pi^i_{\phi,\Theta,sw} \right\}\\
	&= \bigcup_{\stackrel{s\in\WC_{\Theta(\phi)}}{u\in\Omega}}\bigl\{\beta\left(\lambda^+(u)\right): \beta\in s\left(\Pi^i_{\phi,\Theta,w}\right) \bigr\} = \bigcup_{\stackrel{s\in\WC_{\Theta(\phi)}}{u\in\Omega}}\bigl\{\alpha\left(s^{-1}\lambda^+(u)\right): \alpha\in \Pi^i_{\phi,\Theta,w} \bigr\}\\
	&= \bigcup_{\alpha\in \Pi^i_{\phi,\Theta,w}}\alpha\left(\bigcup_{u\in\Omega}\WC_{\Theta(\phi)}\lambda^+(u)\right)
	 \subset \bigcup_{\alpha\in \Pi^i_{\phi,\Theta,w}}\alpha\left(\bigcup_{u\in\UC_{\reg}}\WC_{\Theta(\phi)}\lambda^+(u)\right)
	= \bigcup_{\alpha\in \Pi^i_{\phi,\Theta,w}}\alpha\left(\Lambda_{\Ly}^+\right).%
\end{align*}
The last identity follows from Proposition \ref{prop_spectralprops}(i). Hence, the proof is complete.
\end{proof}
	
The above relation between the Lyapunov exponents of the system and the $\fa$-Lyapunov exponents allows us to introduce the notion of the Morse spectrum on a Morse component as follows. For any Morse component $\MC_{\Theta}(w)\subset\UC\times\F_{\Theta}$ of the control flow and $i=+,0,-$,%
\begin{equation*}
  \Lambda^i_{\Mo}\left(\MC_{\Theta}(w)\right) := \bigcup_{\alpha \in \Pi^i_{\phi,\Theta,w}}\alpha\left(\Lambda_{\Mo}^+\right)%
\end{equation*}
is called the \emph{Morse spectrum} of $\Sigma_{\Theta}$ on $\MC_{\Theta}(w)$ in the direction of $\EC^i_{\Theta,w}$.%
	
The next result shows that $\Lambda^+_{\Mo}\left(\MC_{\Theta}(w)\right)$, $\Lambda^0_{\Mo}(\MC_{\Theta}(w))$ and $\Lambda^-_{\Mo}(\MC_{\Theta}(w))$ contain all the asymptotic information of the system provided by the Lyapunov exponents in direction of the stable, center and unstable bundles for the associated Morse component.%
		
\begin{theorem}\label{central}
For $i = +,0,-$ the following statements hold:%
\begin{itemize}
\item[(i)] $\Lambda^i_{\Mo}\left(\MC_{\Theta}(w)\right)$ is a compact subset of $\R$;%
\item[(ii)] $\Lambda^0_{\Mo}\left(\MC_{\Theta}(w)\right)$ is either empty or a symmetric interval containing the origin;%
\item[(iii)] The limits%
\begin{equation*}
  \lim_{n\rightarrow+\infty}\frac{1}{n}\max_{(u,x)\in\MC_{\Theta}(w)}\log\left\|(\rmd\varphi_{n,u})_{|\EC^i_{\Theta,w}(u,x)}\right\|\; \;\;\;\mbox{ and }	\;\;\;\;	\lim_{n\rightarrow+\infty}\frac{1}{n}\min_{(u,x)\in\MC_{\Theta}(w)}\log m\left((\rmd\varphi_{n,u})_{|\EC^i_{\Theta,w}(u,x)}\right)%
\end{equation*}
exist and	belong to $\Lambda^i_{\Mo}\left(\MC_{\Theta}(w)\right)$, provided that this set is nonempty.
\end{itemize}
\end{theorem}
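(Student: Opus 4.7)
The plan is to derive items (i) and (ii) from the structural properties of $\Lambda_{\Mo}^+$ recorded in Proposition \ref{prop_spectralprops}, and to obtain (iii) via Fekete's lemma paired with a variational realization of the extremal growth rates by an ergodic invariant measure, to which Theorem \ref{centralLy} can then be applied. For (i), $\Lambda^i_{\Mo}(\MC_{\Theta}(w))$ is a finite union of sets of the form $\alpha(\Lambda_{\Mo}^+)$ with $\alpha\in\fa^*$ linear; since $\Lambda_{\Mo}^+$ is compact and convex in $\fa$, each such image is a compact interval in $\R$ and the union is therefore compact. For (ii), every $\alpha\in\Pi^0_{\phi,\Theta,w}$ lies in $\langle\Theta(\phi)\rangle$, so the reflection $r_\alpha$ belongs to $\WC_{\Theta(\phi)}$; by the $\WC_{\Theta(\phi)}$-invariance of $\Lambda_{\Mo}^+$ (Proposition \ref{prop_spectralprops}(i)) and the identity $\alpha\circ r_\alpha = -\alpha$ on $\fa$, one obtains $\alpha(\Lambda_{\Mo}^+) = -\alpha(\Lambda_{\Mo}^+)$. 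Each summand is therefore a symmetric compact interval containing $0$, and a finite union of such intervals is again a symmetric compact interval containing $0$.

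For (iii), set $a_n := \max_{(u,x)\in\MC_{\Theta}(w)} \log\|(\rmd\varphi_{n,u})_{|\EC^i_{\Theta,w}(u,x)}\|$. The chain rule, $\phi_n$-invariance of $\MC_{\Theta}(w)$, and submultiplicativity of the operator norm give $a_{n+m}\leq a_n+a_m$, so Fekete's lemma produces $\lambda_+ := \lim_n a_n/n$; finiteness follows from continuity of the derivative cocycle on the compact set $\MC_{\Theta}(w)$. Dually, $g_n := \min_{(u,x)} \log m((\rmd\varphi_{n,u})_{|\EC^i_{\Theta,w}(u,x)})$ is superadditive via $m(AB)\geq m(A)m(B)$, producing $\lambda_- := \lim g_n/n$. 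To realize $\lambda_+$ as an element of $\Lambda^i_{\Mo}(\MC_{\Theta}(w))$, pick maximizers $(u_n,x_n)$ for $a_n$, form the empirical measures $\mu_n := \frac{1}{n}\sum_{k=0}^{n-1}\delta_{\phi_k(u_n,x_n)}$ on $\MC_{\Theta}(w)$, and extract a weak$^*$ limit $\mu^*$, which is $\phi_1$-invariant by Krylov-Bogolyubov. Applying Theorem \ref{MET} and a standard variational argument identifies the top Lyapunov exponent of $\mu^*$ in the $\EC^i_{\Theta,w}$-direction with $\lambda_+$. Passing to an ergodic component $\nu^*$ realizing this exponent and using that $\nu^*(\MC^{\nu^*}_{\Theta}(w)) = 1$ (Theorem \ref{Teo}), the multiplicative ergodic theorem supplies a point $(u^*,x^*)\in\MC^{\nu^*}_{\Theta}(w)$ and $v\in\EC^i_{\Theta,w}(u^*,x^*)\setminus\{0\}$ with $\lim_n \frac{1}{n}\log|(\rmd\varphi_{n,u^*})_{x^*}v|=\lambda_+$. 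Theorem \ref{centralLy} then gives
\begin{equation*}
 \lambda_+ \in \Lambda^i_{\Ly,\Theta}(u^*,x^*) \subset \bigcup_{\alpha\in\Pi^i_{\phi,\Theta,w}} \alpha(\Lambda_{\Ly}^+) \subset \Lambda^i_{\Mo}(\MC_{\Theta}(w)).
\end{equation*}
The treatment of $\lambda_-$ is dual, performed on the bottom exponent via the conorm (equivalently $m(A)^{-1}=\|A^{-1}\|$ on each fiber).

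The chief obstacle is the ergodic-measure step in (iii): identifying the top (respectively bottom) Lyapunov exponent of $\mu^*$ in the $\EC^i_{\Theta,w}$-direction with the uniform rate $\lambda_+$ (respectively $\lambda_-$), and ensuring that after ergodic decomposition this value is attained at a point satisfying the hypotheses of Theorem \ref{centralLy}. This hinges on upper semicontinuity of the top Lyapunov exponent under weak$^*$ limits of invariant probability measures, together with the attainment of extremal rates by ergodic measures for continuous linear cocycles over a compact invariant set---standard but technical ingredients that must be carefully pieced together.
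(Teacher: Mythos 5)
Your arguments for (i) and (ii) are correct and, for (ii), slightly more economical than the paper's: you extract the fact that each interval $\alpha(\Lambda_{\Mo}^+)$ contains $0$ directly from the symmetry $r_\alpha(\Lambda_{\Mo}^+)=\Lambda_{\Mo}^+$ (valid since $\alpha\in\langle\Theta(\phi)\rangle$ implies $r_\alpha\in\WC_{\Theta(\phi)}$), whereas the paper invokes a separate result (\cite[Cor.~8.6]{SMLS}) for the origin-containment and only then argues symmetry. Both are fine; yours needs one fewer external reference.

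In (iii) you take a genuinely different route from the paper, and the two comparisons are worth spelling out. \emph{Where you differ.} The paper computes the pointwise limits $\lim_n\frac{1}{n}\sigma^i_n(u,x)=\max\Lambda^i_{\Ly,\Theta}(u,x)$ from scratch (Step~1), using Lemma \ref{technical}, Proposition \ref{V}, and Lemma \ref{liminf} to reduce to the cocycle $\Ad(\varphi_{n,u}(g))$ on subspaces of $\fg$; then it appeals to Morris's variational result for continuous subadditive cocycles (\cite[Thm.~A.3]{Ian}) to pass from pointwise to uniform rates. You instead apply the MET directly to the restricted derivative cocycle on the continuous invariant subbundle $\EC^i_{\Theta,w}$; Kingman then identifies its top exponent with the $\nu^*$-a.e.\ limit of $\frac{1}{n}\sigma^i_n$, and Theorem \ref{centralLy} immediately places this value in $\Lambda^i_{\Mo}$. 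This shortcut is clean and avoids redoing the $\fg$-level computations, at the cost of needing to measurably trivialize the subbundle so that the MET applies. \emph{Where the gap is.} Your replacement for the Morris citation --- empirical measures of maximizers, Krylov--Bogolyubov, and ``upper semicontinuity of the top Lyapunov exponent'' --- is not sufficient as stated. Upper semicontinuity of $\mu\mapsto\lambda^{\tp}(\mu)=\inf_n\frac{1}{n}\int\sigma^i_n\,\rmd\mu$ gives $\lambda^{\tp}(\mu^*)\leq\lambda_+$ for the weak$^*$ limit $\mu^*$, which is the wrong direction. To prove $\lambda^{\tp}(\mu^*)\geq\lambda_+$ one needs the standard block-decomposition argument: fix $N$, split the length-$n$ orbit of a maximizer into blocks of length $N$ plus a remainder, use subadditivity to bound $\sigma^i_n$ from below in terms of $\sum_j\sigma^i_N(\phi_{jN}\cdot)$, and show that the empirical Birkhoff average of $\sigma^i_N$ is $\geq N(\lambda_+-\ep)$ along the maximizing subsequence. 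This is exactly the content of \cite[Thm.~A.3]{Ian} that the paper cites. You flag this as ``standard but technical,'' which is accurate, but as written the sketch does not close it; either fill in the block argument or cite the theorem. With that filled in, the rest of your (iii) goes through: the supremum over invariant measures is attained by compactness and upper semicontinuity, an ergodic component $\nu^*$ realizes it, the MET on the restricted cocycle yields $(u^*,x^*)$ and $v\in\EC^i_{\Theta,w}(u^*,x^*)$ with exponent $\lambda_+$, and Theorem \ref{centralLy} together with Proposition \ref{prop_spectralprops}(i) gives $\lambda_+\in\Lambda^i_{\Mo}(\MC_{\Theta}(w))$; the conorm case is genuinely dual.
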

	
\begin{proof} 
(i) Since $\Lambda_{\Mo}(\MC^+,\mathsf{a})$ is a compact subset of $\fa$ and%
\begin{equation*}
  \Lambda^i_{\Mo}\left(\MC_{\Theta}(w)\right) = \bigcup_{\alpha\in\Pi^i_{\phi,\Theta,w}}\alpha\left(\Lambda_{\Mo}^+\right)%
\end{equation*}
is a finite union, $\Lambda^i_{\Mo}(\MC_{\Theta}(w))$ is compact.%
		
(ii) By \cite[Cor.~8.6]{SMLS}, there exists $\lambda\in \Lambda_{\Mo}^+$ such that $\alpha(\lambda)=0$ for all $\alpha\in \langle\Theta(\phi)\rangle$. This implies that $\Lambda^0_{\Mo}(\MC_{\Theta}(w))$ is either empty or a finite union of connected sets each of which contains the origin, hence an interval containing the origin. For the symmetry let $\eta\in \Lambda^0_{\Mo}(\MC_{\Theta}(w))$ and consider $\alpha\in\Pi^0_{\phi,\Theta,w}$ and $\lambda\in \Lambda_{\Mo}^+$ such that $\eta=\alpha(\lambda)$. Since $\alpha$ belongs, in particular, to $\langle\Theta(\phi)\rangle$, we have $r_{\alpha}(\Lambda_{\Mo}^+) = \Lambda_{\Mo}^+$ (cf.~\cite[Cor.~8.5]{SMLS}), implying%
\begin{equation*}
  \Lambda^0_{\Mo}\left(\MC_{\Theta}(w)\right) \ni \alpha(r_{\alpha}(\lambda)) = (r^{-1}_{\alpha}(\alpha))(\lambda) = (-\alpha)(\lambda) = -\alpha(\lambda) = -\eta,%
\end{equation*}
where $r_{\alpha}$ is the orthogonal reflection at $\ker\alpha$. This concludes the proof of (ii).%
		
(iii) For $i=+,0,-$, let us define the subadditive cocycles $\sigma^i, \vartheta^i:\Z\times\MC_{\Theta}(w)\rightarrow\R$,%
\begin{equation*}
  \sigma^i_n(u,x) := \log\left\|(\rmd\varphi_{n,u})_{|\EC^i_{\Theta,w}(u,x)}\right\| \;\;\mbox{ and }\;\;\vartheta^i_n(u,x) := -\log m\left((\rmd\varphi_{n,u})_{|\EC^i_{\Theta,w}(u,x)}\right).%
\end{equation*}
Moreover, let us denote by $M^{\EC}_{\phi}$ the set of the $\phi_1$-invariant ergodic probability measures on the Borelians of $\MC_{\Theta}(w)$. The proof of item (iii) proceeds in two steps:%
		
\emph{Step 1}: We prove that for any $\rho \in M^{\EC}_{\phi}$ there exists a $\phi_1$-invariant set $\Omega_{\rho}$ of full measure contained in $\MC_{\Theta}(w)\cap\st^{\nu}_{\Theta}(sw)$ for some $s\in\WC_{\Theta(\phi)}$ such that%
\begin{align*}
  \lim_{n\rightarrow+\infty}\frac{1}{n}\sigma^i_n(u,x) &= \max\Lambda^i_{\Ly,\Theta}(u,x) = \max\Lambda^i_{\Ly,\Theta}(\Omega_{\rho}),\\
	\lim_{n\rightarrow+\infty}\frac{1}{n}\vartheta^i_n(u,x) &= -\min\Lambda^i_{\Ly,\Theta}(u,x) = -\min\Lambda^i_{\Ly,\Theta}(\Omega_{\rho})%
\end{align*}
for all $(u,x)\in\Omega_{\rho}$, where $\nu=f_*\rho$ is the push-forward of $\rho$ by $f = (\pi^{\Theta}_1)_{|\MC_{\Theta}(w)}$, the restriction to $\MC_{\Theta}(w)$ of the projection $\pi_1^{\Theta}$ onto the first component of $\UC\times\F_{\Theta}$.%
		
Let $\rho\in M^{\EC}_{\phi}$ and consider its push-forward $\nu = f_*\rho$ as above. Since $f$ projects $\MC_{\Theta}(w)$ onto $\UC$, $\nu$ is an ergodic probability measure on the Borelians of $\UC$ and by the MET (Theorem \ref{MET}) there exists a $\theta_1$-invariant subset $\Omega_{\nu}\subset\UC_{\reg}$ of full $\nu$-measure such that $\lambda^+(u) =: \lambda^+(\nu)$ is constant for all $u\in \Omega_{\nu}$. The set $\Omega_{\rho}:=f^{-1}(\Omega_{\nu})$ is a $\phi_1$-invariant set of full $\rho$-measure. Moreover, since $\Omega_{\rho}\subset\MC^{\nu}_{\Theta}(w)$ and $\MC_{\Theta}^{\nu}(w)$ is given by the disjoint union of the $\phi_1$-invariant measurable sets $\MC_{\Theta}(w)\cap\st_{\Theta}^{\nu}(sw)$ for $s\in\WC_{\Theta(\phi)}$ (see Proposition \ref{MorseStable}), the ergodicity of $\rho$ implies that $\Omega_{\rho}\subset\MC_{\Theta}(w)\cap\st^{\nu}_{\Theta}(sw)$ for some $s\in\WC_{\Theta(\phi)}$ (up to a set of measure zero). Therefore, by Theorem \ref{exponents} we have $\max\Lambda^i_{\Ly,\Theta}(\Omega_{\rho})=\max\Lambda^i_{\Ly,\Theta}(u,x)$ and $\min\Lambda^i_{\Ly,\Theta}(\Omega_{\rho})=\min\Lambda^i_{\Ly,\Theta}(u,x)$ for any $(u,x)\in\Omega_{\rho}$.%

Let then $(u,x)\in\Omega_{\rho}$ and write it as $x = g\cdot z$ with $g\in Z_{\mathsf{h(u)}}\cap N^-_{D(u)}$ and $z\in\fix_{\Theta}(D(u),sw)$ (see Proposition \ref{MorseStable} and Lemma \ref{technical}). If $j_0\in\{1,\ldots,p(u)\}$ is the smallest index such that $U_{j_0}(u)\cap (\fn^i_{\mathsf{h}(u)}\cap\fg_z^{\perp}) \neq \{0\}$, we have $\lambda_{j_0}(u) = \max\Lambda^i_{\Ly,\Theta}(u,x)$, see \eqref{eq_lyapspec}. By the definition of $\sigma^i_n$ and Proposition \ref{centralLy}, we have%
\begin{equation*}
  \liminf_{n\rightarrow\infty}\frac{1}{n}\sigma_n^i(u,x) \geq \lambda_{j_0}(u).%
\end{equation*}
On the other hand, by Lemma \ref{technical}, for any $V\in\EC^i_{\Theta,w}(u,x)$ there exists $X\in \fn^i_{\mathsf{h}(u)}\cap \fg_z^{\perp}$ such that $V = (\rmd[g])_zX(z)$, implying%
\begin{equation*}
  \left\|\left(\rmd\varphi_{n,u}\right)_{|\EC^i_{\Theta,w}(u,x)}\right\| \leq \left\|\Ad(\varphi_{n,u}(g))_{|\fn^i_{\mathsf{h}(u)}\cap \fg_z^{\perp}}\right\|\left\|(\rmd[g])_z^{-1}\right\|.%
\end{equation*}
By \eqref{eq_lyapspec} we have%
\begin{equation*}
  \lambda_{j_0}(u) = \max\left\{\lambda_j(u): \;\;U_j(u)\cap \left(\fn^i_{\mathsf{h}(u)}\cap\fg_z^{\perp}\right)\neq\{0\}\right\},%
\end{equation*}
and so (using Proposition \ref{K-invariant})%
\begin{equation*}
  \left\|\left(\rmd\varphi_{n,u}\right)_{|\EC^i_{\Theta,w}(u,x)}\right\| \leq \left\|\Ad(\varphi_{n,u}(g))_{|V_{j_0}(u)}\right\|\left\|\Ad\left(g^{-1}\right)\right\|,%
\end{equation*}
which by Proposition \ref{V} implies%
\begin{equation*}
  \limsup_{n\rightarrow+\infty}\frac{1}{n}\sigma^i_n(u,x) \leq \lim_{n\rightarrow+\infty}\frac{1}{n}\log\left\|\Ad(\varphi_{n,u}(g))_{|V_{j_0}(u)}\right\| = \lambda_{j_0}(u),%
\end{equation*}
and therefore%
\begin{equation*}
  \lim_{n\rightarrow+\infty}\frac{1}{n}\sigma^i_n(u,x) = \lambda_{j_0}(u).%
\end{equation*}
For $\vartheta_n$ we have by definition that%
\begin{equation*}
  \liminf_{n\rightarrow\infty}\frac{1}{n}\vartheta_n(u,x) \geq -\min \Lambda^i_{\Ly,\Theta}(u,x).%
\end{equation*}
Moreover, by Lemma \ref{technical} it holds that%
\begin{equation*}
  m\left((\rmd\varphi_{n,u})_{|\EC^i_{\Theta,w}(u,x)}\right) = \inf_{v\in \EC^i_{\Theta,w}(u,x), \,|v|=1}\left|(\rmd\varphi_{n,u})_xv\right| \geq \inf_{\stackrel{X\in\fg_z^{\perp} \cap \fn^i_{\mathsf{h}(u)}; c_1\leq |X|\leq c_2}{Y\in\fg_z}}\left|\Ad(\varphi_{n, u}(g))(X-Y)\right|,%
\end{equation*}
where $c_1 = \|(\rmd[g])_z\|^{-1}$ and $c_2 = \|(\rmd[g])_z^{-1}\|$ (see \eqref{ring}). By Lemma \ref{liminf}, for any $\ep>0$ there exist a constant $C>0$ and an integer $n_0>0$ satisfying%
\begin{equation*}
  \inf_{\stackrel{X\in \fg_z^{\perp} \cap \fn^i_{\mathsf{h}(u)}; c_1\leq|X|\leq c_2}{Y\in\fg_z}}\left|\Ad(\varphi_{n,u}(g))(X-Y)\right| \geq C\rme^{n\left(\min\Lambda^i_{\Ly,\Theta}(u,x)-\varepsilon\right)} \;\; \mbox{ for all }\;n\geq n_0.%
\end{equation*}
Therefore,%
\begin{equation*}
  \limsup_{n\rightarrow+\infty}\frac{1}{n}\vartheta^i_n(u,x) \leq -\min\Lambda^i_{\Ly,\Theta}(u,x) + \varepsilon,%
\end{equation*}
which concludes Step 1, since $\varepsilon>0$ was arbitrary.%

\emph{Step 2}: We prove that the limits%
\begin{equation*}
  \lim_{n\rightarrow+\infty}\frac{1}{n}\max_{(u,x)\in\MC_{\Theta}(w)}\log\left\|(\rmd\varphi_{n,u})_{|\EC^i_{\Theta,w}(u,x)}\right\|\; \;\;\;\mbox{ and }	\;\;\;\;	\lim_{n\rightarrow+\infty}\frac{1}{n}\min_{(u,x)\in\MC_{\Theta}(w)}\log m\left((\rmd\varphi_{n,u})_{|\EC^i_{\Theta,w}(u,x)}\right)%
\end{equation*}
exist and belong to $\Lambda^i_{\Mo}(\MC_{\Theta}(w))$.%

Since $\sigma^i_n$ is a continuous subadditive cocycle, by \cite[Thm.~A.3]{Ian} we have%
\begin{equation*}
  \lim_{n\rightarrow+\infty}\frac{1}{n}\max_{(u,x)\in\MC_{\Theta}(w)}\sigma^i_n(u,x) = \sup_{\nu\in M^{\EC}_{\phi}}\left\{\lim_{n\rightarrow+\infty}\frac{1}{n}\int\sigma^i_n\,\rmd\nu \right\}.%
\end{equation*}
Using Step 1 and the dominated convergence theorem (using that $\frac{1}{n}\sigma^i_n(u,x) \leq \max_{(v,y) \in \MC_{\Theta}(w)}\sigma^i_1(v,y)$ and $\frac{1}{n}\sigma^i_n(u,x) \geq -\frac{1}{n}\vartheta^i_n(u,x) \geq -\max_{(v,y)\in\MC_{\Theta}(w)}\vartheta^i_1(v,y)$ by subadditivity), we have%
\begin{equation*}
  \lim_{n\rightarrow+\infty}\frac{1}{n}\int\sigma^i_n\,\rmd\nu = \int\lim_{n\rightarrow+\infty}\frac{1}{n}\sigma^i_n\,\rmd\nu = \max\Lambda^i_{\Ly}(\Omega_{\rho}) \;\;\mbox{ for any }\;\rho\in M^{\EC}_{\phi}.%
\end{equation*}
Since $\Lambda^i_{\Ly}(\Omega_{\rho})\subset \Lambda^i_{\Mo}(\MC_{\Theta}(w))$ for any $\rho\in M^{\EC}_{\phi}$ and $\Lambda^i_{\Mo}(\MC_{\Theta}(w))$ is compact, we obtain%
\begin{equation*}
  \lim_{n\rightarrow+\infty}\frac{1}{n}\max_{(u,x)\in\MC_{\Theta}(w)}\log\left\|(\rmd\varphi_{n,u})_{|\EC^i_{\Theta,w}(u,x)}\right\| =\lim_{n\rightarrow+\infty}\frac{1}{n}\max_{(u,x)\in\MC_{\Theta}(w)}\sigma_n^i(u,x) \in \Lambda^i_{\Mo}\left(\MC_{\Theta}(w)\right).%
\end{equation*}
Analogously, using that $\vartheta^i$ is a subadditive cocycle, we get as above (using Step 1) that%
\begin{equation*}
  \lim_{n\rightarrow+\infty}\frac{1}{n}\max_{(u,x)\in\MC_{\Theta}(w)}\vartheta_n^i(u,x) \in -\Lambda^i_{\Mo}\left(\MC_{\Theta}(w)\right),%
\end{equation*}
and consequently%
\begin{equation*}
  \lim_{n\rightarrow+\infty}\frac{1}{n}\min_{(u,x)\in\MC_{\Theta}(w)}\log m\left((\rmd\varphi_{n,u})_{|\EC^i_{\Theta,w}(u,x)}\right) = -\lim_{n\rightarrow+\infty}\frac{1}{n}\max_{(u,x)\in\MC_{\Theta}(w)}\vartheta_n^i(u,x)\in \Lambda^i_{\Mo}\left(\MC_{\Theta}(w)\right),%
\end{equation*}
concluding the proof.
\end{proof}

As a direct corollary we have the following:%
	
\begin{corollary}
All limits of the form%
\begin{equation*}
  \lambda = \lim_{k\rightarrow+\infty}\frac{1}{n_k}\log\left\|(\rmd\varphi_{n_k,u_k})_{|\EC^i_{\Theta,w}(u_k,x_k)}\right\|,\quad (u_k,x_k) \in \MC_{\Theta}(w),\quad n_k \rightarrow + \infty,%
\end{equation*}
belong to $\Lambda^i_{\Mo}(\MC_{\Theta}(w))$.
\end{corollary}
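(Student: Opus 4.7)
The plan is to realize the limit $\lambda$ as the value furnished by the subadditive ergodic theorem for a suitable $\phi_1$-invariant ergodic measure on $\MC_{\Theta}(w)$, and then invoke Step 1 of the proof of Theorem \ref{central} to place it in $\Lambda^i_{\Mo}(\MC_{\Theta}(w))$.

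First I would form the empirical measures $\mu_k := \frac{1}{n_k}\sum_{j=0}^{n_k-1}\delta_{\phi_j(u_k,x_k)}$ on $\MC_{\Theta}(w)$ and, by weak-$*$ compactness, extract a subsequential limit $\mu$, which is automatically $\phi_1$-invariant by Krylov--Bogolubov. Exploiting subadditivity and continuity of the cocycle $\sigma^i_n(u,x):=\log\|(\rmd\varphi_{n,u})_{|\EC^i_{\Theta,w}(u,x)}\|$, for every fixed $N$ one has
\begin{equation*}
\frac{1}{n_k}\sigma^i_{n_k}(u_k,x_k) \leq \frac{1}{N}\int \sigma^i_N\,\rmd\mu_k + O\!\left(\tfrac{N}{n_k}\right),
\end{equation*}
and letting $k\to\infty$ and then $N\to\infty$, together with Kingman's subadditive ergodic theorem, this yields $\lambda \leq \int f\,\rmd\mu$ where $f := \lim_n \tfrac{1}{n}\sigma^i_n$ is defined $\mu$-a.e. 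The matching lower bound $\lambda \geq -\int g\,\rmd\mu$ with $g := \lim_n \tfrac{1}{n}\vartheta^i_n$ follows from the analogous argument for the subadditive cocycle $\vartheta^i_n$, combined with $\sigma^i_n \geq -\vartheta^i_n$.

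By ergodic decomposition the argument reduces to the case of ergodic $\mu$, on which $f$ and $-g$ are $\mu$-a.e.\ constant and, by Step 1 of the proof of Theorem \ref{central}, equal to $\max \Lambda^i_{\Ly,\Theta}(u,x)$ and $\min \Lambda^i_{\Ly,\Theta}(u,x)$ respectively, both of which lie in $\Lambda^i_{\Mo}(\MC_{\Theta}(w))$ by Theorem \ref{centralLy}. The main obstacle is precisely this reduction to ergodic $\mu$, since a priori $\lambda$ is only a convex integral of such values and $\Lambda^i_{\Mo}(\MC_{\Theta}(w)) = \bigcup_{\alpha\in \Pi^i_{\phi,\Theta,w}}\alpha(\Lambda^+_{\Mo})$ is only a finite union of intervals. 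To overcome it I would refine the subsequence $(u_k,x_k,n_k)$ using chain transitivity of $\phi$ on $\MC_{\Theta}(w)$ so that the empirical measures concentrate on a single ergodic component; compactness of $\Lambda^i_{\Mo}(\MC_{\Theta}(w))$ from Theorem \ref{central}(i) then closes the argument.
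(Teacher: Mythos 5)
Your detour through empirical measures and Kingman's theorem is not wrong as far as it goes, but it essentially re-derives Step 2 of the proof of Theorem \ref{central}(iii) rather than applying that theorem as a black box. The paper's proof is much shorter: since
\begin{equation*}
\lambda \le \limsup_{n\to\infty}\frac{1}{n}\max_{(u,x)\in\MC_{\Theta}(w)}\sigma^i_n(u,x)
= \lim_{n\to\infty}\frac{1}{n}\max_{(u,x)\in\MC_{\Theta}(w)}\sigma^i_n(u,x)\in\Lambda^i_{\Mo}(\MC_{\Theta}(w)),
\end{equation*}
one immediately gets $\lambda\le\max\Lambda^i_{\Mo}(\MC_{\Theta}(w))$, and the analogous argument with $\vartheta^i_n$ gives $\lambda\ge\min\Lambda^i_{\Mo}(\MC_{\Theta}(w))$. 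That is the entire proof in the paper. Your own empirical-measure argument, carried through to $\lambda\le\int f\,\rmd\mu$ and $\lambda\ge-\int g\,\rmd\mu$ and then integrating over ergodic components, yields exactly these same two inequalities and nothing stronger, so it reaches the same endpoint by a longer route.

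The genuine gap is your final paragraph. You correctly notice that the bounds $\min\Lambda^i_{\Mo}\le\lambda\le\max\Lambda^i_{\Mo}$ only place $\lambda$ in the convex hull, but your proposed repair --- refining the subsequence using chain transitivity so that the empirical measures concentrate on a single ergodic component --- does not work. Chain transitivity of $\phi$ on $\MC_{\Theta}(w)$ gives no control over which invariant measures arise as weak-$*$ limits of the $\mu_k$: the limit measure is determined (along a convergent subsequence) by the orbit segments you started with, and there is no freedom to further refine and land on an ergodic one. Non-ergodic limits are generic for chain transitive systems with more than one ergodic measure, and nothing about the hypotheses excludes them here. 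So this step, as written, does not close the argument. The paper simply asserts that the two extremal bounds suffice (which is unproblematic for $i=0$, where Theorem \ref{central}(ii) shows $\Lambda^0_{\Mo}(\MC_{\Theta}(w))$ is an interval); if you want to handle $i=\pm$ with the same care you flagged, the resolution must come from the structure of $\Lambda^i_{\Mo}(\MC_{\Theta}(w))=\bigcup_{\alpha\in\Pi^i_{\phi,\Theta,w}}\alpha(\Lambda^+_{\Mo})$, not from a chain-transitivity refinement of the orbit segments.
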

	
\begin{proof}
Theorem \ref{central}(iii) yields%
\begin{align*}
  \lambda &= \lim_{k\rightarrow+\infty}\frac{1}{n_k}\log\left\|(\rmd\varphi_{n_k,u_k})_{|\EC_{\Theta,w}^i(u_k,x_k)}\right\| \leq \limsup_{n\rightarrow+\infty}\frac{1}{n}\max_{(u,x)\in\MC_{\Theta}(w)}\sigma^i_n(u,x)\\
	&= \lim_{n\rightarrow+\infty}\frac{1}{n}\max_{(u,x)\in\MC_{\Theta}(w)}\sigma_n^i(u,x)\in \Lambda^i_{\Mo}\left(\MC_{\Theta}(w)\right),%
\end{align*}
and consequently $\lambda\leq \max\Lambda^i_{\Mo}(\MC_{\Theta}(w))$. On the other hand, Theorem \ref{central}(iii) also implies%
\begin{align*}
  -\lambda &= -\lim_{k\rightarrow+\infty}\frac{1}{n_k}\log\left\|(\rmd\varphi_{n_k,u_k})_{|\EC_{\Theta,w}^i(u_k,x_k)}\right\| \leq \limsup_{n\rightarrow+\infty}\frac{1}{n}\max_{(u,x)\in \MC_{\Theta}(w)}\vartheta^i_{n}(u,x)\\
		&= \lim_{n\rightarrow+\infty}\frac{1}{n}\max_{(u, x)\in \MC_{\Theta}(w)}\vartheta^i_{n}(u, x)\in -\Lambda^i_{\Mo}\left(\MC_{\Theta}(w)\right).%
\end{align*}
Therefore, $\lambda\geq \min\Lambda^i_{\Mo}(\MC_{\Theta}(w))$ and hence $\lambda\in\Lambda^i_{\Mo}(\MC_{\Theta}(w))$, concluding the proof.
\end{proof}
	
The next result shows that the extremal points of the set $\Lambda_{\Mo}(\MC_{\Theta}(w))$ are Lyapunov exponents.%
	
\begin{proposition}\label{extremal}
For $i=+,0,-$, the extremal points of $\Lambda^i_{\Mo}(\MC_{\Theta}(w))$ are Lyapunov exponents.
\end{proposition}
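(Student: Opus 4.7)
The set $\Lambda^i_{\Mo}(\MC_\Theta(w))=\bigcup_{\alpha\in\Pi^i_{\phi,\Theta,w}}\alpha(\Lambda_{\Mo}^+)$ is a finite union of compact intervals in $\R$, each being the image of the compact convex set $\Lambda_{\Mo}^+$ under a linear functional. Its extremal points in $\R$ are therefore just its maximum and minimum. I focus on $\max\Lambda^i_{\Mo}(\MC_\Theta(w))$; the minimum is treated analogously, using the cocycle $\vartheta^i_n$ in place of $\sigma^i_n$. The strategy is to identify $\max\Lambda^i_{\Mo}(\MC_\Theta(w))$ with the limit
\begin{equation*}
  L := \lim_{n\to\infty}\frac{1}{n}\max_{(u,x)\in\MC_\Theta(w)}\sigma^i_n(u,x)
\end{equation*}
produced by Theorem~\ref{central}(iii), and then to realise $L$ as an actual Lyapunov exponent at a single point.

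For the realisation I apply the subadditive variational principle already exploited in Step~2 of the proof of Theorem~\ref{central}(iii): the map $\rho\mapsto\lim_n\frac{1}{n}\int\sigma^i_n\,\rmd\rho$ is upper semicontinuous on the compact space of $\phi$-invariant Borel probability measures on $\MC_\Theta(w)$, so the supremum $L=\sup_{\rho\in M^{\EC}_\phi}\lim_n\frac{1}{n}\int\sigma^i_n\,\rmd\rho$ is attained at some ergodic $\rho^*$. Step~1 of the same proof then shows that for any $\rho^*$-generic $(u^\sharp,x^\sharp)\in\Omega_{\rho^*}$ one has $\lim_n\frac{1}{n}\sigma^i_n(u^\sharp,x^\sharp)=\max\Lambda^i_{\Ly,\Theta}(u^\sharp,x^\sharp)=L$, displaying $L$ as a bona fide Lyapunov exponent at $(u^\sharp,x^\sharp)$.

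It remains to establish $L=\max\Lambda^i_{\Mo}(\MC_\Theta(w))$. The inequality $L\leq\max\Lambda^i_{\Mo}$ is immediate from Theorem~\ref{central}(iii). For the reverse, I apply Krein--Milman to $\Lambda_{\Mo}^+=\overline{\mathrm{conv}}(\Lambda_{\Ly}^+)$: the linear functional $\alpha^*\in\Pi^i_{\phi,\Theta,w}$ realising $\max\Lambda^i_{\Mo}$ attains its supremum on $\Lambda_{\Mo}^+$ at an extremal point of $\Lambda_{\Mo}^+$, which must lie in $\Lambda_{\Ly}^+$. By Proposition~\ref{prop_spectralprops}(i) such an extremal point equals $s\lambda^+(u^*)$ for some $u^*\in\UC_{\reg}$ and $s\in\WC_{\Theta(\phi)}$, so
\begin{equation*}
  \max\Lambda^i_{\Mo}(\MC_\Theta(w)) = (s^{-1}\alpha^*)(\lambda^+(u^*)),
\end{equation*}
with $s^{-1}\alpha^*\in s^{-1}\Pi^i_{\phi,\Theta,w}=\Pi^i_{\phi,\Theta,s^{-1}w}$ (inherited from the $\WC_{\Theta(\phi)}$-invariance of the sets $\Pi^{\pm}\setminus\langle\Theta(\phi)\rangle$ and $\langle\Theta(\phi)\rangle$). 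Given an ergodic invariant $\nu$ with $\lambda^+(\nu)=\lambda^+(u^*)$, Proposition~\ref{MorseStable} supplies $(u',x')\in\MC_\Theta^\nu(w)\cap\st^\nu_\Theta(s^{-1}w)$, and Theorem~\ref{centralLy} then yields $\max\Lambda^i_{\Mo}=(s^{-1}\alpha^*)(\lambda^+(u'))\in\Lambda^i_{\Ly,\Theta}(u',x')$, so $\max\Lambda^i_{\Mo}\leq\max\Lambda^i_{\Ly,\Theta}(u',x')\leq L$.

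The main obstacle is producing such an ergodic $\nu$: an arbitrary $u^*\in\UC_{\reg}$ need not be generic for any ergodic invariant measure, so the equality $\lambda^+(\nu)=\lambda^+(u^*)$ is not automatic. This is resolved by an ergodic-decomposition argument applied to a weak-$*$ accumulation point of the empirical measures $\frac{1}{n}\sum_{k=0}^{n-1}\delta_{\theta_k u^*}$, combined with Kingman's theorem for the subadditive cocycle $\log\rmA^+$ and the linearity of each root: these ingredients together yield $\sup\{\alpha(\lambda^+(u)):u\in\UC_{\reg}\}=\sup\{\alpha(\lambda^+(\nu)):\nu\text{ ergodic invariant}\}$ for every $\alpha\in\fa^*$, which is enough because both suprema are attained by the compactness and upper semicontinuity already used above.
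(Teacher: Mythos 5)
Your approach converges with the paper's at the Krein--Milman step but then diverges in a way that creates a genuine gap, which you correctly flag as "the main obstacle." After observing that $\max\alpha(\Lambda_{\Mo}^+)=\alpha(\lambda)$ for an extreme point $\lambda$ of the convex compact set $\Lambda_{\Mo}^+$, the paper simply cites \cite[Cor.~5.2]{ASM1}: every extreme point of $\Lambda_{\Mo}^+$ already has the form $s\lambda^+(\nu)$ for some $s\in\WC_{\Theta(\phi)}$ and some \emph{ergodic} invariant measure $\nu$. Theorem~\ref{centralLy} then finishes the proof in one line. You instead apply Milman's theorem to land in $\Lambda_{\Ly}^+=\bigcup_{u\in\UC_{\reg}}\WC_{\Theta(\phi)}\lambda^+(u)$, which produces only a \emph{regular} point $u^*$ rather than an ergodic measure; the remaining work of your proof is an attempt to reconstruct from scratch exactly what \cite[Cor.~5.2]{ASM1} supplies.

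That reconstruction is not complete as written. You propose Kingman's theorem for "the subadditive cocycle $\log\rmA^+$" together with ergodic decomposition of empirical measures along $u^*$, but $\log\rmA^+$ is $\fa$-valued, and a scalar cocycle $\alpha\circ\log\rmA^+$ is not obviously subadditive for an arbitrary root $\alpha\in\Pi^i_{\phi,\Theta,w}$: the triangle inequality for the Cartan projection is a Weyl-group-convexified one, giving subadditivity only for functionals in the dual cone of $\cl\fa^+$, not for every root. Even granting this, the asserted identity $\sup_{u\in\UC_{\reg}}\alpha(\lambda^+(u))=\sup_{\nu \text{ ergodic}}\alpha(\lambda^+(\nu))$ is stated rather than proved, and it is precisely the content that the paper obtains from \cite[Cor.~5.2]{ASM1}. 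Absent that ingredient the argument does not close. Two secondary remarks: (a) your paragraphs introducing $L=\lim_n\frac{1}{n}\max\sigma^i_n$ are a valid but unnecessary detour, since once you establish $\max\Lambda^i_{\Mo}(\MC_\Theta(w))\in\Lambda^i_{\Ly,\Theta}(u',x')$ the proposition follows directly without showing $L=\max\Lambda^i_{\Mo}$; (b) the Weyl-group bookkeeping $s^{-1}\Pi^i_{\phi,\Theta,w}=\Pi^i_{\phi,\Theta,s^{-1}w}$ and the nonemptiness of $\MC^{\nu}_{\Theta}(w)\cap\st^{\nu}_{\Theta}(s^{-1}w)$, via $\fix_{\Theta}(D(u),s^{-1}w)\subset\fix_{\Theta}(\mathsf{h}(u),w)\cap\st_{\Theta}(D(u),s^{-1}w)$, are both correct.
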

	
\begin{proof}
We show the result only for $\max\Lambda^i_{\Mo}(\MC_{\Theta}(w))$, since the proof for $\min\Lambda^i_{\Mo}(\MC_{\Theta}(w))$ works analogously. By the definition of $\Lambda^i_{\Mo}(\MC_{\Theta}(w))$, we have%
\begin{equation*}
   \max\Lambda^i_{\Mo}\left(\MC_{\Theta}(w)\right) = \max_{\alpha\in\Pi^i_{\phi,\Theta,w}}\left\{\max\alpha\left(\Lambda_{\Mo}^+\right)\right\}.%
\end{equation*}
Moreover, since $\Lambda_{\Mo}^+$ is a convex subset of $\fa$ and $\alpha$ is linear, it holds that $\max\alpha(\Lambda_{\Mo}^+) =\alpha(\lambda)$ for some extremal point $\lambda\in \Lambda_{\Mo}^+$. By \cite[Cor.~5.2]{ASM1}, there exists $s\in\WC_{\Theta(\phi)}$ and an ergodic measure $\nu$ on $\UC$ such that $\lambda = s\lambda^+(\nu)$. Theorem \ref{centralLy} then implies%
\begin{equation*}
  \max\Lambda^i_{\Mo}\left(\MC_{\Theta}(w)\right) = \alpha(s\lambda^+(\nu)) \in \Lambda^i_{\Ly,\Theta}(u,x), \;\;\mbox{ where }\;(u,x)\in\MC_{\Theta}^{\nu}(w),%
\end{equation*}
concluding the proof.
\end{proof}
	
\begin{remark}\label{rem_posneg}
By the estimates \eqref{eq_su_estimates} and the above result it holds that%
\begin{equation*}
  \Lambda^-_{\Mo}\left(\MC_{\Theta}(w)\right) \subset (-\infty,-\zeta]\;\;\;\mbox{ and }\;\;\; \Lambda^+_{\Mo}\left(\MC_{\Theta}(w)\right) \subset [\zeta, \infty).%
\end{equation*}
\end{remark}

The above results show that $[\min\Lambda^i_{\Mo}(\MC_{\Theta}(w)),\max\Lambda^i_{\Mo}(\MC_{\Theta}(w))]$ is the smallest connected set containing all the Lyapunov exponents of the system at points in $\MC_{\Theta}(w)$ in the direction of the bundle $\EC^i_{\Theta,w}$. In particular, for the center bundle it holds that $\Lambda^0_{\Mo}(\MC_{\Theta}(w)) = [\min\Lambda^0_{\Mo}(\MC_{\Theta}(w)),\max\Lambda^0_{\Mo}(\MC_{\Theta}(w))]$. In the next section,
we will see that the knowledge of the Morse spectra is enough to characterize partial hyperbolicity of chain control sets.%

\section{Partial hyperbolicity}\label{sec_ph}

According to Definition \ref{def_phs}, the chain control set $E_{\Theta}(w) \subset \F_{\Theta}$ is \emph{partially hyperbolic} with the decomposition%
\begin{equation}\label{eq_splitting}
  T_x\F_{\Theta} = \EC^-_{\Theta,w}(u,x) \oplus \EC^0_{\Theta,w}(u,x) \oplus \EC^+_{\Theta,w}(u,x)%
\end{equation}
if there are constants $c>0$, $\lambda<0<\mu$ and $\lambda<\lambda'\leq\mu'<\mu$ such that for all $t\geq 0$ and $(u,x)\in\MC_{\Theta}(w)$ we have%
\begin{align*}
	\left|(\rmd\varphi_{t,u})_x v\right| &\leq c\rme^{\lambda t}|v| \mbox{\quad for all\ } v\in \EC_{\Theta,w}^-(u,x),\\
	c^{-1}\rme^{\lambda't}|v| \leq &\left|(\rmd\varphi_{t,u})_x v\right| \leq c\rme^{\mu' t}|v| \mbox{\quad for all\ } v\in \EC_{\Theta,w}^0(u, x),\\
	c^{-1}\rme^{\mu t}|v| &\leq \left|(\rmd\varphi_{t,u})_x v\right| \mbox{\quad for all\ } v\in \EC_{\Theta,w}^+(u,x),%
\end{align*}
and, additionally, at least one of the subbundles $\EC_{\Theta,w}^-$ and $\EC_{\Theta,w}^+$ is nontrivial.%


The next result shows that partial hyperbolicity on flag manifolds is equivalent to the assertion that the Lyapunov or Morse spectra corresponding to the three subbundles $\EC^i_{\Theta,w}$ have empty intersections.%
	
\begin{theorem}\label{Teo1}
Let $E_{\Theta}(w)\subset \F_{\Theta}$ be a chain control set of $\Sigma_{\Theta}$ such that $\dim\EC^+_{\Theta,w} + \dim\EC^-_{\Theta,w}\geq 1$. Then the following conditions are equivalent:%
\begin{itemize}
\item[(1)] $E_{\Theta}(w)$ is partially hyperbolic with the decomposition \eqref{eq_splitting}.%
\item[(2)] $\Lambda_{\Mo}^0(\MC_{\Theta}(w)) \cap \Lambda_{\Mo}^-(\MC_{\Theta}(w)) = \Lambda_{\Mo}^0(\MC_{\Theta}(w)) \cap \Lambda_{\Mo}^+(\MC_{\Theta}(w)) = \emptyset$.
\end{itemize}	
\end{theorem}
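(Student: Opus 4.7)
The plan is to translate between the exponential growth rates appearing in Definition \ref{def_phs} and the location of the three Morse spectra $\Lambda_{\Mo}^i(\MC_{\Theta}(w))$, $i = -,0,+$, on the real line. Conditions (i) and (ii) of partial hyperbolicity are already in hand: the subbundles $\EC^i_{\Theta,w}$ are continuous and flow-invariant by their construction in Section \ref{sec_invcs}. Thus the entire content of the theorem is to match the uniform exponential estimates in (iii) with the disjointness condition (2). The crucial bridge is Theorem \ref{central}(iii), which guarantees that the limits
\begin{equation*}
  L^i_+ := \lim_{n\to\infty}\tfrac{1}{n}\max_{(u,x)\in\MC_{\Theta}(w)}\log\|(\rmd\varphi_{n,u})_{|\EC^i_{\Theta,w}(u,x)}\|, \qquad L^i_- := \lim_{n\to\infty}\tfrac{1}{n}\min_{(u,x)\in\MC_{\Theta}(w)}\log m\bigl((\rmd\varphi_{n,u})_{|\EC^i_{\Theta,w}(u,x)}\bigr)
\end{equation*}
exist and lie in $\Lambda_{\Mo}^i(\MC_{\Theta}(w))$.

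For $(1)\Rightarrow (2)$, I would extract from the defining estimate $|(\rmd\varphi_{t,u})_xv|\leq c\rme^{\lambda t}|v|$ on $\EC^-$ the bound $\tfrac{1}{n}\log\|(\rmd\varphi_{n,u})_{|\EC^-}\|\leq \lambda + \tfrac{\log c}{n}$, so Theorem \ref{central}(iii) forces $\max\Lambda_{\Mo}^- \leq \lambda$. The two-sided bound on the center subbundle gives $\Lambda_{\Mo}^0\subset[\lambda',\mu']$, and the conorm bound on $\EC^+$ yields $\min\Lambda_{\Mo}^+ \geq \mu$. Since $\lambda<\lambda'\leq\mu'<\mu$, the three sets are confined to pairwise disjoint subsets of $\R$, which gives (2).

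For $(2)\Rightarrow(1)$, I would first use Theorem \ref{central}(ii) to write $\Lambda_{\Mo}^0$ as a (possibly empty) compact interval $[a,b]$ containing $0$. Remark \ref{rem_posneg} together with the disjointness hypothesis then gives $\max\Lambda_{\Mo}^- < a \leq b < \min\Lambda_{\Mo}^+$ (with the obvious conventions if $\Lambda_{\Mo}^0=\emptyset$). Set $\lambda' := a$, $\mu' := b$, and pick any $\lambda\in(\max\Lambda_{\Mo}^-,\lambda')$ and $\mu\in(\mu',\min\Lambda_{\Mo}^+)$. It remains to promote these inequalities to uniform estimates with a single multiplicative constant $c$. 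For this I apply the following standard argument to each subbundle: since $L^-_+\leq \max\Lambda_{\Mo}^- < \lambda$, there is $N$ with $\max_{(u,x)}\log\|(\rmd\varphi_{n,u})_{|\EC^-}\|\leq n\lambda$ for all $n\geq N$; the finitely many initial terms, which are bounded by continuity of $(u,x)\mapsto\|(\rmd\varphi_{n,u})_{|\EC^-}\|$ on the compact set $\MC_{\Theta}(w)$, are absorbed into a constant $c_-$. The same argument applied with $L^0_+\leq \mu'< \mu'$ (trivially), $-L^0_- \leq -\lambda' < -\lambda'$ (trivially), and to $-L^+_- \leq -\min\Lambda_{\Mo}^+ < -\mu$ produces the remaining three uniform bounds. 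Extension from integer times $n$ to real $t\geq 0$ is then routine: writing $t = n+s$ with $s\in[0,1)$ and using the cocycle identity, the fractional-time factor is uniformly bounded by compactness and continuity, and can be absorbed into the constant $c$.

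The main obstacle is the second direction, specifically the promotion of asymptotic Morse-spectrum information into uniform finite-time exponential estimates with a single multiplicative constant. The delicate point is that the suprema and infima over $\MC_{\Theta}(w)$ must control $\|(\rmd\varphi_{n,u})_{|\EC^i}\|$ \emph{uniformly} in $(u,x)$, not merely along each orbit; this is exactly what Theorem \ref{central}(iii) delivers, via the subadditivity of the cocycles $\sigma^i_n$ and $\vartheta^i_n$ established in the previous section. Once that uniform step is in place, the choices of $\lambda,\lambda',\mu',\mu$ above automatically produce the gap $\lambda<\lambda'\leq\mu'<\mu$ required in Definition \ref{def_phs}, and the nondegeneracy hypothesis $\dim\EC^+_{\Theta,w}+\dim\EC^-_{\Theta,w}\geq 1$ ensures we are not in the trivial case excluded by that definition.
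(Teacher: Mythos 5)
Your overall architecture matches the paper's proof: in one direction translate the uniform exponential estimates into containments of the three Morse spectra, and in the other direction use Theorem \ref{central}(iii) to promote a spectral gap into uniform finite-time estimates. But there are two concrete places where the argument as written does not close.

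First, in $(1)\Rightarrow(2)$ you argue that since $\tfrac{1}{n}\log\|(\rmd\varphi_{n,u})_{|\EC^-_{\Theta,w}}\| \leq \lambda + \tfrac{\log c}{n}$, ``Theorem \ref{central}(iii) forces $\max\Lambda^-_{\Mo} \leq \lambda$.'' But Theorem \ref{central}(iii) only asserts that the limit $L^-_+$ \emph{belongs to} $\Lambda^-_{\Mo}$; it does not say $L^-_+ = \max\Lambda^-_{\Mo}$, nor does it directly bound the rest of the set. What is actually needed here is Proposition \ref{extremal}: the extremal points of $\Lambda^i_{\Mo}(\MC_{\Theta}(w))$ are Lyapunov exponents, and the uniform estimate bounds every Lyapunov exponent in the $\EC^-$-direction by $\lambda$, hence $\max\Lambda^-_{\Mo}\leq\lambda$. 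The paper invokes Proposition \ref{extremal} for exactly this step, and your argument needs it too.

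Second, and more seriously, in $(2)\Rightarrow(1)$ you set $\lambda' := \min\Lambda^0_{\Mo}$ and $\mu' := \max\Lambda^0_{\Mo}$. Combining Theorem \ref{central}(iii) with Proposition \ref{extremal} shows $L^0_+ = \max\Lambda^0_{\Mo} = \mu'$ exactly, so the required strict inequality $L^0_+ < \mu'$ for the absorption argument fails. Indeed, $\bigl(\max_{(u,x)}\sigma^0_n(u,x)\bigr)_n$ is a subadditive sequence and therefore converges to its limit $L^0_+$ \emph{from above}; in general $\max\sigma^0_n - n\mu'$ need not be bounded (it can diverge like $\log n$), so no single constant $c$ gives $\sigma^0_n(u,x)\leq n\mu' + \log c$ uniformly. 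Your phrase ``$L^0_+ \leq \mu' < \mu'$ (trivially)'' is a symptom of this: it is vacuous as written and hides precisely the place the argument breaks. The paper avoids the problem by choosing $\lambda'$ strictly below $\min\Lambda^0_{\Mo}$ and $\mu'$ strictly above $\max\Lambda^0_{\Mo}$ (with room to do so supplied by Remark \ref{rem_posneg} and the disjointness hypothesis); you should do the same. With those two corrections the proof goes through and is the same as the paper's.
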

	
\begin{proof}
Assume that $E_{\Theta}(w)$ is partially hyperbolic. Then, for any $\theta_1$-invariant probability measure $\nu$ on $\UC$,%
\begin{equation*}
  \Lambda_{\Ly}^-\left(\MC^{\nu}_{\Theta}(w)\right) \subset (-\infty,\lambda], \;\;\Lambda_{\Ly}^0\left(\MC^{\nu}_{\Theta}(w)\right)\subset [\lambda', \mu']\;\;\mbox{ and }\;\; \Lambda_{\Ly}^+\left(\MC^{\nu}_{\Theta}(w)\right) \subset [\mu,+\infty),%
\end{equation*}
where the constants $\lambda < \lambda' \leq \mu' < \mu$ are given by the above definition.	By Proposition \ref{extremal}, we then have%
\begin{equation*}
  \Lambda_{\Mo}^-\left(\MC_{\Theta}(w)\right) \subset (-\infty, \lambda], \;\;\Lambda_{\Mo}^0\left(\MC_{\Theta}(w)\right) \subset [\lambda',\mu']\;\;\mbox{ and }\;\; \Lambda_{\Mo}^+\left(\MC_{\Theta}(w)\right) \subset [\mu, +\infty),%
\end{equation*}
and consequently%
\begin{equation*}
  \Lambda_{\Mo}^0\left(\MC_{\Theta}(w)\right) \cap \Lambda_{\Mo}^-\left(\MC_{\Theta}(w)\right) = \Lambda_{\Mo}^0\left(\MC_{\Theta}(w)\right) \cap \Lambda_{\Mo}^+\left(\MC_{\Theta}(w)\right) = \emptyset.%
\end{equation*}
Reciprocally, if $\Lambda_{\Mo}^0(\MC_{\Theta}(w)) \cap \Lambda_{\Mo}^-(\MC_{\Theta}(w)) = \Lambda_{\Mo}^0(\MC_{\Theta}(w)) \cap \Lambda_{\Mo}^+(\MC_{\Theta}(w)) = \emptyset$, using the observation in Remark \ref{rem_posneg}, we can choose $\lambda < 0 < \mu$ and $\lambda'\leq \mu'$ such that%
\begin{equation*}
  \max\Lambda_{\Mo}^-\left(\MC_{\Theta}(w)\right) < \lambda < \lambda' < \min\Lambda_{\Mo}^0\left(\MC_{\Theta}(w)\right) \leq \max\Lambda_{\Mo}^0\left(\MC_{\Theta}(w)\right) < \mu' < \mu < \min\Lambda_{\Mo}^+\left(\MC_{\Theta}(w)\right).%
\end{equation*}
By Theorem \ref{central}(iii), there exists an integer $t_0>0$ such that for all $(u,x) \in \MC_{\Theta}(w)$ and $t \geq t_0$ it holds that%
\begin{align*}
  \left\|(\rmd\varphi_{t,u})_{|\EC^-_{\Theta,w}(u,x)}\right\| < \rme^{\lambda t} &\mbox{\quad and\quad} m\left((\rmd\varphi_{t,u})_{|\EC^+_{\Theta,w}(u,x)}\right) > \rme^{\mu t},\\
  \left\|(\rmd\varphi_{t,u})_{|\EC^0_{\Theta,w}(u,x)}\right\| < \rme^{\mu't} &\mbox{\quad and\quad} m\left((\rmd\varphi_{t,u})_{|\EC^0_{\Theta,w}(u,x)}\right) > \rme^{\lambda't}.%
\end{align*}
By considering%
\begin{align*}
  c_1 := \max\left\{1,\max_{\stackrel{0\leq t\leq t_0}{(u,x)\in\MC_{\Theta}(w)}}\left\|(\rmd\varphi_{t, u})_{|\EC^-_{\Theta,w}(u,x)}\right\|\rme^{-\lambda t}, \max_{\stackrel{0\leq t\leq t_0}{(u,x)\in\MC_{\Theta}(w)}}\left\|(\rmd\varphi_{t,u})_{|\EC^0_{\Theta,w}(u,x)}\right\|\rme^{-\mu' t}\right\},\\
	c_2 := \min\left\{1,\min_{\stackrel{0\leq t\leq t_0}{(u,x)\in\MC_{\Theta}(w)}}m\left((\rmd\varphi_{t,u})_{|\EC^+_{\Theta,w}(u,x)}\right)\rme^{-\mu t}, \min_{\stackrel{0\leq t\leq t_0}{(u,x)\in\MC_{\Theta}(w)}}m\left((\rmd\varphi_{t, u})_{|\EC^0_{\Theta,w}(u,x)}\right)\rme^{-\lambda't}\right\}%
\end{align*}
and $c := \max\{c_1,c_2^{-1}\}$, for all $(u,x)\in \MC_{\Theta}(w)$ and $t\geq 0$ we have%
\begin{align*}
  \left\|(\rmd\varphi_{t,u})_{|\EC^-_{\Theta,w}(u,x)}\right\| &\leq c\rme^{\lambda t} \mbox{\quad and\quad} m\left((\rmd\varphi_{t,u})_{|\EC^+_{\Theta,w}(u,x)}\right) \geq c^{-1}\rme^{\mu t},\\
	\left\|(\rmd\varphi_{t,u})_{|\EC^0_{\Theta,w}(u,x)}\right\| &\leq c\rme^{\mu't} \mbox{\quad and\quad} m\left((\rmd\varphi_{t,u})_{|\EC^0_{\Theta,w}(u,x)}\right) \geq c^{-1}\rme^{\lambda't},%
\end{align*}
implying that $E_{\Theta}(w)$ is partially hyperbolic.%
\end{proof}
	
\begin{remark}
Since $\Lambda_{\Mo}^0(\MC_{\Theta}(w))$ is symmetric, partial hyperbolicity is also equivalent to%
\begin{equation*}
	\max\Lambda_{\Mo}^0\left(\MC_{\Theta}(w)\right) \leq \min\left\{\min\Lambda_{\Mo}^+\left(\MC_{\Theta}(w)\right),-\max\Lambda_{\Mo}^-\left(\MC_{\Theta}(w)\right) \right\}.%
\end{equation*}
\end{remark}
	
\begin{remark}
The uniformly hyperbolic case (without center bundle) has been characterized in \cite{DS2} by the condition $\langle \Theta(\phi)\rangle \subset w\langle \Theta \rangle$. In this case, $\Pi^0_{\phi,\Theta,w} = \emptyset$, and consequently $\Lambda^0_{\Mo}(\MC_{\Theta}(w)) = \emptyset$.%
\end{remark}

\section{Examples}\label{sec_ex}

\subsection{Right-invariant systems on the flag manifolds of $G = \Sl(3,\R)$}

Let $G = \Sl(3,\R)$ and $\fg = \sl(3,\R)$ with the following canonical choices:%
\begin{itemize}
\item $\fa=\{\diag(a_1,a_2,a_3): \;\;a_i\in\R,\ a_1+a_2+a_3=0\}$;%
\item $\Pi=\{\alpha_{i,j}: i,j\in\{1,2,3\} \;\mbox{ with }\; i\neq j\}$, where $\fa \ni H \mapsto \alpha_{i,j}(H) := a_i-a_j\in\R$;%
\item $\Pi^+=\{\alpha_{12}, \alpha_{13}, \alpha_{23}\}$ and $\Pi^-=-\Pi^+$;%
\item $\WC$ is the permutation group $S_3$ which acts on the matrices in $\fa$ by permuting the entries on the diagonal.%
\item $\F=\{V_1\subset V_2\subset\R^3\}$ where $V_i$ are vector subspaces of $\R^3$ with $\dim V_i=i$, $i=1,2$;%
\item $\F_{\{\alpha_{23}\}} = \mathbb{RP}^2$ the two-dimensional real projective space;%
\item $\F_{\{\alpha_{12}\}} = \mathrm{Gr}_2(\R^3)$ the Grassmannian of the two-dimensional vector subspaces of $\R^3$.
\end{itemize}

\begin{figure}[hb]
\begin{center}
\includegraphics[scale=1.50]{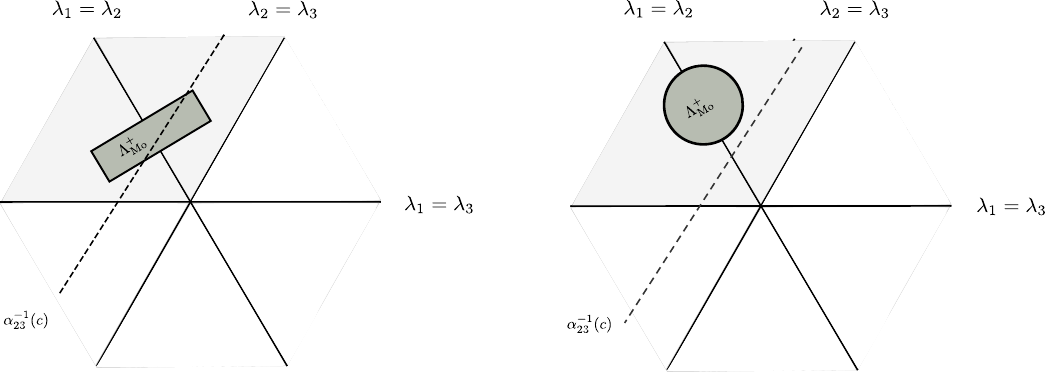}
\end{center}
\caption{Morse spectra}
\label{fig1}
\end{figure}

Let $\Sigma$ be an invariant system on the maximal flag manifold $\F$. We have four possibilities for $H_{\phi}$, namely:%

\subsubsection*{$\bullet \;\;H_{\phi}=\diag(a_1>a_2>a_3)$}

In this case, $\Theta(\phi)=\emptyset$ and by \cite[Thm.~4.6]{DS2} any chain control set $E_{\Theta}(w)$ of the induced system $\Sigma_{\Theta}$ on $\F_{\Theta}$ is uniformly hyperbolic without center bundle for any choice of $\Theta$, i.e., $\EC^0_{\Theta,w}$ is trivial.%
	
\subsubsection*{$\bullet\;\;H_{\phi}=\mathrm{diag}(a_1=a_2=a_3)=0$}%

In this case, $\Theta(\phi) = \Sigma$. By \cite[Prop.~8.7]{SMLS} the control flow on $\UC\times\F_{\Theta}$ is chain transitive, implying that the system $\Sigma_{\Theta}$ is also chain transitive on $\F_{\Theta}$. In particular, $\EC^{\pm}_{\Theta,w}$ are trivial, hence no chain control set is partially hyperbolic.%

\subsubsection*{$\bullet\;\;H_{\phi}=\mathrm{diag}(a_1=a_2>a_3)$}

In this case, $\Theta(\phi)=\{\alpha_{12}\}$ and a simple calculation yields:%

{\bf The induced system on $\mathbf{\F}$:} It admits three Morse sets%
\begin{equation*}
  \MC(w_1) = \MC(w_{12}), \;\;\MC(w_{23}) = \MC(w_{123}) \;\;\mbox{ and }\;\;\MC(w_{13})=\MC(w_{132}),%
\end{equation*}
where the subscript in the element of $\WC$ corresponds to the permutations in the diagonal of the elements in $\fa$ and%
\begin{equation*}
	\begin{array}{lllll}
	\Pi^+_{\phi,w_1} = \emptyset, & \hspace{1cm} &\Pi^0_{\phi, w_1}=\{-\alpha_{12}\} & \hspace{1cm} & \Pi^-_{\phi, w_1}=\{-\alpha_{13}, -\alpha_{23}\}\\
	\Pi^+_{\phi,w_{23}} = \{\alpha_{23}\} &\hspace{1cm} & \Pi^0_{\phi, w_{23}}=\{-\alpha_{12}\} & \hspace{1cm} & \Pi^-_{\phi, w_{23}}=\{-\alpha_{13}\}\\
	\Pi^+_{\phi,w_{13}} = \{\alpha_{13}, \alpha_{23}\} & \hspace{1cm}&\Pi^0_{\phi, w_{13}}=\{\alpha_{12}\} & \hspace{1cm} & \Pi^-_{\phi, w_{13}}=\emptyset
	\end{array}
\end{equation*}
implying%
\begin{align*}
	\Lambda_{\Mo}^+\left(\MC_{\{\alpha_{23}\}}(w_1)\right)&=\emptyset \allowdisplaybreaks\\
	\Lambda_{\Mo}^0\left(\MC_{\{\alpha_{23}\}}(w_1)\right)&=\alpha_{12}\left(\Lambda_{\Mo}^+\right) \allowdisplaybreaks\\
	\Lambda_{\Mo}^-\left(\MC_{\{\alpha_{23}\}}(w_1)\right)&=-\left(\alpha_{13}\left(\Lambda_{\Mo}^+\right)\cup\alpha_{23}\left(\Lambda_{\Mo}^+\right)\right) \allowdisplaybreaks\\
	\Lambda_{\Mo}^+\left(\MC_{\{\alpha_{23}\}}(w_{23})\right)&=\alpha_{23}\left(\Lambda_{\Mo}^+\right) \allowdisplaybreaks\\
	\Lambda_{\Mo}^0\left(\MC_{\{\alpha_{23}\}}(w_{23})\right)&=\alpha_{12}\left(\Lambda_{\Mo}^+\right) \allowdisplaybreaks\\
	\Lambda_{\Mo}^-\left(\MC_{\{\alpha_{23}\}}(w_{23})\right)&=-\alpha_{13}\left(\Lambda_{\Mo}^+\right) \allowdisplaybreaks\\
	\Lambda_{\Mo}^-\left(\MC_{\{\alpha_{23}\}}(w_{13})\right)&=\emptyset \allowdisplaybreaks\\
	\Lambda_{\Mo}^0\left(\MC_{\{\alpha_{23}\}}(w_{13})\right)&=\alpha_{12}\left(\Lambda_{\Mo}^+\right) \allowdisplaybreaks\\
	\Lambda_{\Mo}^+\left(\MC_{\{\alpha_{23}\}}(w_{13})\right)&=\alpha_{13}\left(\Lambda_{\Mo}^+\right)\cup\alpha_{23}\left(\Lambda_{\Mo}^+\right)%
\end{align*}

By Theorem \ref{Teo1}, the chain control sets $E(w_1)$, $E(w_{13})$ and $E(w_{23})$ are partially hyperbolic iff%
\begin{equation*}
  \min\left\{\min_{\lambda\in \Lambda_{\Mo}^+}\alpha_{23}(\lambda), \min_{\lambda\in \Lambda_{\Mo}^+}\alpha_{13}(\lambda)\right\}>\max_{\lambda\in \Lambda_{\Mo}^+}\alpha_{12}(\lambda).%
\end{equation*}
	
Since $\Lambda_{\Mo}^+$ is convex and invariant by $w_{12}$ and $w_{12}(\ker\alpha_{23})=\ker\alpha_{13}$, we have $\min_{\lambda\in \Lambda_{\Mo}^+}\alpha_{13}(\lambda)=\min_{\lambda\in \Lambda_{\Mo}^+}\alpha_{23}(\lambda)$ (see Fig.~\ref{fig1}), and consequently $E(w_1),E(w_{13})$ and $E(w_{23})$ are partially hyperbolic iff%
\begin{equation*}
  \min_{\lambda\in \Lambda_{\Mo}^+}\alpha_{23}(\lambda)>\max_{\lambda\in \Lambda_{\Mo}^+}\alpha_{12}(\lambda)%
\end{equation*}
iff $\Lambda_{\Mo}^+$ is contained in the upper half space determined by the line $\alpha_{23}^{-1}(c)$, where $c=\max_{\lambda\in \Lambda_{\Mo}^+}\alpha_{12}(\lambda)$ (see Fig.~\ref{fig1}).%
	
{\bf The induced system on $\mathbf{\F_{\alpha_{23}}=\mathbb{RP}^2}$:} For this setup, we have the Morse components%
\begin{align*}
  \MC_{\{\alpha_{23}\}}(w_1)&=\MC_{\{\alpha_{23}\}}(w_{12})=\MC_{\{\alpha_{23}\}}(w_{23})=\MC_{\{\alpha_{23}\}}(w_{123})\\ 
	&\mbox{ and }
	\MC_{\{\alpha_{23}\}}(w_{13})=\MC_{\{\alpha_{23}\}}(w_{132})%
\end{align*}
which yields%
\begin{equation*}
	\begin{array}{lllll}
	\Pi^+_{\phi, \{\alpha_{23}\}, w_1}=\emptyset, & \hspace{1cm} &\Pi^0_{\phi, \{\alpha_{23}\}, w_1}=\{-\alpha_{12}\} & \hspace{1cm} & \Pi^-_{\phi, \{\alpha_{23}\}, w_1}=\{-\alpha_{13}\}\\
	\Pi^+_{\phi, \{\alpha_{23}\}, w_{13}}=\{\alpha_{13}, \alpha_{23}\} &\hspace{1cm} & \Pi^0_{\phi, \{\alpha_{23}\}, w_{13}}=\emptyset & \hspace{1cm} & \Pi^-_{\phi, \{\alpha_{23}\}, w_{13}}=\emptyset
	\end{array}
\end{equation*}
and%
\begin{align*}
	\Lambda_{\Mo}^+\left(\MC_{\{\alpha_{23}\}}(w_1)\right) &=\emptyset\\
	\Lambda_{\Mo}^0\left(\MC_{\{\alpha_{23}\}}(w_1)\right) &= \alpha_{12}\left(\Lambda_{\Mo}^+\right)\\
	\Lambda_{\Mo}^-\left(\MC_{\{\alpha_{23}\}}(w_1)\right) &= -\alpha_{13}\left(\Lambda_{\Mo}^+\right)\\
	\Lambda_{\Mo}^+\left(\MC_{\{\alpha_{23}\}}(w_{13})\right) &= \alpha_{13}\left(\Lambda_{\Mo}^+\right)\cup\alpha_{23}\left(\Lambda_{\Mo}^+\right)\\
	\Lambda_{\Mo}^0\left(\MC_{\{\alpha_{23}\}}(w_{13})\right) &= \emptyset\\
	\Lambda_{\Mo}^-\left(\MC_{\{\alpha_{23}\}}(w_{13})\right) &= \emptyset&
\end{align*}
By \cite[Thm.~4.6]{DS2}, $E_{\{\alpha_{23}\}}(w_{13})$ is uniformly hyperbolic without center bundle and by Theorem \ref{Teo1} above $E_{\{\alpha_{23}\}}(w_{1})$ is partially hyperbolic iff $\min_{\lambda\in \Lambda_{\Mo}^+}\alpha_{13}(\lambda)>\max_{\lambda\in \Lambda_{\Mo}^+}\alpha_{12}(\lambda)$.%
		
{\bf The induced system on $\mathbf{\F_{\alpha_{12}}=\mathrm{Gr}_2(\R^3)}$:} In this case, we have the Morse components%
\begin{equation*}
  \MC_{\{\alpha_{12}\}}(w_1)=\MC_{\{\alpha_{12}\}}(w_{12}) \hspace{1cm}\mbox{ and }$$ $$\MC_{\{\alpha_{12}\}}(w_{13})=\MC_{\{\alpha_{12}\}}(w_{23})=\MC_{\{\alpha_{12}\}}(w_{132})=\MC_{\{\alpha_{12}\}}(w_{123})%
\end{equation*}
which yields%
\begin{equation*}
	\begin{array}{lllll}
	\Pi^+_{\phi, \{\alpha_{12}\}, w_1}=\emptyset, & \hspace{1cm} &\Pi^0_{\phi, \{\alpha_{12}\}, w_1}=\emptyset & \hspace{1cm} & \Pi^-_{\phi, \{\alpha_{12}\}, w_1}=\{-\alpha_{13}, -\alpha_{23}\}\\
	\Pi^+_{\phi, \{\alpha_{12}\}, w_{13}}=\{\alpha_{13}\} &\hspace{1cm} & \Pi^0_{\phi, \{\alpha_{12}\}, w_{13}}=\{\alpha_{12}\} & \hspace{1cm} & \Pi^-_{\phi, \{\alpha_{12}\}, w_{13}}=\emptyset\\
	\end{array}
\end{equation*}
and%
\begin{align*}
	\Lambda_{\Mo}^+\left(\MC_{\{\alpha_{12}\}}(w_1)\right) &= \emptyset\\
	\Lambda_{\Mo}^0\left(\MC_{\{\alpha_{12}\}}(w_1)\right) &= \emptyset\\
	\Lambda_{\Mo}^-\left(\MC_{\{\alpha_{12}\}}(w_1)\right) &= -\left(\alpha_{13}\left(\Lambda_{\Mo}^+\right)\cup\alpha_{23}\left(\Lambda_{\Mo}^+\right)\right)\\
	\Lambda_{\Mo}^+\left(\MC_{\{\alpha_{12}\}}(w_{13})\right) &= \alpha_{13}\left(\Lambda_{\Mo}^+\right)\\
	\Lambda_{\Mo}^0\left(\MC_{\{\alpha_{12}\}}(w_{13})\right) &= \alpha_{12}\left(\Lambda_{\Mo}^+\right)\\
	\Lambda_{\Mo}^-\left(\MC_{\{\alpha_{22}\}}(w_{12})\right) &= \emptyset%
\end{align*}
Therefore, $E_{\{\alpha_{12}\}}(w_1)$ is uniformly hyperbolic without center bundle and $E_{\{\alpha_{12}\}}(w_{13})$ is partially hyperbolic iff $\min_{\lambda\in \Lambda_{\Mo}^+}\alpha_{23}(\lambda)>\max_{\lambda\in \Lambda_{\Mo}^+}\alpha_{12}(\lambda)$.%
	
{\bf Summarizing:} For any induced invariant system on $\F_{\Theta}$ such that $\Theta(\phi)=\{\alpha_{12}\}$, any chain control set is partially hyperbolic iff $\Lambda_{\Mo}^+$ is in the upper half space determined by the line $\alpha_{23}^{-1}(c)$, where $c=\max_{\lambda\in\Lambda_{\Mo}^+}\alpha_{12}(\lambda)$.%

\subsubsection*{$\bullet\;\;H_{\phi} = \diag(a_1>a_2=a_3)$:} This case is analogous to the preceding one. An analogous analysis shows that for any induced invariant system on $\F_{\Theta}$ such that $\Theta(\phi) = \{\alpha_{23}\}$, any chain control set is partially hyperbolic iff $\Lambda^+_{\Mo}$ is contained in the upper half space determined by the line $\alpha_{12}^{-1}(c)$, where $c = \max_{\lambda\in\Lambda^+_{\Mo}}\alpha_{23}(\lambda)$.%

\subsection{An example on the 2-torus}

Let us consider $X,Y\in\sl(2)$ with $\det[X,Y]\neq 0$. It is not hard to see that this implies $\sl(2) = \mathrm{span}\{X,Y,[X,Y]\}$. For any $\rho>0$ let%
\begin{equation*}
  \UC_{\rho} := \{u\in L^{\infty}(\R, \R)\ :\ u(t) \in [-\rho,\rho] \;\mbox{ a.e.}\}%
\end{equation*}
and consider the bilinear system on $\R^2$ given by%
\begin{equation}\label{bilinear}
  \dot{x}(t) = (X + u(t)Y)x(t),\quad u\in\UC^{\rho}.%
\end{equation}
Such systems factor naturally to the projective line $\mathbb{P}$ and we denote these induced systems by $\Sigma_{\mathbb{P}}^{\rho}$.
The condition that $X$ and $Y$ generate $\sl(2)$ together with the compactness of $\mathbb{P}$ implies, in particular, that for $\rho>0$ small enough, $(u,x) \in \UC_{\rho} \tm \mathbb{P}$ is an inner pair, i.e., there is $\tau>0$ with $\varphi^{\rho}_{\tau,u}(x)\in\inner\OC^{+,\rho}(x)$ (see \cite[Prop.~4.5.19]{CKl}), where $\OC^{+,\rho}(x)$ is the positive orbit of $x$ using controls in $\UC_{\rho}$. By \cite[Thm.~6.1.3(iv)]{CKl}, the map $\rho\mapsto \Lambda_{\Mo}(E^{\rho})$ is continuous (in particular) at $\rho=0$, where $\Lambda_{\Mo}(E^{\rho})$ is the Morse spectrum of the bilinear system \eqref{bilinear} over the chain control set $E^{\rho}$ of $\Sigma_{\mathbb{P}}^{\rho}$.%

The system $\Sigma_{\mathbb{P}}^{\rho}$ coincides with the invariant system on the maximal flag manifold $\mathbb{P}$ of $\sl(2)$ induced by%
\begin{equation*}
  \dot{g}(t) = X(g(t)) + u(t)Y(g(t)),%
\end{equation*}
where $u\in\UC_{\rho}$. If we denote by $\varphi^{\mathbb{P},\rho}$ and $\varphi^{\rho}$ the transition maps of the systems on $\mathbb{P}$ and on $\R^2$, respectively, a simple calculation shows that%
\begin{equation}\label{eq}
  \log |(\rmd\varphi^{\mathbb{P}}_{t,u})_xZ(x)| = D - 2\log|\varphi_{t,u}(ke_1)|,%
\end{equation}
where $x = k\cdot b_0$, $k\in \SO(2)$, $Z\in\sl(2)$ and the number $D$ only depends on $k$ and $Z$. Therefore, the Lyapunov exponents of the system $\Sigma_{\mathbb{P}}^{\rho}$ can be recovered from the Lyapunov exponents of the bilinear system on $\R^2$ and vice-versa.%

On the other hand, by \cite[Thm.~5.2]{SMVA}, the induced system $\Sigma_{\mathbb{P}}^{\rho}$ satisfies:%
\begin{itemize}
\item[1.] If $\det X\geq 0$, then $\Sigma_{\mathbb{P}}^{\rho}$ is controllable for any $\rho>0$;%
\item[2.] If $\det X<0$ and $\det [X,Y]<0$, then $\Sigma_{\mathbb{P}}^{\rho}$ is controllable iff there is $u_0\in\Omega_{\rho}$ such that $X+u_0Y$ has only purely imaginary eigenvalues;%
\item[3.] If $\det X<0$ and $\det [X,Y]>0$, then $\Sigma_{\mathbb{P}}^{\rho}$ is uncontrollable for any $\rho>0$.%
\end{itemize}
Moreover, in the case when $X+uY$ has a pair of nonzero real eigenvalues for any $u\in\Omega_{\rho}$, the system $\Sigma_{\mathbb{P}}$ admits two control sets.%

Let us then consider%
\begin{equation*}
  X_1 = \left(\begin{array}{cc}
1 & 0 \\ 0 & -1
\end{array}\right), \;\;X_2 = \left(\begin{array}{cc}
0 & 1 \\ 0 & 0
\end{array}\right) \;\;\mbox{ and } Y_i \in \sl(2) \mbox{ satisfying } \det[X_i,Y_i]> 0,\ i = 1,2.%
\end{equation*}
By the above, for $\rho>0$ small enough, the control-affine systems $\Sigma_{\mathbb{P}}^{\rho,1}$ and $\Sigma_{\mathbb{P}}^{\rho,2}$ induced on $\mathbb{P}$, respectively, by the bilinear systems on $\R^2$ given by%
\begin{equation*}
  \dot{x}(t) = (X_1+u_1(t) Y_1)x(t)\;\mbox{ and }\; \dot{x}(t) = (X_2+u_2(t) Y_2)x(t) \;\;\mbox{ with }\;\;u_1,u_2\in \UC_{\rho}%
\end{equation*}
satisfy:%
\begin{itemize}
\item[1.] $\Sigma_{\mathbb{P}}^{\rho,1}$ admits two disjoint control sets whose closures are chain control sets;
\item[2.] $\Sigma_{\mathbb{P}}^{\rho,2}$ is controllable.
\end{itemize}
Moreover, since $X_2$ is nilpotent, the continuity of the spectrum of the bilinear system together with equation \eqref{eq} implies that for any $\ep>0$ there is $\rho>0$ small enough such that the Lyapunov exponents of $\Sigma_{\mathbb{P}}^{\rho,2}$ are contained in $(-\ep,\ep)$. On the other hand, if we denote by $E^{\rho,+}$ and $E^{\rho,-}$ the chain control sets of $\Sigma_{\mathbb{P}}^{\rho,1}$, by continuity, the Lyapunov exponents of $\Sigma_{\mathbb{P}}^{\rho,1}$ on $E^{\rho,+}$ are contained in $(-2-\ep,-2+\ep)$ and on $E^{\rho,-}$ in $(2-\ep,2+\ep)$, respectively.%

Let us now consider the semisimple Lie group $G = \Sl(2) \tm \Sl(2)$ with Lie algebra $\fg = \sl(2) \tm \sl(2)$. If $X=(X_1,X_2)$, $Y=(Y_1,0)$, $Z=(0,Y_2)\in\fg$, the right-invariant system on $G$ given by%
\begin{equation*}
  \dot{g}(t) = X(g(t)) + u_1(t)Y(g(t)) + u_2(t)Z(g(t)), \quad u_1,u_2\in \UC_{\rho}%
\end{equation*}
factors to the system $\Sigma^{\rho}_{\F}$ on the maximal flag manifold $\F = \mathbb{P} \tm \mathbb{P}$ and satisfies $\Sigma^{\rho}_{\F} = \Sigma_{\mathbb{P}}^{\rho,1} \tm \Sigma_{\mathbb{P}}^{\rho,2}$.%

Therefore, $\Sigma^{\rho}_{\F}$ admits two chain control sets $E^{\rho}(1) = E^{\rho,+} \tm \mathbb{P}$ and $E(w_0) = E^{\rho,-} \tm \mathbb{P}$ (see Fig.~\ref{fig2}). In particular, the flag type of the control flow $\phi^{\rho}$ of $\Sigma_{\F}^{\rho}$ is given by $\Theta(\phi^{\rho}) = \{(0,\alpha)\}$\footnote{Here, $(0,\alpha)$ stands for the linear functional given by the composition of the root $\alpha$ of $\sl(2)$ with the projection in the second coordinate.}, implying%
\begin{equation*}\begin{array}{lll}
  \Pi^+_{\phi^{\rho}, 1}=\emptyset, &\Pi^-_{\phi^{\rho}, 1}=\{(-\alpha, 0)\} & \Pi^0_{\phi^{\rho}, 1}=\{(0, -\alpha)\}\\
  \Pi^+_{\phi^{\rho}, w_0}=\{(-\alpha, 0)\}, & \Pi^-_{\phi^{\rho}, w_0}=\emptyset & \Pi^0_{\phi^{\rho}, w_0}=\{(0, \alpha)\}
\end{array}
\end{equation*}
and consequently that $E^{\rho}(1)$ and $E^{\rho}(2)$ are not uniformly hyperbolic.%

On the other hand, by considering%
\begin{equation*}
  \MC^{\rho}(1) = \EC^{\rho}(1)\;\;\;\mbox{ and }\;\;\; \MC(w_0) = \EC^{\rho}(w_0),%
\end{equation*}
the fact that $\varphi^{\mathbb{P},\rho}_{t,u} = \varphi^{\mathbb{P},\rho,1}_{t,u_1} \tm \varphi_{t,u_2}^{\mathbb{P},\rho,2}$ implies%
\begin{equation*}
  \EC^-_1(u,x) = T_{x_1}\mathbb{P}\;\;\mbox{ and }\; \EC^0_1(u,x) = T_{x_2}\mathbb{P} \;\;\mbox{ for any }\left((x_1 x_2), (u_1,u_2)\right)\in\MC^{\rho}(1)%
\end{equation*}
and%
\begin{equation*}
  \EC^+_{w_0}(u,x) = T_{x_1}\mathbb{P}\;\;\mbox{ and }\; \EC^0_{w_0}(u,x) = T_{x_2}\mathbb{P} \;\;\mbox{ for any }\left((x_1,x_2), (u_1,u_2)\right)\in\MC^{\rho}(w_0).%
\end{equation*}
Consequently,%
\begin{align*}
 & \Lambda^-_{\Mo}(\MC^{\rho}(1)) \subset (-2-\ep,-2+\ep)\;\;\mbox{ and }\;\;\Lambda^0_{\Mo}(\MC^{\rho}(1))\subset (-\ep,\ep),\\
 & \Lambda^+_{\Mo}(\MC^{\rho}(w_0))\subset (-2-\ep,-2+\ep)\;\;\mbox{ and }\;\;\Lambda^0_{\Mo}(\MC^{\rho}(w_0))\subset (-\ep, \ep),%
\end{align*}
which by Theorem \ref{Teo1} implies that $E^{\rho}(1)$ and $E^{\rho}(w_0)$ are partially hyperbolic for small values of $\rho>0$.%

\begin{figure}[!ht]
	\begin{center}
		\includegraphics[scale=0.8,angle=270]{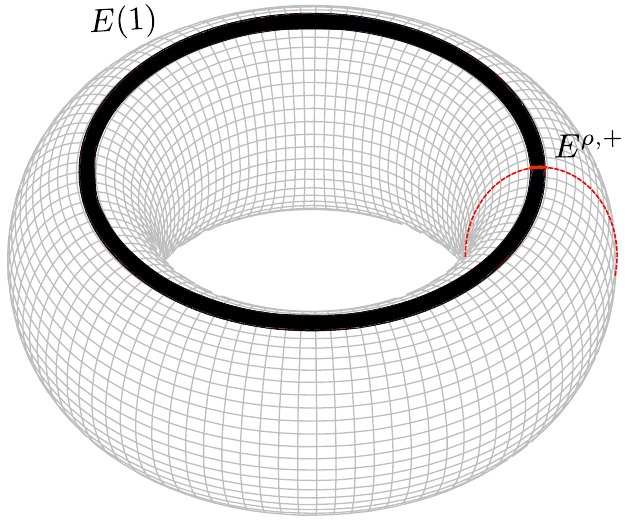}
	\end{center}
	\caption{Attractor chain control set}
	\label{fig2}
\end{figure}

\section{Invariance entropy}\label{sec_ie}

In this section, we present an application of our characterization of the Lyapunov spectra for the systems on $\F_{\Theta}$. First, we recall the concept of invariance entropy. Consider the control-affine system \eqref{eq_cas} and let $Q$ be an all-time controlled invariant set. For a compact set $K \subset Q$, the invariance entropy $h_{\inv}(K,Q)$ is defined as follows. For $\tau>0$, a set $\SC \subset \UC$ is called $(\tau,K,Q)$-spanning if for every $x\in K$ there is $u\in\SC$ with $\varphi([0,\tau],x,u) \subset Q$. Writing $r_{\inv}(\tau,K,Q)$ for the minimal cardinality of such a set, we put%
\begin{equation*}
  h_{\inv}(K,Q) := \limsup_{\tau \rightarrow +\infty}\frac{1}{\tau}\log r_{\inv}(\tau,K,Q) \in [0,\infty].%
\end{equation*}
There is an information-theoretic meaning of this quantity in the context of networked control, related to the practical stabilization of systems over digital communication channels. We refer the reader to \cite{Kaw} for more details.%

In \cite{DS3}, we have derived a lower bound on $h_{\inv}(K,Q)$ for a certain class of partially hyperbolic sets $Q$. To explain this result, we need to introduce some further notation: First, for every $u\in\UC$ we define the $u$-fiber of $Q$ by%
\begin{equation*}
  Q(u) := \left\{ x\in M\ :\ \varphi(\R,x,u) \subset Q \right\}.%
\end{equation*}
We also need to view the control flow $\phi_t:\UC \tm M \rightarrow \UC \tm M$ as a random dynamical system by discretizing the flow in time (with step size $t=1$) and equipping the base space $\UC$ with a $\theta_1$-invariant Borel probability measure $P$. Then, for every $\phi_1$-invariant probability measure $\mu$ on $\UC \tm M$ projecting to $P$, we can speak of the metric entropy $h_{\mu}(\varphi)$ of the RDS. If we start with an invariant measure $\mu$ on $\UC \tm M$, then the RDS is implicitly given by the projected measure $P := (\pi_{\UC})_*\mu$ and $h_{\mu}(\varphi)$ is defined by%
\begin{equation*}
  h_{\mu}(\varphi) := \sup_{\AC}\lim_{n\rightarrow+\infty}\frac{1}{n}\int_{\UC} H_{\mu_u}\Bigl(\bigvee_{i=0}^{n-1}\varphi_{i,u}^{-1}\AC\Bigr) \rmd P(u),%
\end{equation*}
where the supremum is taken over all finite measurable partitions of $M$ and $\{\mu_u\}_{u\in\UC}$ is the $P$-almost everywhere defined family of sample measures on $M$ so that $\rmd \mu(u,x) = \rmd \mu_u(x) \rmd P(u)$.%

Finally, for a partially hyperbolic all-time controlled invariant set $Q$, we introduce the unstable determinant%
\begin{equation*}
  J^+\varphi_{t,u}(x) := \left|\det (\rmd\varphi_{t,u})_{|E^+(u,x)}:E^+(u,x) \rightarrow E^+(\phi_t(u,x))\right|,\quad (u,x) \in \QC.%
\end{equation*}
The main result of \cite{DS3} then reads as follows.%

\begin{theorem}\label{thm_ielb}
Consider the control-affine system \eqref{eq_cas}. Let $Q \subset M$ be a compact all-time controlled invariant set with lift $\QC$, satisfying the following assumptions:%
\begin{enumerate}
\item[(A)] There exists a continuous invariant decomposition%
\begin{equation*}
  T_xM = E^{0-}(u,x) \oplus E^+(u,x),\quad \forall (u,x) \in \QC%
\end{equation*}
into subspaces $E^{0-}(u,x)$ and $E^+(u,x)$ of constant dimensions such that the following holds: There exists a constant $\lambda>0$ so that for every $\ep>0$ there is $T>0$ with%
\begin{align*}
  |(\rmd\varphi_{t,u})_xv| &\geq \rme^{\lambda t}|v| \mbox{\quad if\ } v \in E^+(u,x)\\
	|(\rmd\varphi_{t,u})_xv| &\leq \rme^{\ep t}|v| \mbox{\quad if\ } v \in E^{0-}(u,x)%
\end{align*}
for all $(u,x) \in \QC$ and $t \geq T$.%
\item[(B)] The set-valued map $u \mapsto Q(u)$ from $\UC$ into the compact subsets of $Q$ is lower semicontinuous.%
\item[(C)] The set $Q$ is isolated in the sense that there exists a neighborhood $N$ of $Q$ such that $\varphi(\R,x,u) \subset N$ implies $(u,x) \in \QC$ for any $(u,x) \in \UC \tm M$.%
\end{enumerate}
Then for all compact sets $K \subset Q$ of positive volume the invariance entropy satisfies%
\begin{equation*}
  h_{\inv}(K,Q) \geq \inf_{\mu \in M_{\phi_1}(\QC)}\left(\int_{\QC}\log J^+\varphi_{1,u}(x)\rmd\mu(u,x) - h_{\mu}(\varphi)\right),%
\end{equation*}
the infimum taken over all invariant measures $\mu$ of $\phi_1$ supported on $\QC$.%
\end{theorem}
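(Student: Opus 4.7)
The plan is to estimate $r_{\inv}(\tau,K,Q)$ from below by controlling the unstable volume of the ``stay sets'' along each admissible control, and then to upgrade this combinatorial estimate to the measure-theoretic bound by a Katok-style argument producing invariant measures from empirical distributions supported on spanning sets.

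The first step is a volume-comparison estimate. For any minimal spanning set $\SC_{\tau} \subset \UC$ of cardinality $r_{\inv}(\tau,K,Q)$, the sets
\begin{equation*}
  B_{\tau}(u) := \{x \in K : \varphi([0,\tau],x,u) \subset Q\},\quad u \in \SC_{\tau},
\end{equation*}
cover $K$. By the partial hyperbolic splitting in (A), each $B_{\tau}(u)$ is foliated by local unstable leaves. Since $\varphi_{\tau,u}(B_{\tau}(u))\subset Q$, and the isolation assumption (C) forces $Q$ to have bounded unstable diameter, the change-of-variables formula along $E^+$ yields an unstable-volume estimate
\begin{equation*}
  \mathrm{vol}^+\bigl(B_{\tau}(u)\bigr) \leq C \sup_{x\in B_{\tau}(u)} J^+\varphi_{\tau,u}(x)^{-1}.
\end{equation*}
Because $K$ has positive Lebesgue measure, its total unstable volume is bounded below by some $c>0$. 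Summing the above inequality over $u \in \SC_{\tau}$ produces
\begin{equation*}
  r_{\inv}(\tau,K,Q) \geq (c/C)\,\inf_{u \in \SC_{\tau}}\,\inf_{x \in B_{\tau}(u)}\,J^+\varphi_{\tau,u}(x).
\end{equation*}

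The second step converts the pointwise infimum into a statement involving an invariant measure. One forms, along spanning orbits, the empirical measures
\begin{equation*}
  \mu_{\tau} := \frac{1}{\tau\, r_{\inv}(\tau,K,Q)}\sum_{u \in \SC_{\tau}} \int_{0}^{\tau}\delta_{\phi_{s}(u,x_{u})}\,\rmd s,
\end{equation*}
with arbitrary $x_{u} \in B_{\tau}(u)$, and passes (using assumption (B) to keep the support inside $\QC$) to a weak${}^{*}$ limit $\mu^{*} \in M_{\phi_{1}}(\QC)$. By Birkhoff's theorem applied to $\log J^{+}\varphi_{1,u}(x)$ along $\mu^{*}$-typical orbits, the lower bound above grows asymptotically like $\exp(\tau\int \log J^{+}\varphi_{1,u}(x)\,\rmd\mu^{*})$. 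A Katok-style refinement — using a generating partition and the fiberwise Shannon--McMillan--Breiman theorem for $\varphi$ viewed as an RDS over $P := (\pi_{\UC})_{*}\mu^{*}$ — shows that each stay set $B_{\tau}(u)$ absorbs at most $\exp(\tau\, h_{\mu^{*}}(\varphi))$ dynamically indistinguishable orbits, so the effective number of required spanning controls is reduced by this entropy factor. Combining and taking $\tau\to\infty$ gives
\begin{equation*}
  h_{\inv}(K,Q) \geq \int_{\QC}\log J^{+}\varphi_{1,u}(x)\,\rmd\mu^{*}(u,x) - h_{\mu^{*}}(\varphi),
\end{equation*}
and since this bound holds for every invariant measure obtained as a limit of $\mu_{\tau}$, taking the infimum over $M_{\phi_{1}}(\QC)$ delivers the conclusion.

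The main obstacle is the Katok step. Its implementation relies crucially on (B), ensuring that the empirical measures $\mu_{\tau}$ admit weak${}^{*}$ limits genuinely supported on $\QC$ rather than on some larger set outside the region where $J^{+}$ is well defined; and on (C), which forces two orbits that stay close at every time to be fiberwise identifiable, so that dynamical separation in the sense of $h_{\mu^{*}}(\varphi)$ truly controls the redundancy within each $B_{\tau}(u)$. The most delicate technical point is obtaining the Brin--Katok ball-measure estimates for the skew-product over the non-compact base $\UC$ while simultaneously accommodating the possibly neutral or weakly contracting directions in $E^{0-}$, which are invisible to the classical uniformly hyperbolic volume lemma and require the subexponential bound in (A) to enter the argument quantitatively.
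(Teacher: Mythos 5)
The statement you are asked to prove is not actually proved in this paper: the authors introduce it with ``The main result of \cite{DS3} then reads as follows'' and give no argument, so there is no proof inside this document against which to compare your proposal. I can therefore only evaluate the proposal on its own terms.

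Your outline identifies the right ingredients --- covering $K$ by the stay sets $B_{\tau}(u)$, an unstable-Jacobian volume estimate, empirical measures along spanning controls, and a Shannon--McMillan--Breiman / Katok-style counting refinement for the bundle random dynamical system --- and these are indeed the standard tools for producing an invariance-entropy lower bound of the Margulis--Ruelle form. However, as written there is a concrete gap exactly where you flag ``the main obstacle.'' Your Step 1 gives, after summation, $r_{\inv}(\tau,K,Q)\geq (c/C)\inf_{u,x}J^{+}\varphi_{\tau,u}(x)$ and hence $h_{\inv}\geq\inf_{(u,x)\in\QC}\limsup_{t}\frac{1}{t}\log J^{+}\varphi_{t,u}(x)$, with \emph{no} entropy correction. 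This is the uniformly hyperbolic bound of \cite{DS1}, and in the partially hyperbolic setting it need not hold, precisely because the unstable-volume comparison $\mathrm{vol}^{+}(B_{\tau}(u))\leq C\sup_{x}J^{+}\varphi_{\tau,u}(x)^{-1}$ implicitly assumes that $B_{\tau}(u)$ sits on a single unstable leaf of bounded diameter. In the presence of a nontrivial centre--stable subbundle $E^{0-}$, the stay set spreads in centre directions, ``unstable volume of $B_{\tau}(u)$'' is not a covering-relevant quantity, and the left-hand side of your inequality can be much larger than what the unstable Jacobian alone controls. The role of the $-h_{\mu}(\varphi)$ term is exactly to quantify that centre spreading, so it must enter \emph{inside} the volume estimate, not as an a-posteriori ``redundancy discount.'' Your Katok step is where this would have to happen, but as stated it asserts the reduction by $\exp(\tau h_{\mu^{*}}(\varphi))$ rather than deriving it: you need a fibre-wise Brin--Katok/SMB estimate for the RDS $\varphi$ over $(\UC,P)$ that is uniform enough to pass to the weak$^{*}$ limit $\mu^{*}$, together with a partitioning of each $B_{\tau}(u)$ by Bowen balls in the centre--stable direction (using the subexponential estimate of (A)) to which the unstable change-of-variables can then be applied leaf by leaf. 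Without these two pieces the proposal does not yet close.

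A secondary concern: the argument that (B) guarantees the weak$^{*}$ limits $\mu^{*}$ lie in $M_{\phi_{1}}(\QC)$ and that (C) makes nearby orbits ``fiberwise identifiable'' is sensible heuristically, but you should verify that the empirical measures $\mu_{\tau}$ are eventually supported in every neighborhood of $\QC$; this is where (C) enters (as an isolation/shadowing-type hypothesis), and (B) enters in guaranteeing that the fibres $Q(u)$ do not degenerate under the limit. These details are stated, not shown, and in a complete proof they cannot be dismissed with a sentence.
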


\begin{lemma}\label{lem_fiber}
For any $u\in\UC$, the $u$-fiber of $E_{\Theta}(w)$ coincides with $\fix_{\Theta}(\mathsf{h}(u),w)$ and this set is a totally geodesic submanifold of $\F_{\Theta}$ with respect to an appropriately defined Riemannian metric (independent of $u$).%
\end{lemma}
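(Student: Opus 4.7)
I would split the statement into the two claims. For the fiber identification, the plan is to use that the all-time lift $\EC_\Theta(w)$ of the chain control set $E_\Theta(w)$ coincides with the maximal $\phi$-invariant chain transitive set, which is exactly the Morse component $\MC_\Theta(w)$ (by the correspondence quoted in Subsection \ref{subsec_cas}). Since the invariant system lifts to the trivial principal bundle $\UC \times G \to \UC$, Theorem \ref{thm_morsesets}(i) applied at the point $q = (u,1)$ yields
\begin{equation*}
  \MC_\Theta(w) \cap (\{u\} \times \F_\Theta) = \{u\} \times \fix_\Theta(\mathsf{h}(u), w),
\end{equation*}
using that $\mathsf{h}(u) = h_\phi(u,1)$ and that the action of $(u,1)$ is the identity on the $\F_\Theta$-factor. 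Unwinding the definition of the $u$-fiber as $\{x : (u,x) \in \EC_\Theta(w)\}$ then gives $E_\Theta(w)(u) = \fix_\Theta(\mathsf{h}(u), w)$.

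For the totally-geodesic claim, I would first reduce to a $u$-independent statement. By Proposition \ref{MorseStable} there is $k \in K$ with $\mathsf{h}(u) = \Ad(k)H_\phi$, so
\begin{equation*}
  \fix_\Theta(\mathsf{h}(u), w) = k \cdot \fix_\Theta(H_\phi, w).
\end{equation*}
Under any $K$-invariant Riemannian metric on $\F_\Theta$ (such as that of Proposition \ref{K-invariant}) the action of $k$ is an isometry, so it suffices to produce such a metric making the single set $\fix_\Theta(H_\phi, w) = K_\phi \cdot wb_\Theta$ totally geodesic. The resulting metric depends only on $H_\phi$ and hence works uniformly in $u$, as required.

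To settle this remaining geometric claim, I would work in the homogeneous model $\F_\Theta = K/M_\Theta$ with the reductive decomposition $\fk = \fm_\Theta \oplus \fm$ obtained from $B_\zeta|_\fk$ and the associated naturally reductive $K$-invariant metric. The orbit $K_\phi \cdot wb_\Theta$ is the image of $K_\phi/(K_\phi \cap wM_\Theta w^{-1})$, and its tangent space at $wb_\Theta$ is identified (via left-translation by $w$) with $\Ad(w^{-1})\fk_\phi \cap \fm$. I would then invoke the standard criterion for totally geodesic orbits on naturally reductive spaces, which reduces the claim to a Lie-triple closure condition on this tangent subspace. The verification proceeds by decomposing $\fk_\phi$ into the root-space pieces $\fg_\alpha$ with $\alpha(H_\phi) = 0$ (i.e., $\alpha \in \langle\Theta(\phi)\rangle$) and exploiting the root-addition rule $[\fg_\alpha, \fg_\beta] \subseteq \fg_{\alpha+\beta}$.

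The main obstacle is precisely this last bookkeeping step: one has to track how $\Ad(w^{-1})$ permutes the roots in $\langle\Theta(\phi)\rangle$, separate the tangential from the $\fm_\Theta$-directions, and check that brackets of tangential pieces stay tangential modulo $\fm_\Theta$. Once the root-theoretic setup is laid out, the closure check is mechanical; the subtlety lies entirely in the root-combinatorial accounting.
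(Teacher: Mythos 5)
Your identification of the $u$-fiber matches the paper exactly: the paper also derives $E_{\Theta}(w)(u)=\fix_{\Theta}(\mathsf{h}(u),w)$ from Theorem \ref{thm_morsesets}(i) at $q=(u,1)$, via the coincidence of the all-time lift with $\MC_{\Theta}(w)$.

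For the totally-geodesic part, your reduction to the $u$-free statement (writing $\mathsf{h}(u)=\Ad(k)H_{\phi}$ and using $K$-invariance) and the passage to the compact homogeneous model $K/K_{\Theta}$ are both the same as in the paper's Lemma \ref{lem_totgeodesic}. The divergence is in the closing step, and there is a genuine gap there. You propose to apply a ``standard criterion for totally geodesic orbits on naturally reductive spaces, which reduces the claim to a Lie-triple closure condition.'' But the Lie triple system criterion $[[\fm',\fm'],\fm']\subset\fm'$ characterizes totally geodesic submanifolds of \emph{symmetric} spaces, i.e.\ when $[\fm,\fm]\subset\fk_\Theta$; the generic flag $K/K_{\Theta}$ is only naturally reductive, not symmetric, so that criterion is not directly available, and you would at minimum have to state and justify whatever replacement you intend to use. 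You also leave the verification as ``mechanical root-combinatorial accounting'' without carrying it out, so even granting a suitable criterion, the argument is unfinished.

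The paper avoids all of this. It never checks any bracket-closure condition. Instead it proves (Step 2 of Lemma \ref{lem_totgeodesic}) the orthogonal decomposition $w\fk_{H}=(w\fk_{H}\cap\fk_{\Theta})\oplus(w\fk_{H}\cap\fk_{\Theta}^{\perp})$, which identifies $T_o(K_{wH}\cdot o)=w\fk_{H}\cap\fk_{\Theta}^{\perp}$. It then uses that, for the normal metric induced by $B_{\zeta}|_{\fk}$, geodesics of $K/K_{\Theta}$ through $o$ are exactly $t\mapsto\rme^{tX}\cdot o$ with $X\in\fk_{\Theta}^{\perp}$, and likewise geodesics of the suborbit $K_{wH}\cdot o\cong K_{wH}/(K_{wH}\cap K_{\Theta})$ (with the induced normal metric) through $o$ are $t\mapsto\rme^{tX}\cdot o$ with $X\in w\fk_{H}\cap\fk_{\Theta}^{\perp}$. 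Since $w\fk_{H}\cap\fk_{\Theta}^{\perp}\subset\fk_{\Theta}^{\perp}$, these coincide, and $K$-invariance transports the conclusion to every point of the orbit. The only root-theoretic input is the description of $\fk_{\Theta},\fk_{\Theta}^{\perp}$ by root spaces (Step 1), used to get the orthogonal decomposition; no bracket relation $[\fg_{\alpha},\fg_{\beta}]\subset\fg_{\alpha+\beta}$ is ever needed. You should replace the appeal to a Lie-triple criterion by this direct matching of one-parameter-subgroup geodesics, after first establishing the orthogonal decomposition of $w\fk_{H}$.
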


\begin{proof}
The first statement follows from Theorem \ref{thm_morsesets}(i). For the second, see Lemma \ref{lem_totgeodesic}.%
\end{proof}


\begin{proposition}
Any chain control set $E_{\Theta}(w) \subset \F_{\Theta}$ satisfies the assumptions (B) and (C) in the above theorem.%
\end{proposition}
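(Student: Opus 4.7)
My plan is to verify (B) and (C) separately, using Lemma \ref{lem_fiber} to identify the $u$-fibers as $E_{\Theta}(w)(u)=\fix_{\Theta}(\mathsf{h}(u),w)$ and exploiting the fact that the lift $\EC_{\Theta}(w)$ coincides with the Morse component $\MC_{\Theta}(w)$ of the control flow (this is the content of the bijection between chain control sets and maximal chain transitive subsets recalled in Subsection~\ref{subsec_cas}).

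For (B) I would show the stronger statement that $u\mapsto E_{\Theta}(w)(u)$ is even Hausdorff continuous. By Theorem~\ref{thm_morsesets}(i) the map $\mathsf{h}:\UC\to\Ad(G)H_{\phi}$ is continuous, and the principal $Z_{H_{\phi}}$-bundle $G\to G/Z_{H_{\phi}}\cong\Ad(G)H_{\phi}$ admits continuous local sections. Around any fixed $u\in\UC$ I can therefore pick a continuous map $v\mapsto g(v)\in G$ on a neighborhood of $u$ with $\mathsf{h}(v)=\Ad(g(v))H_{\phi}$; then $\fix_{\Theta}(\mathsf{h}(v),w)=g(v)\cdot\fix_{\Theta}(H_{\phi},w)$, and continuity of the $G$-action on $\F_{\Theta}$ yields Hausdorff convergence as $v\to u$, from which lower semicontinuity is immediate.

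For (C) I would enumerate the (finitely many, pairwise disjoint) chain control sets of $\Sigma_{\Theta}$ as $E_{\Theta}(w)=E_{\Theta}(w_1),\ldots,E_{\Theta}(w_k)$ and pick an open neighborhood $N\supset E_{\Theta}(w)$ with $\cl N\cap E_{\Theta}(w_j)=\emptyset$ for $j\geq 2$ (possible by compactness of $\F_{\Theta}$). Suppose $(u,x)\in\UC\tm\F_{\Theta}$ satisfies $\varphi(\R,x,u)\subset N$, so that the full $\phi$-orbit of $(u,x)$ lies in the compact set $\UC\tm\cl N$. Its $\alpha$- and $\omega$-limits are then nonempty, compact, connected, $\phi$-invariant subsets of $\UC\tm\cl N$, and by property~(a) of the Morse decomposition together with connectedness each is contained in a single Morse component $\MC_{\Theta}(w_j)$. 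The intersection $E_{\Theta}(w_j)\cap\cl N$ is then nonempty, forcing $j=1$, so $\alpha(u,x),\omega(u,x)\subset\MC_{\Theta}(w)$. If $(u,x)$ were not in any Morse component, property~(b) of the Morse decomposition would be violated by the one-step cycle $\MC_{\Theta}(w)\to\MC_{\Theta}(w)$; and if $(u,x)\in\MC_{\Theta}(w_j)$ with $j\geq 2$, invariance and disjointness of Morse components would force the limit sets into $\MC_{\Theta}(w_j)$, a contradiction. Hence $(u,x)\in\MC_{\Theta}(w)=\EC_{\Theta}(w)$, establishing (C).

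I expect no substantial obstacle. The only structural facts that must be cleanly invoked are the pairwise disjointness of distinct chain control sets (built into their definition as maximal sets with properties (A) and (B)) and the 1--1 correspondence between chain control sets and Morse components of $\phi$; granted these, both parts reduce to standard manipulations combining the Morse-decomposition axioms with the compactness of $\UC\tm\F_{\Theta}$.
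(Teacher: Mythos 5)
Your proposal is correct and follows the same overall skeleton as the paper's proof, with a genuinely different technical argument for (B). The paper establishes lower semicontinuity of $u\mapsto\fix_{\Theta}(\mathsf{h}(u),w)$ by a metric argument inside $K$: choosing $k_n,k\in K$ with $\mathsf{h}(u_n)=\Ad(k_n)H_{\phi}$, $\mathsf{h}(u)=\Ad(k)H_{\phi}$, noting that $\dist(k_n^{-1}k,K_{H_{\phi}})\to 0$ by compactness, and correcting $k_n$ by elements $z_n\in K_{H_{\phi}}$ so that $k_nz_n\cdot wb_{\Theta}\to x$ while remaining in $\fix_{\Theta}(\mathsf{h}(u_n),w)$. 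Your argument instead invokes local continuous sections of the principal $Z_{H_{\phi}}$-bundle $G\to G/Z_{H_{\phi}}\cong\Ad(G)H_{\phi}$ composed with the continuous map $\mathsf{h}$ to produce $v\mapsto g(v)$ with $\fix_{\Theta}(\mathsf{h}(v),w)=g(v)\cdot\fix_{\Theta}(H_{\phi},w)$, from which Hausdorff continuity (stronger than the needed lower semicontinuity) is immediate. This is cleaner and more structural; the paper's argument has the advantage of staying inside the compact group $K$ and its invariant metric, which is the setting already built up in the paper, whereas your route relies on the (correct but not previously cited in this paper) fact that Lie group quotient maps are locally trivial. For (C), both you and the paper argue from the Morse decomposition axioms and the fact that the chain control sets $E_{\Theta}(w')$ are finitely many, pairwise disjoint, and compact, so an isolating neighborhood can be chosen; your version spells out more carefully than the paper that the limit sets are connected and hence land in single Morse components, and that axiom (b) with $l=1$ excludes the case where $(u,x)$ lies outside all Morse sets while both limit sets lie in $\MC_{\Theta}(w)$. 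Both proofs are correct; yours fills in details the paper leaves implicit.
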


\begin{proof}
To verify assumption (B), let $u \in \UC$ and $x \in \fix_{\Theta}(\mathsf{h}(u),w)$. For any sequence $u_n \rightarrow u$ in $\UC$, pick $k_n,k \in K$ with $\mathsf{h}(u_n) = \Ad(k_n)H_{\phi}$, $\mathsf{h}(u) = \Ad(k)H_{\phi}$ and $x = k \cdot wb_{\Theta}$. The choice of such $k$ is possible, because $\mathsf{h}(u) = \Ad(k)H_{\phi}$ implies $\fix_{\Theta}(\mathsf{h}(u),w) = K_{\Ad(k)H_{\phi}} \cdot \Ad(k)wb_{\Theta}$, and hence $x = (k'k) \cdot wb_{\Theta}$, where $\Ad(k')$ fixes $\Ad(k)H_{\phi}$ implying $\mathsf{h}(u) = \Ad(k'k)H_{\phi}$.%

By continuity of $\mathsf{h}(\cdot)$, we have $\Ad(k_n)H_{\phi} = \mathsf{h}(u_n) \rightarrow \mathsf{h}(u) = \Ad(k)H_{\phi}$. Hence, every limit point of the sequence $(k^{-1}k_n)_{n\in\Z_+}$ is contained in $K_{H_{\phi}}$, and therefore $\dist(k^{-1}k_n,K_{H_{\phi}}) \rightarrow 0$. Since $k \mapsto k^{-1}$ is uniformly continuous on $K$, we also have $\dist(k_n^{-1}k,K_{H_{\phi}}) \rightarrow 0$. Hence, there are $z_n \in K_{H_{\phi}}$ with $d(k_n^{-1}k,z_n) \rightarrow 0$. Using the $K$-invariance of the metric on $\F_{\Theta}$, we find that $x_n := k_nz_n \cdot wb_{\Theta} \rightarrow k \cdot wb_{\Theta} = x$. Moreover, $x_n \in k_n(K_{H_{\phi}} \cdot wb_{\Theta}) = \fix_{\Theta}(\mathsf{h}(u_n),w)$, showing that $u \mapsto \fix_{\Theta}(\mathsf{h}(u),w)$ is lower semicontinuous.%

To show (C), we use that the sets $\MC_{\Theta}(w)$, $w\in\WC$, form a Morse decomposition for the control flow on $\UC \tm \F_{\Theta}$. We have $E_{\Theta}(w) = \pi_{\F_{\Theta}}(\MC_{\Theta}(w))$. Now consider some $(u,x) \in \UC \tm \F_{\Theta}$, not contained in a Morse set. Then the $\alpha$- and $\omega$-limit sets $\alpha(u,x)$ and $\omega(u,x)$ are contained in some $\MC_{\Theta}(w_1)$ and $\MC_{\Theta}(w_2)$, respectively, with $w_2 \notin \WC_{\Theta(\phi)} \backslash \WC / \WC_{\Theta}$. Hence, their projections to $\F_{\Theta}$ are contained in the corresponding (disjoint) chain control sets $E_{\Theta}(w_1)$ and $E_{\Theta}(w_2)$, respectively. Consequently, if $\varphi(\R,x,u)$ is contained in a neighborhood of $E_{\Theta}(w)$ whose closure intersects no other chain control set, then $\varphi(\R,x,u) \subset E_{\Theta}(w)$.%
\end{proof}

Now we can prove the main result of this section.%

\begin{theorem}
Assume that $\Lambda_{\Mo}^0(\MC_{\Theta}(w)) = \{0\}$. Then, for any compact set $K \subset E_{\Theta}(w)$ of positive volume%
\begin{equation}\label{eq_ie_lb}
  h_{\inv}(K,E_{\Theta}(w)) \geq \inf_{\mu \in M_{\phi_1}(\MC_{\Theta}(w))}\int \log J^+\varphi_{1,u}(x)\rmd \mu(u,x) = \inf_{(u,x) \in \MC_{\Theta}(w)} \limsup_{t \rightarrow +\infty}\frac{1}{t}\log J^+\varphi_{t,u}(x).%
\end{equation}
\end{theorem}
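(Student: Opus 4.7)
The plan is to apply Theorem~\ref{thm_ielb} with $E^+:=\EC^+_{\Theta,w}$ and $E^{0-}:=\EC^0_{\Theta,w}\oplus\EC^-_{\Theta,w}$, and then to argue that the metric entropy correction vanishes. Conditions (B) and (C) are the content of the preceding proposition. For (A), continuity, invariance, and constancy of the dimensions of the three subbundles on the connected set $\MC_\Theta(w)$ are contained in Subsection~3.3, and uniform exponential expansion on $E^+$ is precisely \eqref{eq_su_estimates}. For the sub-exponential estimate on $E^{0-}$, the hypothesis $\Lambda^0_{\Mo}(\MC_\Theta(w))=\{0\}$ together with Theorem~\ref{central}(iii) forces $\lim_{n\to+\infty}\frac{1}{n}\max_{(u,x)\in\MC_\Theta(w)}\log\|(\rmd\varphi_{n,u})_{|\EC^0_{\Theta,w}(u,x)}\|=0$; combined with the exponential contraction on $\EC^-_{\Theta,w}$ from \eqref{eq_su_estimates} and the uniform angular bounds coming from compactness of $\MC_\Theta(w)$ and continuity of the splitting, this gives the required estimate. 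Theorem~\ref{thm_ielb} then yields
\begin{equation*}
  h_{\inv}(K,E_\Theta(w)) \geq \inf_{\mu\in M_{\phi_1}(\MC_\Theta(w))} \Bigl(\int \log J^+\varphi_{1,u}(x)\,\rmd\mu(u,x) - h_\mu(\varphi)\Bigr).
\end{equation*}

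The crucial step is to prove that $h_\mu(\varphi)=0$ for every $\phi_1$-invariant $\mu$ supported on $\MC_\Theta(w)$. By Lemma~\ref{lem_fiber}, the $u$-fiber of $E_\Theta(w)$ is $Q(u)=\fix_\Theta(\mathsf{h}(u),w)$, and disintegrating $\mu$ along the projection to $\UC$, each sample measure $\mu_u$ is supported on $Q(u)$. The identity $\fix_\Theta(\mathsf{h}(\theta_1 u),w)=\varphi_{1,u}(e)\cdot\fix_\Theta(\mathsf{h}(u),w)$, together with right-invariance, shows that $\varphi_{1,u}:Q(u)\to Q(\theta_1 u)$ is a well-defined $C^1$-diffeomorphism, so the control flow restricts to a $C^1$-RDS on the compact invariant set $\MC_\Theta(w)$. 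Since $Q(u)$ is a $Z_{\mathsf{h}(u)}$-orbit, its tangent space at any $x\in Q(u)$ equals $\fn^0_{\mathsf{h}(u)}\cdot x=\EC^0_{\Theta,w}(u,x)$. Hence the Lyapunov exponents of the restricted fiber dynamics are exactly those of $(\rmd\varphi_{t,u})_x$ along $\EC^0_{\Theta,w}$, which vanish at $\mu$-a.e.\ $(u,x)$ by Theorem~\ref{centralLy} and the hypothesis. Since $\mu$ is supported on $\MC_\Theta(w)$, $h_\mu(\varphi)$ coincides with the entropy of the restricted RDS, and the Ruelle inequality applied to this restricted system then gives $h_\mu(\varphi)\leq 0$, whence $h_\mu(\varphi)=0$. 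Substituting this back into the inequality above produces the first bound.

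The final equality $\inf_\mu\int\log J^+\varphi_{1,u}(x)\,\rmd\mu=\inf_{(u,x)\in\MC_\Theta(w)}\limsup_{t\to+\infty}\frac{1}{t}\log J^+\varphi_{t,u}(x)$ is standard for the continuous additive cocycle $a_n(u,x):=\log J^+\varphi_{n,u}(x)$ on the compact invariant set $\MC_\Theta(w)$: Birkhoff's ergodic theorem equates the time and space averages for ergodic $\mu$, giving the ``$\geq$''-direction, while a Krylov--Bogolyubov argument applied to the time averages of $\delta_{\phi_n(u,x)}$ produces $\phi_1$-invariant measures whose integral is bounded above by the $\limsup$, giving the reverse inequality. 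The principal obstacle is the rigorous application of Ruelle's inequality to the restricted fiber RDS, whose ``state space'' $Q(u)$ varies with $u$; this reduces to the standard setup once one treats $\MC_\Theta(w)\to\UC$ as a continuous fiber bundle with $C^1$ fiberwise diffeomorphisms, to which the RDS version of Pesin--Ruelle applies.
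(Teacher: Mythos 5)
Your Step 1 (verifying Assumption~(A) of Theorem~\ref{thm_ielb} for the decomposition $\EC^{0-}_{\Theta,w}=\EC^-_{\Theta,w}\oplus\EC^0_{\Theta,w}$ and $\EC^+_{\Theta,w}$, using \eqref{eq_su_estimates} and Theorem~\ref{central}(iii)) and your treatment of the final equality match the paper. Where you diverge is in showing $h_\mu(\varphi)=0$: the paper does not go through Pesin--Ruelle at all. It instead bounds $h_\mu(\varphi)$ by the topological entropy $h_{\tp}(\varphi)$ of the bundle RDS on $\MC_\Theta(w)$ (via the variational principle for bundle RDS, \cite[Thm.~1.2.13]{HKa}), and then shows $h_{\tp}(\varphi)=0$ directly with a metric estimate: Lemma~\ref{lem_fiber} gives a Riemannian metric in which each fiber $\fix_\Theta(\mathsf{h}(u),w)$ is totally geodesic, the tangent space to the fiber is $\EC^0_{\Theta,w}$ by \cite[Prop.~4.2]{DS2}, and hence \eqref{eq_centralimpl} yields $d(\varphi_{t,u}(x),\varphi_{t,u}(y))\leq \rme^{\ep t}d(x,y)$ for large $t$ uniformly on $\MC_\Theta(w)$, giving $h_{\tp}(\varphi)\leq \ep\dim\fix_\Theta(\mathsf{h}(u),w)$ for every $\ep$.

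Your alternative via Ruelle's inequality for the fiber-restricted RDS is a reasonable idea and the ingredients you cite are correct: $Q(u)=\fix_\Theta(\mathsf{h}(u),w)$ by Lemma~\ref{lem_fiber}, $\varphi_{1,u}$ acts as left translation by $\varphi_{1,u}(e)$ so it maps $Q(u)$ diffeomorphically onto $Q(\theta_1u)$, the fiber tangent bundle is exactly $\EC^0_{\Theta,w}$, and the center Lyapunov exponents vanish $\mu$-a.e.\ by Theorem~\ref{centralLy} together with the hypothesis $\Lambda^0_{\Mo}(\MC_\Theta(w))=\{0\}$. However, as you yourself flag, the reduction to a rigorous Pesin--Ruelle theorem for a bundle RDS with $u$-dependent fiber manifolds is the non-trivial step, and you would also need to justify that $h_\mu(\varphi)$ as defined in the theorem (supremum over finite partitions of $\F_\Theta$) is bounded by the entropy of the fiber-restricted system before Ruelle can be invoked. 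The paper's route through topological entropy and the totally geodesic structure of the fibers sidesteps all of this: it needs only a uniform Lipschitz estimate on the fibers, which follows directly from the sub-exponential bound on $\EC^0_{\Theta,w}$, at the modest price of proving Lemma~\ref{lem_totgeodesic}. So your argument is not wrong in spirit, but it trades the paper's elementary geometric estimate for a deeper dynamical theorem whose applicability in this precise bundle setting would need to be established.
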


\begin{proof}
The theorem is proved in two steps.%

\emph{Step 1}: We apply Theorem \ref{thm_ielb}. To show that  Assumption (A) is satisfied, we put $\EC^{0-}_{\Theta,w}(u,x) := \EC^-_{\Theta,w}(u,x) \oplus \EC^0_{\Theta,w}(u,x)$, which obviously implies that%
\begin{equation*}
  T_x\F_{\Theta} = \EC^{0-}_{\Theta,w}(u,x) \oplus \EC^+_{\Theta,w}(u,x),\quad \forall (u,x) \in \MC_{\Theta}(w)%
\end{equation*}
is a continuous invariant decomposition. Observe that the dimensions of these subspaces are independent of $(u,x)$. According to Theorem \ref{central}(iii), the assumption $\Lambda_{\Mo}^0(\MC_{\Theta}(w)) = \{0\}$ implies%
\begin{equation}\label{eq_centralimpl}
  \lim_{n\rightarrow+\infty}\frac{1}{n}\max_{(u,x)\in\MC_{\Theta}(w)}\log\left\|(\rmd\varphi_{n,u})_{|\EC^0_{\Theta,w}(u,x)}\right\| = 0.%
\end{equation}
Now let $C,\lambda'>0$ be chosen so that%
\begin{equation*}
  |(\rmd\varphi_{t,u})_xv| \geq C\rme^{\lambda't}|v| \mbox{\quad for all\ } v \in \EC^+_{\Theta,w}(u,x)%
\end{equation*}
and fix some $\lambda \in (0,\lambda')$. Then, for any $\ep>0$ choose $T>0$ large enough so that%
\begin{equation*}
  \rme^{(\lambda'-\lambda)t} \geq C^{-1} \mbox{\quad and \quad} |(\rmd\varphi_{t,u})_xv| \leq \rme^{\ep t}|v|,\quad \forall v \in \EC^{0-}_{\Theta,w}(u,x) \mbox{\ and\ } t \geq T.%
\end{equation*}
The existence of such $T$ immediately follows from \eqref{eq_centralimpl} and $\lambda < \lambda'$. Hence, (A) holds and we have%
\begin{equation*}
  h_{\inv}(K,E_{\Theta}(w)) \geq \inf_{\mu \in M_{\phi_1}(\QC)}\left(\int_{\QC}\log J^+\varphi_{1,u}(x)\rmd\mu(u,x) - h_{\mu}(\varphi)\right).%
\end{equation*}

\emph{Step 2}: We prove that $h_{\mu}(\varphi) = 0$ for all $\mu$. To this end, observe that $h_{\mu}(\varphi)$ is bounded from above by the topological entropy $h_{\tp}(\varphi)$ of the corresponding bundle RDS on $\MC_{\Theta}(w)$, following from the variational principle for bundle RDS (cf.~\cite[Thm.~1.2.13]{HKa}). We show that $h_{\tp}(\varphi) = 0$ for each fixed invariant measure $P$ on the base space $\UC$. Consider two points $x,y$ on the same fiber $\fix_{\Theta}(\mathsf{h}(u),w)$, $u\in\UC$. Since $\fix_{\Theta}(\mathsf{h}(u),w)$ is totally geodesic by Lemma \ref{lem_fiber}, we can take a shortest geodesic $\gamma:[0,1] \rightarrow \fix_{\Theta}(\mathsf{h}(u),w)$ from $x$ to $y$. Then for each $t > 0$ we have%
\begin{align*}
  d(\varphi_{t,u}(x),\varphi_{t,u}(y)) &\leq \length(\varphi_{t,u} \circ \gamma) = \int_0^1 |(\rmd\varphi_{t,u})_{\gamma(s)}\dot{\gamma}(s)| \rmd s \\
	 &\leq \sup_{z \in \fix_{\Theta}(\mathsf{h}(u),w)\atop v\in T_z\fix_{\Theta}(\mathsf{h}(u),w),\ |v| = 1} |(\rmd\varphi_{t,u})_zv| \cdot \length(\gamma).%
\end{align*}
Now observe that $\length(\gamma) = d(x,y)$ and $T_z\fix_{\Theta}(\mathsf{h}(u),w) = \EC^0_{\Theta,w}(u,z)$ by \cite[Prop.~4.2]{DS2}. By (A) we can thus choose $t$ large enough (independently of $u,z$ and $v$) so that $|(\rmd\varphi_{t,u})_zv| \leq \rme^{\ep t}|v|$. Hence,%
\begin{equation*}
  d(\varphi_{t,u}(x),\varphi_{t,u}(y)) \leq \rme^{\ep t} d(x,y).%
\end{equation*}
By standard methods, one shows that this implies $h_{\tp}(\varphi) \leq \ep \cdot \dim \fix_{\Theta}(\mathsf{h}(u),w)$, and since $\ep>0$ was chosen arbitrarily, $h_{\tp}(\varphi) = 0$ follows. The equality in \eqref{eq_ie_lb} follows from the general theory of continuous additive cocycles, see, e.g., \cite{SMLS}.%
\end{proof}

\appendix
	
\section{Appendix}\label{sec_ap}
	
\subsection{Regular sequences}

In this section, we show how we can obtain the asymptotic ray of a given sequence by means of its Cartan decomposition. Although such a result was used in \cite{ASM}, we could not find its proof in the literature.%

Let $G$ be a noncompact semisimple Lie group with finite center and consider the left coset symmetric space $K\backslash G$. Let $d$ be the $G$-invariant distance in $K \backslash G$, which is uniquely determined by%
\begin{equation*}
  d(o \cdot \exp X,o) = |X|\quad \forall X \in \fs,%
\end{equation*}
where $o = K \cdot e$ is the origin of $K \backslash G$ and $|\cdot|$ is the $K$-invariant norm induced by $B_{\zeta}$.%

Following \cite{Kai}, a sequence $(g_n)$ in $G$ is called \emph{regular} if there exists $D \in \fs$ such that $d(o \cdot g_n,o \cdot \exp nD)$ has sublinear growth as $n \rightarrow +\infty$, i.e.,%
\begin{equation*}
  \lim_{n \rightarrow +\infty}\frac{1}{n}d(o \cdot g_n,o \cdot \exp nD) = 0.%
\end{equation*}
If such $D$ exists, it is unique and called the \emph{asymptotic ray} of $(g_n)$.%

The next result yields an expression for the asymptotic ray of a regular sequence in terms of its polar decomposition.%
	
\begin{lemma}\label{asymptotic}
If $(g_n)$ is a regular sequence, then the asymptotic ray of $(g_n)$ is given by 
\begin{equation*}
  D = \lim_{n\rightarrow+\infty}\frac{1}{n}\log \rmS(g_n).%
\end{equation*}
\end{lemma}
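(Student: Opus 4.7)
The plan is to convert the sublinear estimate $d(o \cdot g_n, o \cdot \exp nD) = o(n)$, measured in the symmetric-space metric, into the desired estimate $|X_n - nD| = o(n)$, where $X_n := \log \rmS(g_n) \in \fs$. The link between the two will be provided by the fact that the Riemannian exponential map at the origin of $K \backslash G$ is distance non-decreasing.

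First I would write the Cartan decomposition $g_n = k_n \exp(X_n)$ with $k_n \in K$ and $X_n = \log \rmS(g_n) \in \fs$, so that $K g_n = K \exp(X_n)$ and therefore $o \cdot g_n = o \cdot \exp X_n =: \exp_o(X_n)$. The defining identity $d(o, \exp_o X) = |X|$ then reads as $\exp_o$ being a radial isometry from $(\fs,|\cdot|)$ into $(K\backslash G,d)$.

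Next I would invoke the global geometry of the noncompact-type symmetric space $K \backslash G$: since $G$ has finite center and $K$ is a maximal compact subgroup, the polar map $K \tm \fs \rightarrow G$, $(k,X) \mapsto k\exp X$, is a diffeomorphism, so $K\backslash G$ is diffeomorphic to the vector space $\fs$, hence simply connected, while the $G$-invariant metric has nonpositive sectional curvature. By Cartan--Hadamard, $K\backslash G$ is a Hadamard manifold, in particular a CAT(0) space. Applying the CAT(0) triangle comparison to the triangle with vertices $o,\exp_o X,\exp_o Y$ (whose side lengths at $o$ are $|X|$ and $|Y|$) yields
\begin{equation*}
  d(\exp_o X,\exp_o Y) \;\geq\; |X - Y|, \quad \forall X,Y \in \fs,
\end{equation*}
i.e., the exponential map at $o$ is \emph{distance non-decreasing} (equivalently, nonpositive sectional curvature forces geodesics emanating from a common point to spread at least linearly, as in the Euclidean model).

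Applying this inequality with $X = X_n$ and $Y = nD$ and dividing by $n$ gives
\begin{equation*}
  \left|\tfrac{1}{n} X_n - D\right| \;\leq\; \tfrac{1}{n}\, d(\exp_o X_n,\exp_o nD) \;=\; \tfrac{1}{n}\, d(o \cdot g_n, o \cdot \exp nD) \;\longrightarrow\; 0
\end{equation*}
by the regularity hypothesis, which establishes $\tfrac{1}{n}\log \rmS(g_n) = \tfrac{1}{n}X_n \to D$. The only genuinely nontrivial ingredient is the CAT(0) / Cartan--Hadamard input; once that is in place, the rest is bookkeeping, and I expect this to be the only delicate step of the argument.
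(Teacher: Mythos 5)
Your proof is correct and takes a genuinely different, and more elementary, route than the paper. You work directly with the Cartan--Hadamard geometry of the noncompact-type symmetric space $K\backslash G$: its exponential map at the origin is a distance non-decreasing diffeomorphism onto $K\backslash G$, so the sublinear estimate $d(o\cdot g_n, o\cdot\exp nD) = o(n)$ transfers immediately to $|X_n - nD| = o(n)$, with $X_n = \log\rmS(g_n)$. (Your appeal to CAT(0) is valid, though the particular inequality you need is most directly the CAT(0) law-of-cosines inequality $c^2 \geq a^2 + b^2 - 2ab\cos\gamma$ applied at the apex $o$, or equivalently Rauch comparison for $\exp_o$.) The paper instead routes the argument through Kaimanovich's multiplicative ergodic machinery: it passes to the linear cocycle $\psi_n = \Ad(g_n)$, invokes the regularity of $(\psi_n)$ to produce $\Psi = \lim (\psi_n^*\psi_n)^{1/2n}$, identifies $\Psi$ with $\Ad(\rme^D)$ where $D = \lim\frac1n\log\rmS(g_n)$ via the polar decomposition of $\Ad(g_n)$, and then shows $d(o\cdot g_n, o\cdot\rme^{nD}) \leq \log\|\psi_n\Psi^{-n}\| = o(n)$ and concludes by uniqueness of the asymptotic ray. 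The paper's route fits the MET framework used throughout the article and produces the limit $D$ explicitly as the logarithm of a singular-value matrix of $\Ad(g_n)$, which is then reused elsewhere; your route is shorter, bypasses Kaimanovich entirely, and makes the geometric content of the lemma (that the Cartan projection linearizes the sublinear-drift condition) transparent.
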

		
\begin{proof}
By the uniqueness of the asymptotic ray, we only have to show that $\frac{1}{n}d(o\cdot g_n,o\cdot \rme^{n D})\rightarrow 0$. If $\psi_n:=\Ad(g_n)$, \cite[Thm.~2.3 and Thm.~4.1]{Kai} imply that $(\psi_n)$ is also a regular sequence such that%
\begin{equation*}
  \Psi := \lim_{n\rightarrow+\infty}(\psi_n^*\psi_n)^{1/2n}\;\;\mbox{ exists and }\;\; \lim_{n\rightarrow+\infty}\frac{1}{n}\log\|\psi_n\Psi^{-n}\| = 0.%
\end{equation*}
On the other hand, by the $K$-invariance of the inner product, $\Ad(k)^* = \Ad(k)^{-1}$. Hence, if $g_n = k_n\rmS(g_n)$ is the Cartan decomposition of $g_n$, we obtain $\psi_n^*\psi_n = \Ad(\rmS(g_n)^2)$, implying%
\begin{equation*}
  (\psi_n^*\psi_n)^{1/2n} = \Ad(\rmS(g_n))^{1/n} = \Ad\left(\exp\left(\frac{1}{n}\log\rmS(g_n)\right)\right),%
\end{equation*}
where for the last equality we use that $\Ad(\rmS(g_n))$ admits a unique $n$th root, since it is a positive definite self-adjoint
linear map (see \cite[Thm.~7.2.6]{HJo}). Moreover, the fact that $\Ad \circ \exp$ is a homeomorphism when restricted to $\fs$ yields that%
\begin{equation*}
  D = \lim_{n \rightarrow +\infty}\frac{1}{n}\rmS(g_n) \mbox{\quad exists and satisfies\ } \Psi = \Ad(\rme^D).%
\end{equation*}
Hence,
\begin{align*}
  d(o \cdot g_n, o \cdot \rme^{nD}) &= d(o \cdot g_n\rme^{-nD},o) = \left|\log A^+\left(g_n\rme^{-nD}\right)\right| \leq \left\|\ad\left(\log A^+\left(g_n\rme^{-nD}\right)\right)\right\|\\
			&= \log\left\|\Ad\left(A^+\left(g_n\rme^{-nD}\right)\right)\right\| = \log\left\|\Ad\left(g_n\rme^{-nD}\right)\right\| = \log\left\|\psi_n\Psi^{-n}\right\|,%
\end{align*}
where for the inequality we used that the positive roots generate $\fa^*$. Consequently,%
\begin{equation*}
  \lim_{n\rightarrow+\infty}\frac{1}{n}d(o \cdot g_n, o\rme^{nD}) \leq \lim_{n\rightarrow+\infty}\frac{1}{n}\log\|\psi_n\Psi^{-n}\| = 0,%
\end{equation*}
concluding the proof.
\end{proof}

\subsection{Totally geodesics submanifolds}

Let $H\in\cl(\fa^+)$ and consider its action on $\F_{\Theta}$. Here we show that for a suitable $K$-invariant Riemannian metric, the sets $\fix_{\Theta}(H,w)$, $w\in\WC,$ are totally geodesic submanifolds in the sense that any two points in $\fix_{\Theta}(H,w)$ can be joined by a geodesic of $G/P_{\Theta}$ whose image lies in $\fix_{\Theta}(H,w)$.%

For a given $\Theta\subset\Sigma$, let us consider the homogeneous space $K/K_{\Theta}$ and the map $\pi:G/P_{\Theta}\rightarrow K/K_{\Theta}$ given by $\pi(g\cdot b_{\Theta}) := \kappa(g)\cdot o$, where $o=e\cdot K_{\Theta}$ and $\kappa:G\rightarrow K$ is the map that assigns to any $g\in G$ its $K$-component in the Iwasawa decomposition. It is not hard to see that $\pi$ is well-defined, commutes with the action of $K$ and has an inverse given by $k\cdot o\in K/K_{\Theta}\mapsto k\cdot b_{\Theta}\in G/P_{\Theta}$. Moreover, since both maps are quotient maps and $\kappa$ is differentiable (see \cite[Thm.~6.46]{Kna}), $\pi$ is a diffeomorphism.%

\begin{lemma}\label{lem_totgeodesic}
There is a $K$-invariant metric in $G/P_{\Theta}$ such that the sets $\fix_{\Theta}(H,w)$, $w\in\WC$, are totally geodesic submanifolds. 
\end{lemma}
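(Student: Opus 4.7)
The plan is to push everything from $G/P_\Theta$ to the compact quotient $K/K_\Theta$ via the $K$-equivariant diffeomorphism $\pi$ introduced just before the lemma, and to equip $K/K_\Theta$ with the normal Riemannian metric induced by the bi-invariant metric on $K$ coming from $B_\zeta|_{\fk\tm\fk}$. Since $\pi$ is $K$-equivariant, pulling such a metric back yields a $K$-invariant metric on $G/P_\Theta$, and $\pi$ sends the fixed-point component $\fix_\Theta(H,w) = K_H \cdot wb_\Theta$ to the $K_H$-orbit $K_H w \cdot o \subset K/K_\Theta$. Thus the lemma reduces to showing that each such orbit is totally geodesic in $K/K_\Theta$ for this normal metric.

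The next step is a simplification: left translation $L_{w^{-1}}$ is an isometry that carries $K_Hw\cdot o$ onto $(w^{-1}K_H w)\cdot o = K_{\Ad(w^{-1})H}\cdot o$. So it suffices to prove that for every $H'\in\fa$ the orbit $K_{H'}\cdot o$ is totally geodesic. Under the normal metric, the geodesics through $o$ are precisely the curves $t\mapsto \exp(tX)\cdot o$ with $X$ lying in the $B_\zeta$-orthogonal complement $\fm$ of $\fk_\Theta$ in $\fk$, so it remains to identify the tangent space $T_o(K_{H'}\cdot o)$ and check that it is generated by such geodesics.

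The crux is an orthogonality argument. Using the $B_\zeta$-orthogonal root decomposition $\fk = \fz \oplus \bigoplus_{\alpha\in\Pi^+}\fk_\alpha$, where $\fz$ is the centralizer of $\fa$ in $\fk$ and $\fk_\alpha := \fk \cap (\fg_\alpha \oplus \fg_{-\alpha})$, one observes that both $\fk_\Theta$ and $\fk_{H'}$ are direct sums of $\fz$ together with certain $\fk_\alpha$ (those with $\alpha\in \Pi^+ \cap \langle\Theta\rangle$, respectively those with $\alpha(H')=0$). Intersecting these descriptions yields the orthogonal decomposition $\fk_{H'} = (\fk_{H'}\cap\fk_\Theta) \oplus (\fk_{H'}\cap\fm)$, so the tangent space to the orbit at $o$ equals $\fk_{H'}\cap\fm$. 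For any $X$ in this subspace one has $\exp(tX)\in K_{H'}$, hence the normal-metric geodesic $t\mapsto \exp(tX)\cdot o$ never leaves the orbit; transporting this along the orbit by the transitive isometric action of $K_{H'}$ gives the same conclusion at every point and finishes the proof.

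The main obstacle I expect is the orthogonal-splitting argument in the third paragraph: it relies on the fact that the root-space pieces $\fk_\alpha$ are mutually orthogonal under $B_\zeta$ and that $\fk_H = \fk_{\Theta(H)}$ is built from them in exactly the same way as $\fk_\Theta$. Once this orthogonality is justified, the geodesic criterion at $o$ and its propagation to every point of the orbit by homogeneity are routine, and the total-geodesic property transfers back to $\F_\Theta$ automatically since $\pi$ was chosen to be an isometry by construction of the metric.
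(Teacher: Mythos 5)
Your proposal is essentially the same as the paper's proof, with one clean simplification. Both proofs push the problem to $K/K_{\Theta}$ via the $K$-equivariant diffeomorphism $\pi$, equip it with the normal metric induced by $(B_{\zeta})|_{\fk\tm\fk}$, and then use the orthogonality of the root pieces $\fk_{\alpha} = \fk\cap(\fg_{\alpha}\oplus\fg_{-\alpha})$ to split the tangent space of a fixed-point component and identify its geodesics as ambient geodesics. Where the paper keeps $w$ around and proves a general orthogonal-splitting identity for $w\fk_{\Theta_2}$ (its Step 2), you first apply the isometry $L_{w^{-1}}$ to replace $K_Hw\cdot o$ by $K_{\Ad(w^{-1})H}\cdot o$, which reduces the orthogonality step to the untwisted case $\fk_{H'} = (\fk_{H'}\cap\fk_{\Theta})\oplus(\fk_{H'}\cap\fk_{\Theta}^{\perp})$. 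That buys you a slightly shorter verification at no cost, since $w\in\WC$ always has a representative in $M^{*}\subset K$ and $w^{-1}K_Hw = K_{\Ad(w^{-1})H}$, and the structure of $\fk_{H'}$ as $\fz$ plus a sum of $\fk_{\alpha}$'s does not require $H'\in\cl\fa^+$. Two small points to tighten: your symbol $\fm$ denotes the orthogonal complement of $\fk_{\Theta}$ while the paper reserves $\fm$ for the centralizer of $\fa$ in $\fk$ (your $\fz$), so rename to avoid a clash; and since the lemma asks for the ``any two points joined by an ambient geodesic'' form of totally geodesic, you should add the remark (as the paper does) that $K_{H'}\cdot o$ is compact, hence geodesically complete, so Hopf--Rinow upgrades your local tangency criterion to the required global connectedness.
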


\begin{proof}
By the previous discussion, the pullback of any $K$-invariant Riemannian metric on $K/K_{\Theta}$ by $\pi$ is a $K$-invariant metric on $G/P_{\Theta}$. Since for such a metric $\pi$ is an isometry and%
\begin{equation*}
  \pi(\fix_{\Theta}(H,w)) = \pi(K_H\cdot wb_{\Theta}) = \pi(w(K_{w^{-1}H})\cdot b_{\Theta}) = wK_{w^{-1}H}\cdot o,%
\end{equation*}
it is enough to show that there is a $K$-invariant metric on $K/K_{\Theta}$ such that $K_{wH}\cdot o$, $w\in\WC$, is totally geodesic, which we will do in 3 steps.%

\emph{Step 1}: We prove that for any $\Theta\subset \Sigma$ it holds that%
\begin{equation*}
  \fk_{\Theta}=\fm\oplus\sum_{\alpha\in\langle\Theta\rangle}(\fg_{\alpha}+\fg_{-\alpha})\cap\fk\;\;\;\;\mbox{ and }\;\;\;\;\fk_{\Theta}^{\perp}=\sum_{\alpha\in\Pi\setminus\langle\Theta\rangle}(\fg_{\alpha}+\fg_{-\alpha})\cap\fk,%
\end{equation*}
where the inner product considered here is the restriction of $B_{\zeta}$ to $\fk$.%

If we consider $H_{\Theta}\in\cl(\fa^+)$ such that $\Theta = \Theta(H_{\Theta})$, we have $\fk_{\Theta} = \{X\in\fk: \;[H_{\Theta}, X]=0\}$. Therefore, the inclusion $\fk_{\Theta}\supset\fm\oplus\sum_{\alpha\in\langle\Theta\rangle}(\fg_{\alpha}+\fg_{-\alpha})\cap\fk$ certainly holds, since the bracket of any element in the right-hand side with $H_{\Theta}$ is zero. On the other hand, any $X\in\fk_{\Theta}$ can be written as $X=\sum_{\alpha\in\Pi\cup\{0\}}X_{\alpha}$, and therefore%
\begin{equation*}
  0 = [H_{\Theta},X] = \sum_{\alpha\in\Pi\setminus\langle\Theta\rangle}\alpha(H)X_{\alpha}.%
\end{equation*}
Since $\alpha(H)\neq0$, we get $X_{\alpha}=0$ for any $\alpha\in\Pi\setminus\langle\Theta(H)\rangle$, implying the first equality.%
	
The second equality follows from the fact that the vector subspaces of $\fg$ given by%
\begin{equation*}
  \fm\oplus\sum_{\alpha\in\langle\Theta(H)\rangle}(\fg_{\alpha}+\fg_{-\alpha})\;\;\;\;\mbox{ and }\;\;\;\;\sum_{\alpha\in\Pi\setminus\langle\Theta(H)\rangle}(\fg_{\alpha}+\fg_{-\alpha})%
\end{equation*}
are orthogonal w.r.t.~the inner product $B_{\zeta}$ and their direct sum contains $\fk$.%
	
\emph{Step 2}: We prove that for any $\Theta_1, \Theta_2\subset\Sigma$ and $w\in\WC$ it holds that%
\begin{equation*}
  w\fk_{\Theta_2}=(w\fk_{\Theta_2}\cap \fk_{\Theta_1})\oplus(w\fk_{\Theta_2}\cap\fk_{\Theta_1}^{\perp}).%
\end{equation*}
In particular, the orthogonal complement in $w\fk_{\Theta_2}$ of $w\fk_{\Theta_2}\cap\fk_{\Theta_1}$ w.r.t.~$B_{\zeta}|_{w\fk_{\Theta_2}\times w\fk_{\Theta_2}}$ is $w\fk_{\Theta_2}\cap\fk_{\Theta_1}^{\perp}$.%
	
Since $w\Pi=\Pi$, $w\fm=\fm$, $w\fk=\fk$ and $w\fg_{\alpha}=\fg_{w\alpha}$, by Step 1 we obtain%
\begin{align*}
&  w\fk_{\Theta_2}=w\fm \oplus\sum_{\alpha\in \langle\Theta_2\rangle}(w\fg_{\alpha}+w\fg_{-\alpha})\cap w\fk=
	\fm \oplus\sum_{\alpha\in w\langle\Theta_2\rangle}(\fg_{\alpha}+\fg_{-\alpha})\cap\fk\\
	&=\left(\fm \oplus\sum_{\alpha\in w\langle\Theta_2\rangle\cap \langle\Theta_1\rangle}(\fg_{\alpha}+\fg_{-\alpha})\cap\fk\right)+\left(\sum_{\alpha\in w\langle\Theta_2\rangle\cap\Pi\setminus\langle\Theta_1\rangle}(\fg_{\alpha}+\fg_{-\alpha})\cap\fk\right)\subset (w\fk_{\Theta_2}\cap \fk_{\Theta_1})\oplus (w\fk_{\Theta_2}\cap\fk_{\Theta_1}^{\perp}).%
\end{align*}
Since the reverse inequality always holds, we are done.%
	
\emph{Step 3}: We prove that there is a $K$-invariant Riemannian metric on $K/K_{\Theta}$ such that $K_{wH}\cdot o$ is totally geodesic for any $w\in\WC$.%
	
The inner product $(B_{\zeta})_{|\fk\tm\fk}$ is $K$-invariant, since the Cartan-Killing form is invariant by automorphisms, and hence, it induces on $K/K_{\Theta}$ a $K$-invariant metric in the following way: Since $T_o (K/K_1)\simeq\fk_{\Theta}^{\perp}$, we define%
\begin{equation*}
  \langle v, w\rangle_x:=\langle(d[k])^{-1}_xv, (d[k])^{-1}_xw\rangle, \;\;\mbox{ where }\;\;x=k\cdot o.%
\end{equation*}
It is a well-known fact that for such a Riemannian metric, the geodesics starting at the origin are given by $\gamma(t)=\rme^{tX}\cdot o$ with $X\in \fk_{\Theta}^{\perp}$ (see \cite[Prop.~25]{Onell}). Moreover, since $K/K_{\Theta}$ is compact, it is geodesically complete and therefore, the geodesic connecting the origin to any given point $x\in K/K_{\Theta}$ is by uniqueness of the form $\gamma(t)=\rme^{tX}\cdot o$ for some $X\in\fk_{\Theta}^{\perp}$. Since the metric is $K$-invariant, we obtain that any given points $x_1=k_1\cdot o$ and $x_2=k_2\cdot o$ in $K/K_{\Theta}$ can be joined by a geodesic $\alpha(t)=k_1\gamma(t)$, where $\gamma(t)=\rme^{tX}\cdot o$ is a geodesic joining the origin and $k_1^{-1}k_2\cdot o$.%
	
On the other hand, by Step 2 it holds that $T_o(K_{wH}\cdot o)=w\fk_H\cap\fk_{\Theta}^{\perp}$ and consequently $K_{wH}\cdot o$ is isometric to the homogeneous space $K_{wH}\big/(K_{wH}\cap K_{\Theta})$, where the Riemannian metric of the homogeneous space is the $K_{wH}$-invariant Riemannian metric induced by $(B_{\zeta})_{|w\fk_H\times w\fk_H}$. As above, any two points in $K_{wH}\cdot o$ can be joined by a geodesic of the form $\gamma(t)=k\rme^{tX}\cdot o$ for some $k\in K_{wH}$ and $X\in w\fk_H\cap\fk_{\Theta}^{\perp}$, and since $w\fk_{H}\cap \fk_{\Theta}^{\perp}\subset\fk_{\Theta}^{\perp}$, such geodesics are also geodesics of $K/K_{\Theta}$, showing that $K_{wH}\cdot o$ is totally geodesic as stated.
\end{proof}

\subsection{The multiplicative ergodic theorem}\label{subsec_met}

This section is devoted to the presentation of the multiplicative ergodic theorem (MET), also known as Oseledets theorem. For more details, the reader should consult \cite[Ch.~3]{LAr} or \cite[Ch.~11]{CK2}.%

A metric dynamical system $(\Omega,\mathcal{F},\nu,(\theta_n)_{n\in\T})$ is given by a probability space $(\Omega,\mathcal{F},\nu)$ and a measurable (semi-) flow $(\theta_n)_{n\in\T}$ for which $\mu$ is an invariant measure, where ($\T=\Z_+$) $\T=\Z$.%

Let $E$ be a $d$-dimensional Euclidean vector space. For any given metric dynamical system $(\Omega,\mathcal{F},\nu,(\theta_n)_{n\in\T})$ and any random map $A:\Omega\rightarrow\Gl(E)$, we can define a linear cocycle $\psi$ on $E$ by $\psi(0,\omega) := \id_E$, $\psi(n,\omega) := A(\theta_{n-1}\omega)\cdots A(\omega)$ if $n>0$ and, in case $\T=\Z$, $\psi(n,\omega) := A^{-1}(\theta_{n}\omega)\cdots A^{-1}(\theta_{-1}\omega)$ for all $n<0$.%

The proof of the next result can be found in \cite[Thm.~3.4.2]{LAr}.%

\begin{theorem}\label{MET}
Let $\psi$ be a linear cocycle on the vector space $E$ over the metric dynamical system $(\Omega,\mathcal{F},\mu,(\theta_n)_{n\in\T})$ with generator $A:\Omega\rightarrow \Gl(E)$. If%
\begin{equation*}
  \log^+\|A\| \in L^1(\Omega,\mathcal{F},\mu) \;\;\mbox{ and }\;\;\log^+\|A^{-1}\| \in L^1(\Omega,\mathcal{F},\mu),%
\end{equation*}
then there exists an invariant set $\tilde{\Omega}\in\mathcal{F}$ such that for each $\omega\in\tilde{\Omega}$ the following statements hold:%
\begin{itemize}
		\item[(A)] The case $\T=\Z_+$: 
		\subitem(i) The limit $\lim_{n\rightarrow+\infty}(\psi(n, \omega)^* \psi(n, \omega))^{1/2n} =: \Psi(\omega)\geq 0$ exists;
		\subitem(ii) Let $\rme^{\lambda_{p(\omega)}(\omega)}<\ldots<\rme^{\lambda_1(\omega)}$ be the different eigenvalues of $\Psi(\omega)$ and let $U_{p(\omega)}(\omega), \ldots, U_1(\omega)$ be their corresponding eigenspaces with multiplicities $d_i(\omega)=\dim U_i(\omega)$. Then%
$$p(\theta\omega)=p(\omega),$$
		$$\lambda_i(\theta\omega)=\lambda_i(\omega) \;\;\mbox{ for all }\;\;i\in\{1, \ldots, p(\omega)\},$$
		$$d_i(\theta\omega)=d_i(\omega)\;\;\mbox{ for all }\;\;i\in\{1, \ldots, p(\omega)\}.$$
		\subitem(iii) Put $V_{p(\omega)+1}(\omega)=\{0\}$ and for $i=1,\ldots,p(\omega)$, 
		$$V_i(\omega) := U_{p(\omega)}(\omega)\oplus\cdots\oplus U_i(\omega),\;\;\; $$
		so that%
		$$V_{p(\omega)}(\omega) \subset\cdots \subset V_i(\omega)\subset\cdots\subset V_1(\omega)=E$$
		defines a filtration of $E$. Then for each $v\in E\setminus\{0\}$ the Lyapunov exponent
		$$\lambda(\omega,v) := \lim_{n\rightarrow+\infty}\frac{1}{n}\log|\psi(n,\omega)v|$$
		exists as a limit and%
		\begin{equation}\label{eq_lyapexp_subspace}
		  \lambda(\omega,v) = \lambda_i(\omega)\iff v\in V_i(\omega)\setminus V_{i+1}(\omega),%
		\end{equation}
		or equivalently
		$$V_i(\omega)=\{v\in V: \;\lambda(\omega, v)\leq\lambda_i(\omega)\}.$$
		\subitem(iv) For all $v\in E\setminus\{0\}$
		$$\lambda(\theta\omega, A(\omega)v)=\lambda(\omega, v),$$
		whence%
		$$A(\omega)V_i(\omega) = V_i(\theta\omega) \;\;\mbox{ for all }\;\;i\in\{1,\ldots,p(\omega)\}.$$
		\subitem(v) The function $\omega\mapsto p(\omega)\in\{1,\ldots,d\}$ (measurably extended from $\tilde{\Omega}$ to $\Omega$) is measurable. The functions $\omega\mapsto \lambda_i(\omega)\in\R$, $\omega\mapsto d_i(\omega)\in\{1, \ldots, d\}$, $\omega\mapsto U_i(\omega)\in \cup_{k=1}^d\mathrm{G}_k(d)$ and $\omega\mapsto V_i(\omega)\in \cup_{k=1}^d\mathrm{G}_k(d)$, $\mathrm{G}_k(d)$ the Grassmannian manifold of the $k$-dimensional subspaces of $E$ (measurably extended to $\{\omega: p(\omega)\geq i\}\in\mathcal{B}$) are measurable. Moreover, if $(\Omega,\mathcal{F},\mu,(\theta_n)_{n\in\T})$ is ergodic, then the functions $p(\cdot)$, $\lambda_i(\cdot)$ and $d_i(\cdot)$ are constant.
		\item[(B)] The case $\T=\Z$: There exists a splitting 
		$$E=E_1(\omega)\oplus \cdots \oplus E_{p(\omega)}(\omega)$$
		of $E$ into random subspaces $E_i(\omega)$ (called Oseledets spaces) depending measurably on $\omega$ with dimensions $\dim E_i(\omega)=d_i(\omega)$, satisfying:
		\subitem(i) If $P_i(\omega): E\rightarrow E_i(\omega)$ denotes the projection onto $E_i(\omega)$ along $F_i(\omega):=\sum_{j\neq i}E_j(\omega)$, then
		$$A(\omega)P_i(\omega)=P_i(\theta\omega)A(\omega)$$
		or equivalently 
		$$A(\omega)E_i(\omega)=E_i(\theta\omega).$$
		\subitem(ii) We have
		$$\lim_{n\rightarrow\pm\infty}\frac{1}{n}\log\|\psi(n, \omega)v\|=\lambda_i(v)\iff v\in E_i(\omega)\setminus\{0\}.$$
		\subitem(iii) Convergence in (ii) is uniform with respect to $v\in E_i(\omega)\cap S^{d-1}$ for each fixed $\omega$. 
		\subitem(iv) The filtration in (A) can be recovered as%
		$$V_i(\omega)=\oplus_{j=i}^{p(\omega)}E_j(\omega) \;\mbox{ for }i\in\{1, \ldots, p(\omega)\}.$$		
\end{itemize}
\end{theorem}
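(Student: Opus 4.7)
My plan is to prove the MET in two stages, first establishing the one-sided statement (A) and then bootstrapping to the two-sided statement (B); since this is a classical result with proofs spanning chapters of standard references, I will outline the strategy rather than fill in all the details.

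For the one-sided case, the backbone will be Kingman's subadditive ergodic theorem applied to the exterior powers of the cocycle. For each $k \in \{1,\ldots,d\}$, the sequence $n \mapsto \log \|\Lambda^k \psi(n,\omega)\|$ is subadditive in $n$ via $\|\Lambda^k(AB)\| \leq \|\Lambda^k A\|\cdot\|\Lambda^k B\|$, and integrability of $\log^+\|A\|$ supplies the hypothesis of Kingman's theorem. This yields $\theta$-invariant measurable limits $\Lambda_k(\omega) := \lim_n \tfrac{1}{n}\log\|\Lambda^k\psi(n,\omega)\|$, from which the distinct Lyapunov exponents $\lambda_1(\omega) > \cdots > \lambda_{p(\omega)}(\omega)$ and multiplicities $d_i(\omega)$ are extracted via the increments $\Lambda_k - \Lambda_{k-1}$. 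Next, I would show that $\Psi_n(\omega) := (\psi(n,\omega)^*\psi(n,\omega))^{1/(2n)}$ converges almost surely to a positive definite operator $\Psi(\omega)$, by induction on $d$ following Raghunathan. The eigenvalues of $\Psi_n(\omega)$ converge to $\rme^{\lambda_i(\omega)}$ by the previous step applied to exterior powers; the subtle part is that, using the strict gaps $\lambda_i > \lambda_{i+1}$ together with perturbation estimates for spectral projectors, one shows the eigenspaces also stabilize. I will then define $V_i(\omega)$ as the sum of eigenspaces of $\Psi(\omega)$ for eigenvalues at most $\rme^{\lambda_i(\omega)}$; the exponent characterization \eqref{eq_lyapexp_subspace} then follows by decomposing $v\in E$ along these eigenspaces and tracking the dominant growth rate. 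Invariance under $A(\omega)$ comes from the cocycle identity applied to $\Psi(\theta\omega)$, and measurability together with ergodic constancy is standard.

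For the two-sided case, I would apply the one-sided result to both the forward cocycle and the backward cocycle $n\mapsto \psi(-n,\omega)^{-1}$, producing forward and reversed-backward filtrations $V_i^+(\omega)$ and $V_i^-(\omega)$, and then set $E_i(\omega) := V_i^+(\omega) \cap V_i^-(\omega)$ with indexing arranged so that the forward exponent along $E_i$ is $\lambda_i$ and the backward exponent is $-\lambda_i$. The key assertion to verify is that $\dim E_i = d_i$ and that $E = \bigoplus_i E_i$; this reduces to transversality of the forward and backward filtrations at the appropriate indices, which I would establish via angle estimates between $V_i^+$ and a complement in $V_{i+1}^-$ combined with a Borel--Cantelli argument along the $\theta$-orbit of $\omega$. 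Uniform convergence of $\tfrac{1}{n}\log\|\psi(n,\omega)v\|$ on $E_i(\omega) \cap S^{d-1}$ will then follow from the same angle estimates.

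The hardest part is never the existence of the Lyapunov exponents themselves -- that is essentially immediate from Kingman's theorem -- but rather the passage from eigenvalue convergence to eigenspace convergence, both in producing the limit $\Psi(\omega)$ in stage one and in the two-sided transversality in stage two. Both arguments use the strict spectral gaps in a delicate way, and propagating the perturbation estimates uniformly along almost every $\theta$-orbit is the true engine of the proof.
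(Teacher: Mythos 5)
This theorem is not proved in the paper; it is the classical Oseledets multiplicative ergodic theorem, stated verbatim and cited to Arnold's book (the paper says only ``The proof of the next result can be found in \cite[Thm.~3.4.2]{LAr}''). There is therefore no ``paper's own proof'' to compare your outline against.

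That said, your sketch does follow the same general route as the proof in the cited reference: Kingman's subadditive ergodic theorem applied to exterior powers to produce the Lyapunov exponents, Raghunathan's inductive argument for the convergence of $(\psi(n,\omega)^*\psi(n,\omega))^{1/2n}$, the filtration $V_i(\omega)$ built from spectral subspaces of the limit, and the two-sided Oseledets splitting obtained by intersecting the forward and backward filtrations with a transversality argument. This is a legitimate strategy and your self-assessment of where the real difficulty lies (eigenspace convergence and uniform angle control along $\theta$-orbits) is accurate. The main caveats are (a) the outline omits the measurable-selection arguments needed to make $U_i(\omega)$, $V_i(\omega)$, $E_i(\omega)$ genuinely measurable (item (v) of part (A)), and (b) your description of the backward cocycle as $n \mapsto \psi(-n,\omega)^{-1}$ should be checked against the cocycle convention in force: for the standard generator $A$, the inverse cocycle is generated by $A^{-1}\circ\theta^{-1}$ over the inverse base flow, and the indexing of the backward filtration has to be reversed so that $E_i(\omega) = V_i^+(\omega)\cap V_i^-(\omega)$ picks out the single exponent $\lambda_i$. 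These are technical but not conceptual gaps; as an outline of a textbook theorem that the paper itself does not reprove, the proposal is sound.
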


\begin{remark}
Although for $\omega\in\tilde{\Omega}$ the invariant splitting%
\begin{equation*}
  E = E_1(\omega)\oplus\cdots\oplus E_{p(\omega)}(\omega)%
\end{equation*}
is not in general orthogonal (and the splitting $E = U_1(\omega)\oplus\cdots\oplus U_{p(\omega)}(\omega)$ not in general invariant), for any fixed $\kappa>0$ there is a random inner product $\langle\cdot,\cdot\rangle_{\kappa,\omega}$ satisfying (see \cite[Thm.~4.3.6]{LAr}):%
\begin{itemize}
\item[1.] $\langle\cdot,\cdot\rangle_{\kappa,\omega}$ depends measurably on $\omega$ and each $P_i(\omega)$ is an orthogonal projection;%
\item[2.] For any $\varepsilon>0$ there exists a random variable $B_{\varepsilon}:\Omega\rightarrow [1,+\infty)$ such that%
\begin{equation*}
  B_{\varepsilon}(\omega)^{-1}\|\cdot\|\leq \|\cdot\|_{\kappa,\omega}\leq B_{\varepsilon}(\omega)\|\cdot\|\;\;\;\mbox{ with }\;\;\;\rme^{-\varepsilon|n|} B_{\varepsilon}(\omega)\leq B_{\varepsilon}(\theta_n\omega)\leq \rme^{\varepsilon|n|}B_{\varepsilon}(\omega) \;\mbox{ for all }\;n\in\Z.%
\end{equation*}
\item[3.] For all $i \in \{1,\ldots,p(\omega)\}$, $v\in E_i(\omega)$ and $n\in\Z$ it holds that%
\begin{equation*}
  \rme^{n\lambda_i(\omega) - \kappa|n|}\|v\|_{\kappa,\omega} \leq \|\psi(n,\omega)v\|_{\kappa,\theta_n\omega} \leq \rme^{n\lambda_i(\omega) + \kappa|n|}\|v\|_{\kappa,\omega}.%
\end{equation*}
\end{itemize}
\end{remark}

We have the following result:%

\begin{lemma}\label{liminf}
Let $\omega\in\tilde{\Omega}$ and consider an isomorphism $T:E\rightarrow E$ such that $TV_j(\omega)=V_j(\omega)$ for all $j \in \{1,\ldots,p(\omega)\}$. If $V$ is a $\Psi(\omega)$-invariant proper subspace of $E$, then the following statements hold:%
\begin{itemize}
\item[(i)] For any nonzero vector $u\in V^{\perp}\cap (V_i(\omega)\setminus V_{i+1}(\omega))$ we have%
\begin{equation*}
  \lim_{n\rightarrow+\infty}\frac{1}{n}\log\inf_{v\in V}\left|\psi(n,\omega)T(u-v)\right| = \lambda_i(\omega).%
\end{equation*}
\item[(ii)] Let $W \subset E$ be a $\Psi(\omega)$-invariant subspace such that $V^{\perp} \cap W \neq \{0\}$. Let $i^* \in \{1,\ldots,p(\omega)\}$ be the greatest index such that $U_{i^*}(\omega)\cap (V^{\perp} \cap W) \neq \{0\}$. Then, for any $\varepsilon>0$ and any positive real numbers $c_2>c_1>0$ there exists $C = C(\varepsilon,c_1,c_2)>0$ such that%
\begin{equation*}
  \inf_{u\in R, v\in V}\left|\psi(n,\omega) T(u-v)\right|>C\rme^{n(\lambda_{i^*}(\omega)-\varepsilon)} \;\;\mbox{ for all }n > 0,%
\end{equation*}
where $R = \{ u\in V^{\perp} \cap W: c_1\leq |u|\leq c_2\}$.%
\end{itemize}
\end{lemma}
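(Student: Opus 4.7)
The plan is to handle (i) and (ii) in parallel, via two ingredients: an algebraic analysis of $T$ in the eigenspace decomposition of $\Psi(\omega)$, and Lyapunov growth estimates from the MET applied to a cocycle induced on a suitable quotient; for unified notation I set $i^* := i$ in part (i). The first step is to exploit that $\Psi(\omega)$ is positive self-adjoint, so its eigenspaces $U_j(\omega)$ are mutually orthogonal and every $\Psi(\omega)$-invariant subspace $S \subset E$ splits as $S = \bigoplus_j (S \cap U_j(\omega))$; I would apply this to $V$, $V^{\perp}$, and (in (ii)) to $W$. Since $TV_j(\omega) = V_j(\omega)$ and $V_j(\omega) = \bigoplus_{k \geq j} U_k(\omega)$, expressing $T$ in block form with respect to $E = \bigoplus_j U_j(\omega)$ yields $T_{\ell k} = 0$ whenever $\ell < k$ (block lower triangular), and each diagonal block $T_{kk}\colon U_k(\omega) \to U_k(\omega)$ is an isomorphism (as the map induced by $T$ on $V_k(\omega)/V_{k+1}(\omega)$). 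The upper bound in (i) is then immediate: taking $v = 0$, one has $Tu \in TV_i(\omega) = V_i(\omega)$, so $\inf_{v \in V}|\psi(n,\omega)T(u-v)| \leq \|\psi(n,\omega)_{|V_i(\omega)}\|\cdot|Tu|$, and $\lim_n \frac{1}{n}\log\|\psi(n,\omega)_{|V_i(\omega)}\| = \lambda_i(\omega)$, which follows from the MET, yields the $\limsup$ inequality.

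The crux is a non-degeneracy claim: if $q\colon E \to E/V_{i^*+1}(\omega)$ denotes the canonical projection, then $q(T(u-v)) \neq 0$ for every admissible $u$ and every $v \in V$. Write $u = \sum_j u_j$ and $v = \sum_j v_j$ with $u_j \in V^{\perp} \cap U_j(\omega)$ (additionally $u_j \in W$ in (ii)) and $v_j \in V \cap U_j(\omega)$. The condition $T(u-v) \in V_{i^*+1}(\omega)$ amounts to $(T(u-v))_k = 0$ for all $k \leq i^*$. The block lower-triangular form of $T$ lets one solve these equations iteratively: at $k = 1$, $T_{11}(u_1 - v_1) = 0$ forces $u_1 = v_1 \in V^{\perp} \cap V = \{0\}$; inductively, $u_k = v_k = 0$ for all $k \leq i^*$. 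In (ii), the maximality of $i^*$ forces $u_j = 0$ for $j > i^*$ as well, whence $u = 0$, contradicting $|u| \geq c_1 > 0$. In (i), the assumption $u \in V_i(\omega)$ gives $u_j = 0$ for $j < i$, which combined with the iteration yields $v_1 = \ldots = v_{i-1} = 0$ and finally $u_i = v_i = 0$, contradicting $u_i \neq 0$.

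To upgrade this to a quantitative lower bound, observe that $\psi(n,\omega)$ descends through $q$ to an induced cocycle $\psi^{(i^*)}(n,\omega)$ on $E/V_{i^*+1}(\omega)$; its smallest Lyapunov exponent equals $\lambda_{i^*}(\omega)$, which I expect to yield $\lim_n \frac{1}{n}\log m(\psi^{(i^*)}(n,\omega)) = \lambda_{i^*}(\omega)$, hence for each $\varepsilon > 0$ there is $C' > 0$ with $m(\psi^{(i^*)}(n,\omega)) \geq C' \rme^{n(\lambda_{i^*}(\omega) - \varepsilon)}$ for all $n > 0$. Since $q$ is norm non-increasing, $|\psi(n,\omega)T(u-v)| \geq m(\psi^{(i^*)}(n,\omega))\cdot|q(T(u-v))|_{\mathrm{quot}}$. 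In (i), $\delta(u) := \inf_{v \in V}|q(T(u-v))|_{\mathrm{quot}} = \mathrm{dist}(q(Tu),\,q(TV))$ is strictly positive by the previous paragraph, giving $\liminf_n \frac{1}{n}\log\inf_{v \in V}|\psi(n,\omega)T(u-v)| \geq \lambda_i(\omega) - \varepsilon$; letting $\varepsilon \downarrow 0$ completes the equality. In (ii), continuity of $u \mapsto \mathrm{dist}(q(Tu),\,q(TV))$ on the compact set $R$ produces a uniform $\delta > 0$, and the desired inequality follows with $C := C'\delta$.

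The main obstacle I foresee is justifying the uniform conorm estimate $\lim_n \frac{1}{n}\log m(\psi^{(i^*)}(n,\omega)) = \lambda_{i^*}(\omega)$ for the induced quotient cocycle: the one-sided MET in Theorem \ref{MET}(A) gives only the filtration, so the cleanest route is to invoke the two-sided Oseledets splitting from Theorem \ref{MET}(B) (available here, since both $\log^+\|A\|$ and $\log^+\|A^{-1}\|$ are integrable in the present setting) together with the adapted norms, which yield the requisite uniformity on the finite-dimensional quotient.
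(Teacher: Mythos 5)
Your plan is correct and, modulo organization, hits exactly the same mathematical points as the paper's proof. Both arguments split into (a) a non-degeneracy claim for the component of $T(u-v)$ complementary to $V_{i^*+1}(\omega)$, and (b) a quantitative Oseledets-type lower bound on how $\psi(n,\omega)$ acts on that component. What differs is the packaging. For (a), the paper argues in one line that $T(u-v)\in V_{i+1}(\omega)$ forces $u-v\in T^{-1}V_{i+1}(\omega)=V_{i+1}(\omega)$ and then uses the orthogonal decompositions $V=\bigoplus_j(V\cap U_j)$, $V^\perp=\bigoplus_j(V^\perp\cap U_j)$ to conclude $u\in V_{i+1}(\omega)$; your iterative argument via the block lower-triangular form of $T$ is equivalent but more explicit, and the triangular structure is a nice observation that the paper does not spell out. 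For (b), the paper works directly with the random inner products $\langle\cdot,\cdot\rangle_{\kappa,\omega}$ (which make the Oseledets splitting orthogonal) and the projection $Q^{i^*}(\omega)=\sum_{j\le i^*}P_j(\omega)$, getting the explicit bound $|\psi(n,\omega)w|_{\kappa,\theta_n\omega}\ge\rme^{n\lambda_{i^*}-\kappa|n|}|Q^{i^*}(\omega)w|_{\kappa,\omega}$ and then passing back to the fixed norm via $B_\varepsilon$. You instead formalize this as a quotient cocycle on $E/V_{i^*+1}(\omega)$ and ask for a conorm estimate; as you yourself note, making that rigorous comes down to exactly the same adapted-norm machinery from the two-sided MET, plus some bookkeeping because the quotient spaces $E/V_{i^*+1}(\theta_n\omega)$ move with $n$. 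So the quotient formulation buys conceptual clarity but not a shortcut. Both approaches finish (ii) with compactness of $R$ to extract a uniform constant, exactly as you propose. In short: the route is the same at its core, and your version is correct provided the conorm estimate for the induced quotient cocycle is carried out via adapted norms as in the paper's direct computation.
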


\begin{proof}
(i) The facts that $\inf_{v\in V}\left|\psi(n,\omega) T(u-v)\right|\leq \left|\psi(n,\omega) Tu\right|$ and $TV_j(\omega)=V_j(\omega)$ for all $j\in \{1, \ldots, p(\omega)\}$ together with \eqref{eq_lyapexp_subspace} imply%
\begin{equation}\label{eq_limsup_ue}
  \limsup_{n\rightarrow+\infty}\frac{1}{n}\log\inf_{v\in V}\left|\psi(n,\omega)T(u-v)\right| \leq \lambda_i.%
\end{equation}
On the other hand, let $\kappa>0$ and consider the random inner product $\langle\cdot,\cdot\rangle_{\kappa,\omega}$ as above. By the invariance of the decomposition, we have%
\begin{align*}
  |\psi(n,\omega)T(u-v)|_{\kappa, \theta_n\omega}^2 &= \left|\sum_{j=1}^{p(\omega)}\psi(n,\omega)P_j(\omega)T(u-v)\right|_{\kappa,\theta_n\omega}^2 = \sum_{j=1}^{p(\omega)}\Bigl|\psi(n, \omega)P_j(\omega)T(u-v)\Bigr|_{\kappa,\theta_n\omega}^2\\
	&\geq \sum_{j=1}^{i}\Bigl|\psi(n, \omega)P_j(\omega)T(u-v)\Bigr|_{\kappa,\theta_n\omega}^2 \geq \sum_{j=1}^{i}\rme^{2(n\lambda_j-\kappa|n|)}\bigl|P_j(\omega)T(u-v)\bigr|_{\kappa,\omega}^2\\
	&\geq \rme^{2(n\lambda_i-\kappa|n|)}\bigl|Q^i(\omega)T(u-v)\bigr|_{\kappa,\omega}^2,%
\end{align*}
where $Q^i(\omega)=\sum_{j=1}^iP_j(\omega)$. Let $\varepsilon>0$ and consider the random variable $B_{\varepsilon}:\Omega\rightarrow [1, +\infty)$ as in the above remark. Then%
\begin{align*}
  |\psi(n,\omega)T(u-v)| &\geq |\psi(n,\omega)T(u-v)|_{\kappa,\theta_n\omega} B_{\varepsilon}(\theta_n\omega)^{-1}\geq \rme^{n\lambda_i-\kappa|n|}\bigl|Q^i(\omega)T(u-v)\bigr|_{\kappa,\omega} B_{\varepsilon}(\theta_n\omega)^{-1}\\
	&\geq \rme^{n\lambda_i-\kappa|n|}\bigl|Q^i(\omega)T(u-v)\bigr| B_{\varepsilon}(\theta_n\omega)^{-1} B_{\varepsilon}(\omega)^{-1}\\
	&\geq \rme^{n\lambda_i-\kappa|n|}\bigl|Q^i(\omega)T(u-v)\bigr|\rme^{-\varepsilon|n|}B_{\varepsilon}(\omega)^{-2},%
\end{align*}
and so%
\begin{equation*}
  \inf_{v\in V}|\psi(n, \omega)T(u-v)|\geq\rme^{n\lambda_i-(\kappa+\varepsilon)|n|}\inf_{v\in V}\bigl|Q^i(\omega)T(u-v)\bigr|B_{\varepsilon}(\omega)^{-2}.%
\end{equation*}

\emph{Claim}: $\inf_{v\in V}\bigl|Q^i(\omega)T(u-v)\bigr|\neq 0$.

By the equivalence of the norms, it is enough to show that $\inf_{v\in V}\bigl|Q^i(\omega)T(u-v)\bigr|_{\kappa,\omega}\neq 0$. It is easy to see that the infimum is indeed a minimum. However, if for some $v\in V$ we have $\bigl|Q^i(\omega)T(u-v)\bigr|_{\kappa,\omega}=0$, then%
\begin{equation*}
  T(u-v) \in \ker Q^i(\omega) = \bigoplus_{j=i+1}^{p(\omega)}E_j(\omega) = V_{i+1}(\omega)\implies u-v\in T^{-1}V_{i+1}(\omega)=V_{i+1}(\omega).%
\end{equation*}
Given the fact that $\Psi(\omega)$ is a self-adjoint map and $\Psi(\omega)V = V$, we have%
\begin{equation}\label{selfadjoint}
  V = \bigoplus_{j=1}^{p(\omega)} \left(V\cap U_j(\omega)\right) \;\;\;\;\mbox{ and }\;\;\;\; V^{\perp}=\bigoplus_{j=1}^{p(\omega)} \left(V^{\perp}\cap U_j(\omega)\right).%
\end{equation}
Moreover, since $u\in V^{\perp}$ and $v\in V$, decomposition (\ref{selfadjoint}) implies that $u\in V_{i+1}(\omega)$ if $u-v\in V_{i+1}(\omega)$, contradicting our hypothesis. Therefore, for any $v\in V$ it holds that $|Q(\omega)T(u-v)|_{\kappa,\omega}>0$, proving the claim.%

From the above claim we obtain%
\begin{equation*}
  \liminf_{n\rightarrow+\infty}\frac{1}{n}\inf_{w\in W}|\psi(n,\omega)T(v-w)| \geq \lambda_i-(\kappa+\varepsilon),%
\end{equation*}
and since $\kappa,\varepsilon>0$ are arbitrary, together with \eqref{eq_limsup_ue}, statement (i) follows.%
	
(ii) By decomposition \eqref{selfadjoint} and the $\Psi(\omega)$-invariance of $W$, any $u\in V^{\perp} \cap W$ satisfies $u\in V_i(\omega)\setminus V_{i+1}(\omega)$ for some $i\leq i^*$. Therefore, for any given $u\in V^{\perp} \cap W$ we can show exactly as in the proof of item (i) that%
\begin{equation*}
  \inf_{v\in V}|\psi(n,\omega)T(u-v)| \geq \rme^{n\lambda_{i^*}-\varepsilon|n|}\inf_{v\in V}\bigl|Q^{i^*}(\omega)T(u-v)\bigr|B_{\varepsilon}(\omega)^{-2}.%
\end{equation*}
The result is proven if we can show that%
\begin{equation*}
  C = C(\varepsilon,c_1,c_2) := \inf_{u\in V^{\perp} \cap W; \,c_1\leq |u|\leq c_2}\left\{\inf_{v\in V}\bigl|Q^{i^*}(\omega)T(u-v)\bigr|B_{\varepsilon}(\omega)^{-2}\right\} > 0.%
\end{equation*}
To see this, observe that if $P$ denotes the orthogonal projection onto $Q^{i^*}(\omega)TV$, then%
\begin{equation*}
  \inf_{v\in V}\bigl|Q^{i^*}(\omega)T(u-v)\bigr| = |(I-P)Q^{i^*}(\omega)Tu|%
\end{equation*}
and, as proved in item (i), this number is positive for any $u$. Then compactness of $R$ implies the existence of $u_*\in R$ such that
\begin{equation*}
  C = \inf_{v\in V}\bigl|Q^{i^*}(\omega)T(u_*-v)\bigr|B_{\varepsilon}(\omega)^{-2} > 0,%
\end{equation*}
concluding the proof.	
\end{proof}

\section*{Acknowledgements}

The authors express their gratitude to Lino Grama for his help with the proof of Lemma \ref{lem_totgeodesic}.%

\end{document}